\documentclass[a4paper,11pt]{amsart}

\usepackage{amsmath}
\usepackage{amssymb}
\usepackage{amsthm}
\usepackage[abbrev,alphabetic]{amsrefs}
\usepackage{amscd}
\usepackage[dvipdfmx]{graphicx}

\usepackage[all,cmtip]{xy}

\usepackage{color}
\usepackage{xypic}

\title[Dual complex of log Fano pairs and its application]
{Dual complex of log Fano pairs and its application to Witt vector cohomology}

\author{Yusuke Nakamura}

\address{Graduate School of Mathematical Sciences, 
the University of Tokyo, 3-8-1 Komaba, Meguro-ku, Tokyo 153-8914, Japan.}

\email{nakamura@ms.u-tokyo.ac.jp}

\newtheorem{thm}{Theorem}[section]
\newtheorem{lem}[thm]{Lemma}

\newtheorem{prop}[thm]{Proposition}

\newtheorem{claim}[thm]{Claim}

\theoremstyle{definition}
\newtheorem{defi}[thm]{Definition}
\newtheorem{ex}[thm]{Example}

\theoremstyle{remark}
\newtheorem{rmk}[thm]{Remark}
\newtheorem*{ackn}{Acknowledgements}

\begin{document}
\begin{abstract}
We prove the contractibility of the dual complexes of weak log Fano pairs. 
As applications, we obtain a vanishing theorem of Witt vector cohomology of Ambro-Fujino type and 
a rational point formula in dimension three. 
\end{abstract}

\maketitle

\tableofcontents

\section{Introduction}
In the first half of this paper, we discuss the dual complex of Fano pairs. 
A dual complex is a combinatorial object which expresses how the components of $\Delta ^{=1}$ intersect for a dlt pair $(X, \Delta)$. 
In \cite{dFKX17}, they study the dual complex of a dlt modification of a pair $(X, \Delta)$, 
and show that the dual complex is independent of the choice of a dlt modification (minimal dlt blow-up) up to PL homeomorphism. 
In \cite{KX16} and \cite{Mau}, the dual complexes of log canonical dlt pairs $(X, \Delta)$ with $K_X + \Delta \sim _{\mathbb{Q}} 0$ are studied. 

Our main theorem is the contractibility of the dual complexes of weak Fano pairs (see \cite[22]{KX16} for a similar result). 

\begin{thm}[{$=$ Theorem \ref{thm:nefbig0}}]\label{thm:main1}
Let $(X, \Delta)$ be a projective pair over an algebraic closed field of characteristic zero. 
Assume that $-(K_X + \Delta)$ is nef and big. 
Then for any dlt blow-up $g: (Y, \Delta _Y) \to (X, \Delta)$, 
the dual complex $\mathcal{D}(\Delta _Y ^{\ge 1})$ is contractible, 
where we define $\Delta _Y$ by $K_Y + \Delta _{Y} = g^* (K_X + \Delta)$.
\end{thm}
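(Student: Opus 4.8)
The plan is to reduce, via the crepant-birational invariance of the dual complex \cite{dFKX17}, to a log Fano pair with ample anticanonical class, and then to prove contractibility by induction on $\dim X$, the geometric input being vanishing theorems together with the fact that log canonical centres of log Fano pairs are of log Fano type.

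\emph{Reductions.} By \cite{dFKX17} the PL-homeomorphism type of $\mathcal{D}(\Delta_Y^{\ge 1})$ depends only on the crepant-birational equivalence class of $(X,\Delta)$, so we may freely pass to crepant models. Since $-(K_Y+\Delta_Y)=g^{*}(-(K_X+\Delta))$ is nef and big and $(Y,\Delta_Y)$ is dlt, the base-point-free theorem in characteristic zero shows $-m(K_Y+\Delta_Y)$ is semiample for $m\gg 0$; the induced birational contraction $h\colon Y\to Z$ contracts only $(K_Y+\Delta_Y)$-trivial curves, so $(Z,\Delta_Z:=h_{*}\Delta_Y)$ is log canonical, crepant-birational to $(X,\Delta)$, with $-(K_Z+\Delta_Z)$ \emph{ample}. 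Thus we may assume from the outset that $-(K_X+\Delta)$ is ample.

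\emph{Rational acyclicity.} As $-(K_Y+\Delta_Y)$ is nef and big and $(Y,\Delta_Y)$ is dlt, Kawamata--Viehweg vanishing gives $H^{i}(Y,\mathcal{O}_Y)=0$, and Ambro--Fujino vanishing for log canonical pairs gives $H^{i}(Y,\mathcal{O}_Y(-\lfloor\Delta_Y\rfloor))=0$, for $i>0$; hence the ideal-sheaf sequence of $\lfloor\Delta_Y\rfloor$ yields $H^{i}(\lfloor\Delta_Y\rfloor,\mathcal{O})=0$ for $i>0$ and $\lfloor\Delta_Y\rfloor$ connected. Each stratum $W$ of $\lfloor\Delta_Y\rfloor$ carries, by adjunction, a dlt structure with anticanonical class $g^{*}(-(K_X+\Delta))|_{W}$, and $W$ maps onto a log canonical centre of $(X,\Delta)$; such centres are of log Fano type, hence rationally connected, and one deduces $H^{i}(W,\mathcal{O}_W)=0$ for $i>0$ on every stratum. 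The Mayer--Vietoris spectral sequence $E_1^{p,q}=\bigoplus_{\operatorname{codim}W=p}H^{q}(W,\mathcal{O}_W)\Rightarrow H^{p+q}(\lfloor\Delta_Y\rfloor,\mathcal{O})$ then degenerates onto its bottom row, which is the simplicial cochain complex of $\mathcal{D}(\Delta_Y^{\ge 1})$; therefore $\widetilde{H}^{\bullet}(\mathcal{D}(\Delta_Y^{\ge 1}),\mathbb{Q})=0$. This rational acyclicity is already what is needed for the Witt-vector cohomology application.

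\emph{Contractibility, and the main obstacle.} To upgrade to contractibility I would induct on $\dim X$. Those components $S_i$ of $\lfloor\Delta_Y\rfloor$ that dominate a component of $\lfloor\Delta\rfloor$ are, by adjunction, again weak log Fano pairs (anticanonical class nef and big), so the induction hypothesis makes their dual complexes --- that is, the links $\mathrm{lk}([S_i])$ --- contractible; the remaining $g$-exceptional components, and the higher-codimensional strata contracted by $g$, must be handled by a Stein-factorisation analysis relating them to dlt models of the log-Fano-type centres they lie over. Combining contractibility of all vertex links with connectedness (and, if necessary, rational acyclicity), a collapsing argument in the spirit of the simple-homotopy results of \cite{dFKX17} shows that $\mathcal{D}(\Delta_Y^{\ge 1})$ is simply connected, whence contractible. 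I expect this last step to be the crux: in contrast to the Calabi--Yau case of \cite{KX16}, where the dual complex is merely a pseudomanifold, here one must genuinely exhibit a collapse --- or otherwise establish simple connectivity together with integral acyclicity --- and keeping track of the strata that $g$ contracts, where the induction hypothesis does not directly apply, is the delicate technical point.
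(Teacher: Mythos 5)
Your reduction to the ample case and your rational-acyclicity argument via vanishing theorems are plausible (this is essentially the cohomological strategy of \cite{KX16}), but they only yield $\widetilde{H}^{\bullet}(\mathcal{D}(\Delta_Y^{\ge 1}),\mathbb{Q})=0$, which is strictly weaker than the theorem. The actual contractibility step --- which you yourself flag as the crux --- is not proved, and the route you sketch does not work as stated. Contractibility of all vertex links plus connectedness does not imply simple connectivity (any triangulated surface has connected links, and even ``locally cone-like'' complexes such as triangulations of $S^2$ or $\mathbb{RP}^2$ are not simply connected), and ``a collapsing argument in the spirit of \cite{dFKX17}'' is not something you can invoke abstractly: the collapses in \cite{dFKX17} are produced by specific MMP steps that are positive against a boundary component, and you have not exhibited such an MMP. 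Moreover your induction does not reach the $g$-exceptional strata, precisely where the cone/collapse structure has to come from. (As a side remark, rational acyclicity would also not suffice for the paper's application: the tree-structure argument for the non-klt locus uses simple connectivity of the dual complex, not just $H_1(\cdot,\mathbb{Q})=0$.)

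The missing idea is the following global MMP argument (Theorem \ref{thm:scc2} in the paper). After reducing to the ample case, one adds auxiliary divisors $A,B$ to produce $\Omega_Y=\Delta_Y+A+B$ with $K_Y+\Omega_Y\sim_{\mathbb{R}}0$, $(Y,\Omega_Y^{\wedge 1})$ dlt, and $\mathrm{Supp}\,\Omega_Y^{>1}=\mathrm{Supp}\,\Omega_Y^{\ge 1}=\mathrm{Supp}\,\Delta_Y^{\ge 1}$. Then $K_Y+\Omega_Y^{\wedge 1}\sim_{\mathbb{R}}-(\Omega_Y-\Omega_Y^{\wedge 1})$ is anti-effective with support exactly the locus carrying the dual complex, so every extremal ray of the $(K_Y+\Omega_Y^{\wedge 1})$-MMP is positive against some component of $\Omega_Y^{=1}$; hence \cite[Proposition 19]{dFKX17} gives a genuine collapse of the dual complex at every step, and the MMP terminates with a Mori fiber space $f\colon X_\ell\to Z$ on which some component $D_0$ of $\Omega_\ell^{=1}$ dominates $Z$ and is $f$-ample. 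The connectedness lemma then shows every stratum of the remaining components meets $D_0$ in a nonempty connected set, so the dual complex of the end product is a cone with vertex $D_0$, hence contractible; pulling back through the collapses gives contractibility upstream. Finally, the statement for an \emph{arbitrary} dlt blow-up requires the independence result (Proposition \ref{prop:indep}, via weak factorization), which your proposal uses implicitly but should be cited explicitly since it is where characteristic zero enters.
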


\noindent
In this theorem, the coefficients of $\Delta$ might be larger than one contrary to the setting in \cite{dFKX17}. 
We prove that the dual complex is independent of the choice of a dlt blow-up 
(which is possibly not minimal) up to homotopy equivalence in our setting 
(Proposition \ref{prop:indep}, see also Remark \ref{rmk:independence}). 
The proof of Proposition \ref{prop:indep} depends on the weak factorization theorem \cite{AKMW02} and does not work 
in positive characteristic even in dimension three. 
Hence, we get the following weaker theorem in positive characteristic. 

\begin{thm}[{$=$ Theorem \ref{thm:nefbig}}]\label{thm:main1'}
Let $(X, \Delta)$ be a three dimensional projective pair over an algebraic closed field of characteristic larger than five. 
Assume that $-(K_X + \Delta)$ is nef and big. 
Then, there exists a dlt blow-up $g: (Y, \Delta _Y) \to (X, \Delta)$ such that 
the dual complex $\mathcal{D}(\Delta _Y ^{\ge 1})$ is contractible, 
where we define $\Delta _Y$ by $K_Y + \Delta _{Y} = g^* (K_X + \Delta)$.
\end{thm}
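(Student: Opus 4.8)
The strategy is to run through the proof of Theorem~\ref{thm:main1}, replacing each use of the characteristic-zero minimal model program by its three-dimensional counterpart, which is available over an algebraically closed field of characteristic $p>5$ by the work of Hacon--Xu, Birkar, Cascini--Tanaka--Xu, Birkar--Waldron and others. The one step that has no substitute in positive characteristic is the appeal to the weak factorization theorem \cite{AKMW02} in Proposition~\ref{prop:indep}: in characteristic zero that step upgrades ``one good dlt blow-up'' to ``every dlt blow-up'', whereas here we can only produce a single good model --- which is exactly the content of the present theorem.

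Concretely, I would first fix a dlt blow-up $g\colon(Y,\Delta_Y)\to(X,\Delta)$; its existence for threefolds with $p>5$ follows from the three-dimensional minimal model program together with a log resolution, and one may arrange that $(Y,\Delta_Y^{\le 1})$ is $\mathbb{Q}$-factorial dlt with the components of $\Delta_Y^{>1}$ simple and $g$-exceptional. Then $-(K_Y+\Delta_Y)=-g^*(K_X+\Delta)$ is nef and big, hence semiample by the base-point-free theorem for threefolds in characteristic $p>5$; adjoining $\tfrac{1}{m}D$ to $\Delta_Y$, where $D$ is a general member of $\lvert{-m(K_Y+\Delta_Y)}\rvert$ for $m\gg 0$, produces a $\mathbb{Q}$-factorial dlt log Calabi--Yau pair $(Y,\Gamma)$ --- the $g$-exceptional components $\Delta_Y^{>1}$ being harmless and treated separately --- with $\Gamma^{\ge 1}=\Delta_Y^{\ge 1}$, and hence with the same dual complex, whose coefficient-$<1$ part now contains a \emph{big} divisor. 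I would then analyse $(Y,\Gamma)$ by running the minimal model program that decreases the coefficients of $\Gamma^{\ge 1}$, tracking the effect on the dual complex by the collapsing and homotopy-equivalence statements of \cite{dFKX17}, which are combinatorial and characteristic-free once the MMP steps are known to exist. The bigness built into $\Gamma$ is what rules out the cyclic and maximal-intersection configurations that account for the non-contractible dual complexes in the log Calabi--Yau classification of \cite{KX16}, leaving a model whose dual complex is a point or collapses to one; unwinding the homotopy equivalences then yields the contractibility of $\mathcal{D}(\Delta_Y^{\ge 1})$.

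The hard part is the positive-characteristic input at the last two stages. One needs, for threefolds over an algebraically closed field of characteristic $p>5$: (i)~existence of a dlt blow-up, semiampleness of nef and big adjoint divisors, existence of flips, and termination of the minimal model program above; and (ii)~the part of the structure theory of dual complexes of dlt log Calabi--Yau pairs of \cite{KX16} that is used to exclude the non-contractible cases. It is precisely the three-dimensionality, together with $p>5$, that makes both available: for (ii) one re-derives the instance that is needed by running the threefold minimal model program rather than by Hodge theory, the bigness hypothesis playing the role that a vanishing theorem would play in characteristic zero. Granting these inputs, everything else --- the reduction to the log Calabi--Yau case, the bookkeeping of dual complexes under birational operations, and the combinatorial conclusion --- goes through unchanged.
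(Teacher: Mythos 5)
Your overall shape --- pass to a dlt model, complete $\Delta_Y$ to a log Calabi--Yau pair, run an MMP and track the dual complex via the collapsing results of \cite{dFKX17} --- is the same as the paper's, but two of your key steps have genuine gaps. First, the log Calabi--Yau pair you build by adding $\tfrac{1}{m}D$ for a general member $D$ of a (semi)ample system does \emph{not} have the property the argument actually needs, namely $\mathrm{Supp}\,\Gamma^{>1}=\mathrm{Supp}\,\Gamma^{\ge 1}$: a general $D$ misses the components of $\Delta_Y^{\ge 1}$, so any component of $\Delta_Y$ with coefficient exactly $1$ stays at coefficient $1$. This condition is the engine of Theorem \ref{thm:scc2}: one runs the $(K+\Gamma^{\wedge 1})$-MMP, which is a $(-(\Gamma-\Gamma^{\wedge 1}))$-MMP, and the fact that $\Gamma-\Gamma^{\wedge 1}$ is supported on \emph{all} of $\Gamma^{\ge 1}$ forces every extremal ray to be positive against some component of $(\Gamma^{\wedge 1})^{=1}$, which is exactly the hypothesis of Theorem \ref{thm:inv_mmp}; it also guarantees at the end that $\Gamma_\ell-\Gamma_\ell^{\wedge 1}$, being relatively anti-adjoint and hence relatively ample over the Mori fiber base, has image all of $Z$. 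The paper achieves this not via semiampleness (which is also delicate for lc pairs and for $\mathbb{R}$-divisors) but via Kodaira's lemma together with an auxiliary effective divisor $B$ with $\mathrm{Supp}\,B=\mathrm{Supp}(\Delta_Y^{\ge 1})$ and $-(K_Y+\Delta_Y)-B$ still ample (Proposition \ref{prop:ample}); this pushes every coefficient-one component strictly above one. There is also a preliminary reduction from nef-and-big to ample (Lemma \ref{lem:Qnefbig}) that requires choosing the dlt blow-up compatibly with a log resolution of $(X,\Delta+E)$ so that no new non-klt divisors are created.

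Second, your endgame --- that ``the bigness built into $\Gamma$ rules out the cyclic and maximal-intersection configurations'' of the \cite{KX16} classification --- is not an argument, and \cite{KX16} is a characteristic-zero result that you cannot simply invoke; saying one ``re-derives the instance needed'' is precisely the point that must be proved. The paper's mechanism is concrete and elementary: at the end of the MMP one has a Mori fiber space $f:X_\ell\to Z$ with some component $D_0$ of the reduced boundary $f$-ample, and Lemma \ref{lem:MFS2} shows (by the Koll\'ar--Shokurov connectedness lemma, available in this setting by \cite{NT}, plus adjunction and induction on dimension) that every stratum of the remaining components meets $D_0$ in a nonempty connected set, so the dual complex is the cone with vertex $D_0$ over the dual complex of the other components, hence contractible. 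Without supplying this cone argument (or an actual substitute), your proof does not close.
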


In the latter half of this paper, we discuss an application of the above result on the dual complex 
to the study on Witt vector cohomology in positive characteristic. 
In \cite{Esn03}, it is shown that the vanishing $H^i(X, W \mathcal{O}_{X, \mathbb{Q}}) = 0$ holds for $i > 0$ 
and a geometrically connected smooth Fano variety $X$ defined over an algebraic closed field $k$. 
This vanishing theorem is very impressive because it is not known whether 
$H^i(X, \mathcal{O}_{X}) = 0$ holds or not 
by the lack of the Kodaira vanishing theorem in positive characteristic. 
In \cite{GNT}, Esnault's result is generalized to klt pairs of dimension $3$ and 
in $\mathrm{char} (k) > 5$. 
In \cite{NT}, the result in \cite{GNT} is generalized as a vanishing theorem of Nadel type (see Theorem \ref{thm:WNV}). 
The following main theorem of this paper is another generalization of the result in \cite{GNT} 
(we note that the result in \cite{GNT} is Theorem \ref{thm:main2} with the additional restriction that 
the pair $(X, \Delta)$ is klt). 

\begin{thm}[{$=$ Theorem \ref{thm:WAFV}}]\label{thm:main2}
Let $k$ be a perfect field of characteristic $p > 5$. 
Let $(X, \Delta)$ be a three-dimensional projective $\mathbb{Q}$-factorial log canonical pair over $k$ with $-(K_X + \Delta)$ ample. 
Then $H^i(X,W \mathcal{O}_{X, \mathbb{Q}}) = 0$ holds for $i > 0$. 
\end{thm}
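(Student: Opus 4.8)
The plan is to argue by induction on $\dim X$, reducing the vanishing to one on the non-klt locus via the Witt-vector Nadel vanishing (Theorem~\ref{thm:WNV}) and then computing the cohomology of that locus from the contractibility of the dual complex (Theorem~\ref{thm:main1'}) together with the lower-dimensional cases. The case $\dim X\le 1$ is immediate, since a one-dimensional projective log canonical pair with ample anti-log-canonical class has $X\cong\mathbb P^1_k$, for which $H^i(\mathbb P^1,\mathcal O)=0$ for $i>0$ and the Witt analogue follows; and when $\mathrm{Nklt}(X,\Delta)=\emptyset$, i.e. when $(X,\Delta)$ is klt, the statement is the theorem of \cite{GNT} (equivalently, the klt case of Theorem~\ref{thm:WNV}). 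For the inductive step, set $Z:=\mathrm{Nklt}(X,\Delta)$ with its reduced structure; as $(X,\Delta)$ is log canonical, $Z$ is cut out by the non-klt ideal $\mathcal J:=\mathcal J_{\mathrm{Nklt}}(X,\Delta)$. Since $W\mathcal O_{X,\mathbb Q}$ depends only on $X_{\mathrm{red}}$, there is a short exact sequence
\[
0\longrightarrow\mathcal K\longrightarrow W\mathcal O_{X,\mathbb Q}\longrightarrow W\mathcal O_{Z,\mathbb Q}\longrightarrow 0
\]
with $\mathcal K$ the Witt-vector sheaf attached to $\mathcal J$. Because $-(K_X+\Delta)$ is ample, hence nef and big, Theorem~\ref{thm:WNV} gives $H^i(X,\mathcal K)=0$ for $i>0$, so the long exact sequence yields $H^i(X,W\mathcal O_{X,\mathbb Q})\cong H^i(Z,W\mathcal O_{Z,\mathbb Q})$ for $i>0$. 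We may assume $Z\neq\emptyset$, so it suffices to prove $H^i(Z,W\mathcal O_{Z,\mathbb Q})=0$ for $i>0$, noting that $\dim Z\le 2$.

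Next I would bring in the dual complex. Take the dlt blow-up $g\colon(Y,\Delta_Y)\to(X,\Delta)$ provided by Theorem~\ref{thm:main1'}, so that $K_Y+\Delta_Y=g^*(K_X+\Delta)$ and $\mathcal D(\Delta_Y^{\ge 1})$ is contractible; since $(X,\Delta)$ is log canonical, $\Delta_Y^{\ge 1}=\lfloor\Delta_Y\rfloor$, and the irreducible strata of $\lfloor\Delta_Y\rfloor$ — its components together with their iterated intersections — are exactly the log canonical centres of $(Y,\Delta_Y)$; under $g$ they map onto the log canonical centres of $(X,\Delta)$, whose union is $Z$. By iterated adjunction each stratum $V\subseteq Z$ has a normalisation $V^\nu$ carrying a projective dlt (hence log canonical) pair $(V^\nu,\Delta_V)$ with $-(K_{V^\nu}+\Delta_V)$ the pull-back along the finite morphism $V^\nu\to V\hookrightarrow X$ of the ample divisor $-(K_X+\Delta)$; thus $-(K_{V^\nu}+\Delta_V)$ is ample and $\dim V^\nu\le 2$. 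By the inductive hypothesis in dimensions $1$ and $2$ — the passage to the possibly non-$\mathbb Q$-factorial $V^\nu$ causing no difficulty, a $\mathbb Q$-factorialisation having rationally chain connected fibres and hence leaving reduced Witt-vector cohomology unchanged — one gets $H^q(V^\nu,W\mathcal O_{V^\nu,\mathbb Q})=0$ for all $q>0$ and all strata $V$, while $H^0(V^\nu,W\mathcal O_{V^\nu,\mathbb Q})=W(k)_{\mathbb Q}$ since $V^\nu$ is geometrically connected.

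The stratification of $Z$ by its log canonical centres is organised by the dual complex $\mathcal D:=\mathcal D(\Delta_Y^{\ge 1})$, and should give rise to a Mayer--Vietoris (\v{C}ech-type) spectral sequence
\[
E_1^{p,q}=\bigoplus_{\sigma\in\mathcal D_p}H^q\bigl(V_\sigma^\nu,W\mathcal O_{V_\sigma^\nu,\mathbb Q}\bigr)\ \Longrightarrow\ H^{p+q}\bigl(Z,W\mathcal O_{Z,\mathbb Q}\bigr),
\]
where $\mathcal D_p$ is the set of $p$-simplices of $\mathcal D$ and $V_\sigma$ the corresponding stratum. By the previous step the rows with $q>0$ vanish, while the row $q=0$ is the simplicial cochain complex of $\mathcal D$ with coefficients in $W(k)_{\mathbb Q}$; hence the spectral sequence degenerates and $H^i(Z,W\mathcal O_{Z,\mathbb Q})\cong H^i(\mathcal D;W(k)_{\mathbb Q})$ for all $i$. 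Since $\mathcal D=\mathcal D(\Delta_Y^{\ge 1})$ is contractible by Theorem~\ref{thm:main1'}, the right-hand side vanishes for $i>0$; combined with the first step this gives $H^i(X,W\mathcal O_{X,\mathbb Q})=0$ for $i>0$ and completes the induction.

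The main obstacle is the last step: constructing the Mayer--Vietoris spectral sequence with $W\mathcal O_{\mathbb Q}$-coefficients for the possibly non-normal variety $Z$, and verifying that the combinatorial datum controlling it is precisely $\mathcal D(\Delta_Y^{\ge 1})$ rather than some coarser quotient. This relies on the good behaviour of the strata of the dlt pair $(Y,\Delta_Y)$ — all of their mutual intersections are again normal and carry dlt structures, so that the patching along the non-normal loci of $Z$ is governed by the honest simplicial complex $\mathcal D$ — and on the compatibility of the formation of $W\mathcal O_{\mathbb Q}$ with such patching, in the spirit of \cite{dFKX17}, \cite{GNT} and \cite{NT}; the contractibility input of Theorem~\ref{thm:main1'}, which also supplies the two-dimensional case needed for the induction, is then what annihilates all the higher cohomology.
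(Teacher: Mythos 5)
Your first reduction coincides with the paper's: the short exact sequence $0\to WI_{\mathrm{Nklt},\mathbb Q}\to W\mathcal O_{X,\mathbb Q}\to W\mathcal O_{\mathrm{Nklt},\mathbb Q}\to 0$ together with the Witt--Nadel vanishing (Theorem \ref{thm:WNV}) reduces everything to the vanishing of $H^{>0}$ of $W\mathcal O_{Z,\mathbb Q}$ for $Z=\mathrm{Nklt}(X,\Delta)$. After that the argument has a genuine gap, and it sits exactly where you flag ``the main obstacle.'' The dual complex $\mathcal D(\Delta_Y^{\ge 1})$ records the intersection pattern of the strata \emph{on the dlt blow-up $Y$}, not the stratification of $Z\subset X$ by log canonical centres: the map from simplices of $\mathcal D$ to lc centres of $(X,\Delta)$ is many-to-one and collapses dimensions (a $g$-exceptional component of $\Delta_Y^{=1}$ is a surface mapping onto a curve or a point of $Z$). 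So the proposed $E_1^{p,q}$ indexed by $\mathcal D_p$ is not a descent spectral sequence for $Z$, and its $q=0$ row is not the cochain complex of the gluing datum of $Z$. If one instead runs the spectral sequence upstairs on $\mathrm{Supp}\,\Delta_Y^{\ge 1}$, the inductive vanishing of the $q>0$ rows fails: the restriction of $-(K_Y+\Delta_Y)$ to a $g$-exceptional stratum is the pullback of an ample divisor from its lower-dimensional image, hence nef but not big, so no induction hypothesis with ``anti-log-canonical ample (or nef and big)'' applies to it. Translating the contractibility of $\mathcal D$ upstairs into topological statements about $Z$ downstairs is precisely the hard content of the paper — Lemma \ref{lem:rationality} (each component of $Z$ is rational, via the cone theorem on a surface stratum dominating it), Proposition \ref{prop:normality} (each component is unibranch, so that $W\mathcal O_{\mathbb Q}$ does not see its normalization), and Proposition \ref{prop:tree} ($Z$ forms a tree, via edge loops in $\mathcal D$ and simple connectedness) — and these occupy most of Section 4; they are exactly the inputs your descent computation would need and are not formal consequences of contractibility.

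Two further points. First, you omit the paper's opening move: replacing $\Delta$ by a slightly smaller divisor so that $\dim\mathrm{Nklt}(X,\Delta)\le 1$. This is what removes all two-dimensional strata and hence any need for a surface case of the theorem; without it your induction requires a two-dimensional analogue of Theorem \ref{thm:WNV}, which is not available in the paper (it is stated only under condition (ii), i.e.\ for threefolds), and Theorem \ref{thm:main1'} certainly does not ``supply the two-dimensional case'' of the vanishing theorem. Second, even in the curve case your scheme silently needs exactly the unibranch and tree statements: the $E_2^{1,0}$ term of the descent sequence for $\coprod\overline{C_i}\to C$ is $H^1$ of the incidence graph of branches, and its vanishing is conditions (2) and (3) of the paper's final step, not something read off from $\mathcal D$ without the work of Propositions \ref{prop:normality} and \ref{prop:tree}.
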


\noindent
By the vanishing theorem of Nadel type (Theorem \ref{thm:WNV}), the proof of Theorem \ref{thm:main2} is reduced to 
the topological study of the non-klt locus of the pair $(X, \Delta)$ (Proposition \ref{prop:normality} and Proposition \ref{prop:tree}). 
For the proof of Proposition \ref{prop:normality} and Proposition \ref{prop:tree}, 
we use the result on the dual complex (Theorem \ref{thm:main1'}) to obtain the topological information.

An important application of Witt vector cohomology is the rational point formula 
on varieties defined over a finite field (cf.\ \cite{Esn03, GNT, NT}). 
One of the motivation of the papers \cite{Esn03, GNT, NT} is to generalize the Ax-Katz theorem \cite{Ax64, Kat71}, 
which states that any hypersurface $H \subset \mathbb{P}^n$ of degree $d \le n$ defined over $\mathbb{F}_q$ 
has a rational point. 
Theorem \ref{thm:main2} suggests that the Ax-Katz theorem might be generalized to singular ambient spaces, 
and we actually obtain the following theorem in dimension three. 

\begin{thm}[{$=$ Theorem \ref{thm:RPF}}]\label{thm:main3}
Let $k$ be a finite field of characteristic $p > 5$. 
Let $(X, \Delta)$ be a geometrically connected three-dimensional projective $\mathbb{Q}$-factorial log canonical pair over $k$ with $-(K_X + \Delta)$ ample. 
Then the number of the $k$-rational points on the non-klt locus of $(X, \Delta)$ satisfies 
\[
\# \mathrm{Nklt}(X, \Delta) (k) \equiv 1 \mod {|k|}.
\]
In particular, there exists a $k$-rational point on $\mathrm{Nklt}(X, \Delta)$. 
\end{thm}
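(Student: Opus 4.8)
The plan is to deduce the rational point formula from the two vanishing theorems already at our disposal, namely Theorem~\ref{thm:main2} and the Witt--Nadel vanishing Theorem~\ref{thm:WNV}, together with the Lefschetz-type trace formula for Witt vector cohomology of Berthelot--Bloch--Esnault (compare the arguments in \cite{Esn03,GNT}). Write $q=|k|$ and $K_0=W(k)\otimes_{\mathbb{Z}}\mathbb{Q}$, and set $Z:=\mathrm{Nklt}(X,\Delta)$, regarded as a closed subscheme of $X$; thus $Z$ is proper over $k$, and we may assume $Z\neq\emptyset$ (this will be needed below to force $H^0(X,W\mathcal{I}_{\mathbb{Q}})=0$). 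Recall that for a proper $k$-scheme $T$ the Witt vector cohomology groups $H^i(T,W\mathcal{O}_{T,\mathbb{Q}})$ depend only on $T_{\mathrm{red}}$, carry a Frobenius $F$, and satisfy
\[
\#T(k)\ \equiv\ \sum_{i\ge 0}(-1)^i\,\mathrm{Tr}(F\mid H^i(T,W\mathcal{O}_{T,\mathbb{Q}}))\pmod{q},
\]
the trace being taken over $K_0$. So it suffices to show that the alternating sum on the right, for $T=Z$, is congruent to $1$.

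First I would introduce the defining triangle of the non-klt ideal. Let $\mathcal{I}=\mathcal{J}(X,\Delta)\subset\mathcal{O}_X$ be the multiplier (non-klt) ideal, whose co-support is $Z$. Applying the Witt functor to $0\to\mathcal{I}\to\mathcal{O}_X\to\mathcal{O}_{V(\mathcal{I})}\to 0$, then passing to the inverse limit over truncations and to $\mathbb{Q}$-coefficients (as in the construction of $W\mathcal{I}_{\mathbb{Q}}$ used in \cite{NT}), yields an $F$-equivariant distinguished triangle
\[
R\Gamma(X,W\mathcal{I}_{\mathbb{Q}})\longrightarrow R\Gamma(X,W\mathcal{O}_{X,\mathbb{Q}})\longrightarrow R\Gamma(Z,W\mathcal{O}_{Z,\mathbb{Q}})\xrightarrow{\ +1\ },
\]
where we use that $W\mathcal{O}_{\mathbb{Q}}$ of $V(\mathcal{I})$ agrees with that of its reduction $Z=V(\mathcal{I})_{\mathrm{red}}$. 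Since the alternating sum of traces of $F$ is additive along the associated long exact sequence, the alternating sum of $F$-traces on $H^\bullet(Z,W\mathcal{O}_{Z,\mathbb{Q}})$ equals the one on $H^\bullet(X,W\mathcal{O}_{X,\mathbb{Q}})$ minus the one on $H^\bullet(X,W\mathcal{I}_{\mathbb{Q}})$.

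Next I would evaluate these two terms on $X$. By Theorem~\ref{thm:main2}, $H^i(X,W\mathcal{O}_{X,\mathbb{Q}})=0$ for $i>0$; and since $X$ is geometrically connected and proper over the perfect field $k$, one has $H^0(X,W\mathcal{O}_{X,\mathbb{Q}})=K_0$ with $F$ acting as the identity, so the first alternating sum equals $1$. For the second, I would apply Theorem~\ref{thm:WNV} in its absolute form with the ample class $-(K_X+\Delta)$ (the Nadel-type vanishing for the multiplier ideal), giving $H^i(X,W\mathcal{I}_{\mathbb{Q}})=0$ for all $i>0$; moreover $H^0(X,W\mathcal{I}_{\mathbb{Q}})=0$, since $H^0(X,\mathcal{I})=0$ (because $X$ is connected and proper and $Z=V(\mathcal{I})\neq\emptyset$) forces $H^0(X,W_n\mathcal{I})=0$ for every $n$ — the natural $V$-filtration on $W_n\mathcal{I}$ has graded pieces isomorphic, as abelian sheaves, to $\mathcal{I}$ — and this vanishing is preserved by $\varprojlim_n$ and $\otimes\mathbb{Q}$. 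Hence the second alternating sum vanishes, so the alternating sum of $F$-traces on $H^\bullet(Z,W\mathcal{O}_{Z,\mathbb{Q}})$ is $1$, and the Berthelot--Bloch--Esnault congruence gives $\#\mathrm{Nklt}(X,\Delta)(k)\equiv 1\pmod{q}$. As $q\ge 2$, this count is then $\ge 1$, which yields the last assertion.

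Granting Theorems~\ref{thm:main2} and \ref{thm:WNV}, the proof is essentially formal; the only delicate points are the \emph{bookkeeping} ones inherited from \cite{GNT,NT}: producing the $F$-equivariant triangle attached to the non-klt ideal, verifying that $W\mathcal{O}_{\mathbb{Q}}$ of the subscheme $V(\mathcal{I})$ ignores its (possibly non-reduced) scheme structure so that it genuinely records $\#\mathrm{Nklt}(X,\Delta)(k)$, and fixing the normalization of the Frobenius trace so that $\mathrm{Tr}(F\mid H^0(X,W\mathcal{O}_{X,\mathbb{Q}}))=1$. No geometric input beyond the two cited vanishing theorems and the Berthelot--Bloch--Esnault trace formula enters.
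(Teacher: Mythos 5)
Your proposal is correct and follows essentially the same route as the paper: both use the exact sequence $0 \to WI_{Z,\mathbb{Q}} \to W\mathcal{O}_{X,\mathbb{Q}} \to W\mathcal{O}_{Z,\mathbb{Q}} \to 0$ together with Theorems \ref{thm:WNV} and \ref{thm:WAFV} and the Berthelot--Bloch--Esnault congruence. The only cosmetic difference is that the paper first deduces $H^i(Z, W\mathcal{O}_{Z,\mathbb{Q}})=0$ for $i>0$ from the long exact sequence and then invokes \cite[Proposition 6.9 (i)]{BBE07} applied to $Z$, whereas you unwind the Frobenius trace bookkeeping by hand.
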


\noindent
If $X$ is klt and $\Delta$ is a reduced divisor, then 
$\mathrm{Nklt}(X, \Delta) = \mathrm{Supp} (\Delta)$ holds and Theorem \ref{thm:main3} claims that 
there exists a $k$-rational point on $\mathrm{Supp} (\Delta)$. 
This formulation can be seen as a generalization of the Ax-Katz theorem from the view point of birational geometry. 

\begin{ackn} 
We would like to thank Professors Yoshinori Gongyo and Hiromu Tanaka for the discussions and Mirko Mauri for useful comments and suggestions. 
The author is partially supported by the Grant-in-Aid for Young Scientists
(KAKENHI No.\ 18K13384).
\end{ackn}

\section{Preliminaries}\label{section:prelimi}

\subsection{Notation}\label{subsection:notation}

\begin{itemize}
\item We basically follow the notations and the terminologies in \cite{Har77} and \cite{Kol13}.

\item 
For a field $k$, we say that $X$ is a \textit{variety over} $k$ if 
$X$ is an integral separated scheme of finite type over $k$. 

\item 
A \textit{sub log pair} $(X, \Delta)$ over a filed $k$ consists of a normal variety $X$ over $k$ and 
an $\mathbb{R}$-divisor $\Delta$ such that $K_X + \Delta$ is $\mathbb{R}$-Cartier. 
A sub log pair is called \textit{log pair} if $\Delta$ is effective. 
Note that the coefficient of $\Delta$ may be larger than one in this definition. 

\item 
Let $\Delta = \sum r_i D_i$ be an $\mathbb{R}$-divisor where $D_i$ are distinct prime divisors. 
We define 
$\Delta ^{\ge 1} := \sum_{r_i \ge 1} r_i  D_i$ and 
$\Delta ^{\wedge 1} := \sum r_i ' D_i$ where $r_i ' := \min \{ r_i, 1  \}$. 
We also define $\Delta ^{> 1}, \Delta ^{< 1}$, and $\Delta ^{= 1}$ similarly. 

\item Let $X$ be a variety over $k$. 
We denote by $(\star)$ the condition that $k$ and $X$ satisfy one of the following conditions:
\begin{itemize}
\item[(i)] $\mathrm{ch} (k) = 0$, or
\item[(ii)] $\mathrm{ch} (k) > 5$ and $\dim X = 3$. 
\end{itemize}
This condition is necessary for running a certain MMP appeared in this paper \cite{BCHM10, HX15, Bir16, BW17, Wal18, HNT}. 
\end{itemize}

\subsection{Results on minimal model program}
In this subsection we review results on minimal model program. 
In this subsection, $k$ is an algebraic closed field. 

First, we review the definition of singularities of log pairs. 
In this paper, we treat the following definitions only under the condition $(\star)$, 
because we do not know whether the definitions perform well also in $\dim X > 3$ and $\mathrm{ch} (k) >0$. 

\begin{defi}
\begin{enumerate}
\item 
Let $(X, \Delta)$ be a log pair over $k$. 
For a proper birational $k$-morphism $f: X' \to X$ from a normal variety $X'$ 
and a prime divisor $E$ on $X'$, the \textit{log discrepancy} of $(X, \Delta)$ 
at $E$ is defined as
\[
a_E (X, \Delta) := 1 + \mathrm{coeff}_E (K_{X'} - f^* (K_X + \Delta)). 
\]

\item 
A log pair $(X, \Delta)$ is called \textit{klt} (resp.\ \textit{log canonical}) if 
$a_E (X, \Delta) > 0$ (resp. $\ge 0$) for any prime divisor $E$ over $X$. 

\item 
A log pair $(X, \Delta)$ is called \textit{dlt} if the coefficients of $\Delta$ are at most one and 
there exists a log resolution $g: Y \to X$ of the pair $(X, \Delta)$ such that 
$a_E (X, \Delta) > 0$ holds for any $g$-exceptional prime divisor $E$ on $Y$. 

\item 
Let $(X, \Delta)$ be a dlt pair and let $\Delta ^{=1} = \sum _{i \in I} E_i$ be the irreducible decomposition. 
For any non-empty subset $J \subset I$, 
an connected component of $\bigcap _{i \in J} E_i$ is called a \textit{stratum} of $\Delta ^{=1}$. 
\end{enumerate}
\end{defi}

\begin{rmk}
The above definition (3) is equivalent to the definition in \cite[Definition 2.37]{KM98}: 
\begin{itemize}
\item The coefficients of $\Delta$ are at most one. 
Moreover, there exists an open subset $U \subset X$ such that $(U, \Delta |_U)$ is log smooth and no non-klt center of 
$(X, \Delta)$ is contained in $X \setminus U$. 
\end{itemize}
This equivalence is shown in \cite{Sza94} in characteristic zero. 
The equivalence is also true in positive characteristic (in dimension three) 
since the Szab\'{o}'s resolution lemma (\cite[Lemma 2.3.19]{Fuj17}) also holds by \cite[Proposition 4.1]{CP08} 
(see also \cite[Proposition 2.3.20]{Fuj17} and \cite[2.4, 2.5]{Bir16}). 
Hence, even in our definition, being dlt is preserved under the MMP (\cite[Corollary 3.44]{KM98}). 
\end{rmk}

The following proposition is necessary for defining the dual complexes of dlt pairs. 

\begin{prop}[{\cite[Theorem 4.16]{Kol13}, \cite[Section 3.9]{Fuj07}, \cite[Proposition 1]{DH16}}]\label{prop:dltstrata}
Let $(X, \Delta)$ be a $\mathbb{Q}$-factorial dlt pair over $k$ and 
$\Delta ^{=1} = \sum _{i \in I} E_i$ be the irreducible decomposition. 
We assume the condition $(\star)$ (defined in Subsection \ref{subsection:notation}). 
Then the following hold. 
\begin{enumerate}
\item Let $J \subset I$ be a subset. If $\bigcap _{i \in J} E_i \not= \emptyset$, 
then each connected component of $\bigcap _{i \in J} E_i$ is normal (hence irreducible) and 
has codimension $\# J$. 

\item Let $J \subset I$ be a subset, and let $j \in J$. 
Then each connected component of $\bigcap _{i \in J} E_i$
is contained in the unique connected component of $\bigcap _{i \in J \setminus \{ j \}} E_i$. 
\end{enumerate}
\end{prop}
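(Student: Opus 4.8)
The plan is to prove Proposition \ref{prop:dltstrata} by reducing everything to the characteristic zero case (where it is classical, via \cite[Theorem 4.16]{Kol13} and \cite[Section 3.9]{Fuj07}) and to a local analytic description near the generic point of a stratum, so the only real work is in characteristic $p$ when $\dim X = 3$. The key observation is that both statements are local on $X$ and stable under shrinking to the smooth log-smooth locus: by the resolution lemma of Szab\'o (\cite[Lemma 2.3.19]{Fuj17}, valid in our setting by \cite[Proposition 4.1]{CP08}), a $\mathbb{Q}$-factorial dlt pair $(X,\Delta)$ admits an open subset $U\subset X$ such that $(U,\Delta|_U)$ is log smooth and every non-klt center meets $U$; since every stratum of $\Delta^{=1}$ is a non-klt center, it suffices to prove normality, irreducibility, the codimension count, and the nesting statement after restricting to $U$, i.e.\ in the log-smooth case.

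First I would treat the log-smooth case. If $(X,\Delta)$ is log smooth and $\Delta^{=1}=\sum_{i\in I}E_i$ is a reduced simple normal crossing divisor on a smooth variety, then for $J\subset I$ the intersection $\bigcap_{i\in J}E_i$ is smooth (hence its connected components are smooth and irreducible) and each nonempty component has codimension exactly $\#J$: this is just the definition of simple normal crossings, checked \'etale-locally where the $E_i$ are coordinate hyperplanes. Part (2) in the log-smooth case is equally immediate: a component $Z$ of $\bigcap_{i\in J}E_i$ is contained in $\bigcap_{i\in J\setminus\{j\}}E_i$, and since the latter is smooth its connected components are open and closed, so $Z$ lies in exactly one of them.

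Next I would globalize back to $X$. For part (1), let $W$ be a connected component of $\bigcap_{i\in J}E_i$. Its intersection with $U$ is a connected (by the snc structure, the strata through a given point have connected trace, using that $X$ is normal hence $U$ is dense and $W\setminus U$ has codimension $\ge 1$ in $W$ once we know $W$ is reduced along $U$) open subset which is smooth of codimension $\#J$; the point is that $W$ is \emph{reduced} and \emph{normal}, which one gets by the standard argument that a dlt stratum satisfies Serre's conditions $S_2$ (adjunction, cf.\ \cite[Section 4]{Kol13}) and is regular in codimension one because its generic point lies in $U$. The normality of $W$ together with density of $W\cap U$ forces $W$ to be irreducible, and the codimension of $W$ in $X$ equals the codimension of $W\cap U$ in $U$, namely $\#J$. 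For part (2), run the same reduction: a component $Z$ of $\bigcap_{i\in J}E_i$ meets $U$, inside $U$ it lies in a unique connected component $Z'$ of $(\bigcap_{i\in J\setminus\{j\}}E_i)\cap U$, and $Z'$ is dense in a unique connected component of $\bigcap_{i\in J\setminus\{j\}}E_i$ by part (1) applied to $J\setminus\{j\}$; since $Z$ is irreducible (by part (1)) and connected, its closure relation is determined by the generic point, so $Z$ lies in that unique component.

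The main obstacle is the reduction step itself, specifically verifying in positive characteristic that the resolution lemma gives an open set $U$ meeting \emph{every} dlt stratum and that the adjunction/normality machinery (which in characteristic zero rests on Kawamata--Viehweg vanishing and the theory of quasi-log structures) is available in dimension three in characteristic $p>5$. The cited references \cite[Proposition 1]{DH16}, together with \cite[Proposition 2.3.20]{Fuj17} and \cite[2.4, 2.5]{Bir16}, are exactly what handle this: the three-dimensional MMP and adjunction in characteristic $p>5$ have been established well enough to run this argument. I would therefore organize the proof as: (i) reduce to the log-smooth locus via the resolution lemma and the fact that strata are non-klt centers; (ii) prove (1) and (2) in the log-smooth case directly; (iii) use $S_2$-ness and regularity in codimension one of dlt strata (via the cited adjunction results) to conclude normality globally; (iv) deduce irreducibility and the codimension count from normality plus density of the log-smooth locus; and (v) deduce (2) from (1).
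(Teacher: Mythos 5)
The paper's own proof of this proposition is essentially a citation: it refers to \cite[Theorem 4.16]{Kol13} (with \cite[Proposition 1]{DH16} supplying the characteristic-$p$, dimension-$3$ case), and adds only one supplementary argument, namely that a \emph{connected} component of $\bigcap_{i\in J}E_i$ is irreducible because the intersection of two lc centers is again a union of lc centers (\cite[Theorem 9.1]{Fuj11}, \cite[Lemma 1]{DH16}). Your proposal instead attempts a self-contained argument by restricting to the log smooth locus $U$ and then upgrading to global normality via Serre's criterion. That is a genuinely different route, and it breaks down at exactly the points where the cited theorems do real work.

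Concretely: (a) Regularity in codimension one of a stratum $W$ does \emph{not} follow from ``its generic point lies in $U$''; that only gives regularity at the generic point ($R_0$), and $W\setminus U$ may contain codimension-one points of $W$ at which $W$ is singular or non-reduced. Normality of dlt strata --- already for the divisors $E_i$ themselves --- is a substantive theorem (\cite[5.52]{KM98}, \cite[Theorem 4.16]{Kol13}, and in characteristic $p>5$ precisely \cite[Proposition 1]{DH16}) proved by adjunction and induction together with vanishing/torsion-freeness or Cohen--Macaulayness inputs; the $S_2$ statement you invoke ``by adjunction'' is itself the hard part in positive characteristic, and $R_1$ does not come for free. (b) Your reduction to $U$ tacitly assumes that every irreducible component of $\bigcap_{i\in J}E_i$ meets $U$, that $W\cap U$ is dense in $W$, and that $W\cap U$ is connected. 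All three are consequences of the proposition rather than inputs: an irreducible component disjoint from $U$ is not a priori an lc center, and your parenthetical justification of connectedness of $W\cap U$ (``once we know $W$ is reduced along $U$'') presupposes the conclusion. (c) The one point the paper does argue explicitly --- that distinct irreducible components of $\bigcap_{i\in J}E_i$ are disjoint, so that connected components are irreducible --- is absent from your sketch; deducing irreducibility of $W$ from normality of $W$ is circular here, since if two normal irreducible components met, $W$ would fail to be normal. The paper closes this gap with the lc-center intersection fact cited above combined with the codimension count; some version of that argument is needed in your write-up as well.
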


\begin{proof}
See \cite[Theorem 4.16]{Kol13}. The assertion that each connected component of $\bigcap _{i \in J} E_i$ is irreducible 
is not explicitly written in \cite[Theorem 4.16]{Kol13}. 
However it follows from the fact that the intersection of any two log canonical centers is also a union of log canonical centers 
(cf.\ \cite[Theorem 9.1]{Fuj11}, \cite[Lemma 1]{DH16}). 

(2) is trivial. 
\end{proof}

In this paper, we will use the terminology ``dlt blow-up" in the following sense. 
\begin{defi}
Let $(X, \Delta)$ be a log pair over $k$ and let $g:Y \to X$ be a projective birational $k$-morphism. 
We call $g$ a \textit{dlt blow-up} of $(X, \Delta)$ if the following conditions hold:
\begin{itemize}
\item[(1)] $a_E(X, \Delta) \le 0$ holds for any $g$-exceptional prime divisor $E$. 
\item[(2)] $(Y, \Delta _Y ^{\wedge 1})$ is a $\mathbb{Q}$-factorial dlt pair, where 
$\Delta _Y$ is the $\mathbb{R}$-divisor defined by $K_Y + \Delta _Y = g^*(K_X + \Delta)$. 
\end{itemize}
\end{defi}

\begin{thm}\label{thm:dltmodif}
Let $(X, \Delta)$ be a log pair over $k$ with the condition $(\star)$. 
Then a dlt blow-up of $(X, \Delta)$ exists. 
Further, we can take a dlt blow-up $g: V \to X$ with the following additional condition: 
\begin{itemize}
\item[(3)] $g^{-1}(\mathrm{Nklt} (X, \Delta)) = \mathrm{Nklt} (V, \Delta _V)$ holds. 
\end{itemize}
\end{thm}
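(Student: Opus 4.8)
The plan is to reduce the statement to the existence of a $\mathbb{Q}$-factorial dlt modification in the usual sense, which is available under $(\star)$ by the references cited in Subsection \ref{subsection:notation} (the MMP with scaling of \cite{BCHM10} in characteristic zero, and \cite{HX15, Bir16, BW17, HNT} in dimension three and $\mathrm{ch}(k)>5$). First I would pass from $(X,\Delta)$ to the pair $(X, \Delta^{\wedge 1})$; since $\Delta^{\wedge 1}\le \Delta$ and both have the same non-klt locus and the same divisorial part of $\Delta^{=1}$, a dlt modification of $(X,\Delta^{\wedge 1})$ will serve our purpose once the discrepancy bookkeeping is done. Concretely, take a log resolution $h\colon W\to X$ of $(X,\operatorname{Supp}\Delta)$ and write $K_W+\Gamma = h^*(K_X+\Delta^{\wedge 1})+E$ with $\Gamma,E\ge 0$ having no common components; set $\Gamma'$ to be $\Gamma^{\wedge 1}$ plus all $h$-exceptional prime divisors with coefficient $\ge 1$ reduced to $1$, plus the strict transforms, so that $(W,\Gamma')$ is log smooth and $\lceil -(K_W+\Gamma')\rceil$ is $h$-exceptional and effective off the strict transform. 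Then run a $(K_W+\Gamma')$-MMP over $X$; this terminates under $(\star)$ and produces $g\colon V\to X$ with $(V,\Delta_V^{\wedge 1})$ $\mathbb{Q}$-factorial dlt, contracting exactly the divisors of non-positive log discrepancy. This gives conditions (1) and (2).

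For condition (3), the key point is that a dlt modification obtained this way is \emph{crepant} over the non-klt locus: by construction $K_V+\Delta_V = g^*(K_X+\Delta)$, so $a_E(X,\Delta)=a_E(V,\Delta_V)$ for every divisor $E$ over $X$. One inclusion, $\mathrm{Nklt}(V,\Delta_V)\subset g^{-1}(\mathrm{Nklt}(X,\Delta))$, is immediate since $g$ maps non-klt centres to non-klt centres. For the reverse inclusion I would argue as follows: if $x\in\mathrm{Nklt}(X,\Delta)$, pick a divisor $E$ over $X$ with $a_E(X,\Delta)\le 0$ whose centre on $X$ contains $x$; we must arrange that $E$ (or some such divisor) has its centre on $V$ a divisor, i.e.\ that $g$ extracts all non-klt places over the relevant locus. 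This is exactly where one uses the freedom in choosing the MMP: starting the resolution $W\to X$ from a log resolution that already extracts \emph{all} divisors of log discrepancy $\le 0$ over $X$ (there are finitely many in the dlt/lc world, or one adds them by hand as in \cite[proof of Theorem 3.1]{KK10}), and then checking that the MMP does not contract any of them, since they all have non-positive log discrepancy and the MMP for $(W,\Gamma')$ only contracts divisors in $\Gamma'-\Gamma'^{\le 1}$-type position. Hence every non-klt point of $X$ is dominated by a $g$-exceptional (or strict-transform) divisor lying in $\Delta_V^{=1}$, giving $g^{-1}(\mathrm{Nklt}(X,\Delta))\subset \mathrm{Nklt}(V,\Delta_V)$.

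The main obstacle I anticipate is condition (3), and specifically making sure the MMP one runs does not contract a divisor whose image meets $\mathrm{Nklt}(X,\Delta)$ only in higher codimension on the contracted model; one must be careful that ``all non-klt places are extracted'' is preserved step by step, rather than merely at the start and the end. The clean way around this is to invoke the statement already in the literature: the construction in \cite[Theorem 3.1]{KK10} and \cite[Corollary 1.36]{Kol13} (together with the characteristic-$p$, dimension-three analogues under $(\star)$) produces a dlt modification satisfying precisely (1), (2), and the crepancy $K_V+\Delta_V=g^*(K_X+\Delta)$, and from crepancy plus the dlt property of $(V,\Delta_V^{\wedge 1})$ one deduces (3) formally: a point $v\in V$ lies in $\mathrm{Nklt}(V,\Delta_V)$ iff either $v\in\operatorname{Supp}(\Delta_V^{>1})$, or $v$ lies on a stratum of $\Delta_V^{=1}$, or $(V,\Delta_V^{\wedge 1})$ is not klt at $v$; in each case crepancy forces $g(v)\in\mathrm{Nklt}(X,\Delta)$, and conversely a standard discrepancy computation (as in the proof of Proposition \ref{prop:dltstrata}) shows every non-klt centre of $X$ is the image of such a $v$. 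This reduces the whole theorem to citing the existence of crepant $\mathbb{Q}$-factorial dlt modifications plus an elementary verification, which is the shape I would give the final proof.
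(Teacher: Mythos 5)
Your existence argument for conditions (1) and (2) follows essentially the same route as the paper (a log resolution followed by a relative MMP with a boundary of the form ``$\widetilde{\Delta}^{\wedge 1}$ plus reduced exceptional divisors''), though you leave out the two technical points that make it work: termination is obtained by perturbing with an $f$-ample divisor supported on the exceptional locus so that the pair becomes $\mathbb{R}$-linearly equivalent to a klt one, and the fact that every exceptional divisor of \emph{positive} log discrepancy is contracted by the MMP is proved via the negativity lemma applied to $K_Y+\Omega-f^*(K_X+\Delta)$, not merely asserted. These are recoverable omissions.

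The genuine gap is in condition (3). Your argument shows only that for each $x\in\mathrm{Nklt}(X,\Delta)$ some non-klt place with centre through $x$ survives as a divisor on $V$, i.e.\ that the fibre $g^{-1}(x)$ \emph{meets} $\mathrm{Nklt}(V,\Delta_V)$. Condition (3) is the set equality $g^{-1}(\mathrm{Nklt}(X,\Delta))=\mathrm{Nklt}(V,\Delta_V)$, whose nontrivial half is the \emph{containment} of the entire fibre in $\mathrm{Nklt}(V,\Delta_V)$. When $X$ is not $\mathbb{Q}$-factorial the exceptional locus of $g$ need not be divisorial, so $g^{-1}(x)$ can contain curves or points lying on no component of $\Delta_V^{\ge 1}$ and on no stratum of $\Delta_V^{=1}$; crepancy gives no information about such points, and your ``formal deduction'' (``every non-klt centre of $X$ is the image of such a $v$'') does not address them. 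This is precisely why the paper states (3) as an \emph{additional} condition, proves it by invoking Step 2 of the proof of Proposition 3.5 of \cite{HNT} (an extra MMP-type argument arranging that fibres over non-klt points are swallowed by the non-klt locus upstairs), and why Remark \ref{rmk:dltbup}(1) records that (3) is automatic only when $X$ is $\mathbb{Q}$-factorial (there $\mathrm{Exc}(g)$ is purely divisorial and contained in $\mathrm{Supp}\,\Delta_V^{\ge 1}$ by condition (1)). Falling back on the Koll\'ar--Kov\'acs construction or on Corollary 1.36 of \cite{Kol13} does not close this gap either: those produce dlt modifications satisfying (1) and (2) but not (3), and they are characteristic-zero statements, whereas the theorem is also needed under condition (ii) of $(\star)$.
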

\begin{proof}
By Step 2 in the proof of \cite[Proposition 3.5]{HNT}, 
in order to show the existence of a dlt blow-up with the condition (3), 
it is sufficient to show the existence of a usual dlt blow-up (that is with the only conditions (1) and (2)). 
For the existence of a dlt blow-up, the same proof as in \cite[Theorem 10.4]{Fuj11} works as follows. 

Let $f: Y \to X$ be a log resolution of $(X, \Delta)$. 
Let $F = \sum F_i$ be the sum of the $f$-exceptional divisors $F_i$ with $a_{F_i} (X, \Delta) \le 0$, and 
let $G = \sum G_i$ be the sum of the $f$-exceptional divisors $G_i$ with $a_{G_i} (X, \Delta) > 0$. 
Let $\widetilde{\Delta}$ be the strict transform of $\Delta$ on $Y$. 
We may assume that there exists an effective $\mathbb{R}$-divisor $H$ on $Y$ such that 
$\mathrm{Supp}\, H = \mathrm{Supp}\, (F+G)$ and that $-H$ is $f$-ample. 
We set an $\mathbb{R}$-divisor $\Omega$ as 
\[
\Omega = \widetilde{\Delta}^{\wedge 1} + F + (1 - \epsilon) G - \delta H 
\]
for sufficiently small $\epsilon, \delta > 0$. 
Since $-H$ is $f$-ample, there exists an effective ample $\mathbb{R}$-divisor $A$ on $Y$ such that 
$- \delta H \sim _{/X,\, \mathbb{R}} A$. 
We set 
\[
\overline{\Omega} = \widetilde{\Delta}^{\wedge 1} + F + (1 - \epsilon) G + A. 
\]
We may assume that $(Y, \overline{\Omega})$ is dlt. 
Note that $K_Y + \Omega \sim _{/X,\, \mathbb{R}} K_Y + \overline{\Omega}$. 

Since $A$ is ample, there exists an $\mathbb{R}$-divisor $\overline{\Omega}'$ such that 
$\overline{\Omega}' \sim _{\mathbb{R}} \overline{\Omega}$ and $(Y, \overline{\Omega}')$ is klt. 
Hence, we may run a $(K_Y + \Omega)$-MMP over $X$ and it terminates. 
Let $Y'$ be the end result and let $h: Y' \to X$ be the induced morphism. 
We shall show that $h: Y' \to X$ is a dlt blow-up of $(X, \Delta)$. 
We have 
\begin{align*}
K_Y + \Omega 
& \sim _{/X,\, \mathbb{R}} K_Y + \Omega - f^*(K_X + \Delta) \\
&= - (\widetilde{\Delta} -  \widetilde{\Delta}^{\wedge 1}) + \sum a_i F_i + \sum b_i G_i - \epsilon G - \delta H, 
\end{align*}
where $a_i = a_{F_i} (X, \Delta) \le 0$ and $b_i = a_{G_i} (X, \Delta) >0$. 
Since $b_i > 0$ and $\epsilon$ and $\delta$ are sufficiently small, 
it follows that 
\[
\mathrm{coeff}_{G_j} \left( \sum a_i F_i + \sum b_i G_i - \epsilon G - \delta H \right) > 0
\] 
for each $G_j$. 
Hence by the negativity lemma, all the divisors $G_i$'s are contracted in this MMP. 
Therefore $a_E (X, \Delta) \le 0$ holds for any $h$-exceptional prime divisor $E$. 
Since the $(K_Y + \Omega)$-MMP is also a $(K_Y + \overline{\Omega})$-MMP, 
the pair $(Y', \overline{\Omega} _{Y'})$ is still dlt where $\overline{\Omega} _{Y'}$ is the push forward of $\overline{\Omega}$. 
Define $\Delta _{Y'}$ by $K_{Y'} + \Delta _{Y'} = h^* (K_X + \Delta)$. 
Then $\Delta _{Y'} ^{\wedge 1}$ is the push forward of $\widetilde{\Delta}^{\wedge 1} + F$ on $Y'$, 
and hence $0 \le \Delta _{Y'} ^{\wedge 1} \le \overline{\Omega} _{Y'}$ holds. 
Therefore $(Y', \Delta _{Y'} ^{\wedge 1})$ is also dlt. 
We have proved that $g$ is a dlt blow-up of $(X, \Delta)$. 
\end{proof}

\begin{rmk}\label{rmk:dltbup}
\begin{enumerate} 
\item When $X$ is $\mathbb{Q}$-factorial, any dlt blow-up of $(X, \Delta)$ satisfies condition (3) in 
Theorem \ref{thm:dltmodif}. 
\item 
If $V \to X$ is a log resolution of $(X, \Delta)$, 
then we can construct (by the proof above) a dlt blow-up $Y \to X$ of $(X, \Delta)$ such that 
the induced birational map $Y \dasharrow V$ does not contract any divisor on $Y$. 
\end{enumerate}
\end{rmk}

\subsection{Dual complexes}\label{subsection:dual_cpx}
In this subsection, we explain how to define a CW complex from a dlt pair, and we also prove an invariant property (Proposition \ref{prop:indep}). 

First, we briefly review the notion of $\Delta$-complexes following \cite{Hat02}. 

\begin{defi}
\begin{enumerate}
\item Let $X = \cup _{\varphi _{\alpha}} F_{\alpha}$ be a CW complex with the attaching maps $\varphi _{\alpha} : F_{\alpha} \to X$. 
We call $X$ a \textit{$\Delta$-complex} when each cell $F_{\alpha}$ is a simplex and 
the restriction of $\varphi _{\alpha}$ to each face of $F_{\alpha}$ is equal to
the attaching map $\varphi _{\beta} : F_{\beta} \to X$ for some $\beta$. 

\item
A $\Delta$-complex $X$ is called \textit{regular} if the attaching maps are injective, 
or equivalently, if every $d$-cell in $X$ has $d+1$ distinct vertices. 

\item
A regular $\Delta$-complex $X$ is called a \textit{simplicial complex} 
if the intersection of any two cells in $X$ is a face of both cells, or equivalently, 
every $k+1$ vertices in $X$ is incident to at most one $k$-cell. 
\end{enumerate}
\end{defi}

For a dlt pair $(X, \Delta)$, we define the dual complex $\mathcal{D}(\Delta ^{=1})$. 
\begin{defi}
\begin{enumerate}
\item Let $(X, \Delta)$ be a $\mathbb{Q}$-factorial dlt pair over $k$ with the condition $(\star)$ and 
let $\Delta ^{=1} = \sum _{i \in I} E_i$ be the irreducible decomposition. 
Then the \textit{dual complex} $\mathcal{D}(\Delta ^{=1})$ is a CW complex obtained as follows. 
The vertices of $\mathcal{D}(\Delta ^{=1})$ are the set of $\{ E_i \} _{i \in I}$. 
To each $k$-codimensional stratum $S$ of $\Delta ^{=1}$ we associate a $k$-dimensional cell. 
The attaching map is uniquely defined by Proposition \ref{prop:dltstrata} (2). 

\item Let $(X, \Delta)$ be a $\mathbb{Q}$-factorial pair over $k$ with the condition $(\star)$. 
Suppose that $(X, \Delta^{\wedge 1})$ is dlt. Then we define 
$\mathcal{D} (\Delta ^{\ge 1}) = \mathcal{D} ((\Delta^{\wedge 1}) ^{= 1})$
\end{enumerate}
\end{defi}

\begin{prop}\label{prop:regular}
Let $(X, \Delta)$ be a $\mathbb{Q}$-factorial dlt pair over $k$ with the condition $(\star)$. 
Then the dual complex $\mathcal{D}(\Delta ^{=1})$ is a regular $\Delta$-complex. 
\end{prop}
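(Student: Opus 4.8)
The plan is to unwind the definitions and invoke Proposition \ref{prop:dltstrata} at each step. Recall that a $\Delta$-complex is \emph{regular} precisely when each $d$-cell has $d+1$ distinct vertices, equivalently when the attaching maps are injective. So I need to show: for each stratum $S$ of $\Delta^{=1}$ of codimension $k$, the $k$ components $E_i$ (from $i\in J$, $\#J=k$) containing $S$ are pairwise distinct, and moreover $S$ is genuinely a codimension-$k$ stratum, i.e.\ the associated simplex is $k$-dimensional. But distinctness of the $E_i$ indexed by $i\in J$ is automatic since the $E_i$ are the \emph{distinct} prime components in the irreducible decomposition $\Delta^{=1}=\sum_{i\in I}E_i$; the content is really that the codimension is exactly $\#J$, which is exactly Proposition \ref{prop:dltstrata}(1).

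First I would recall the construction: a $k$-codimensional stratum $S$ of $\Delta^{=1}$ is, by definition, a connected component of $\bigcap_{i\in J}E_i$ for some $J\subset I$, and to such an $S$ we attach a simplex of dimension $\#J - 1$ (so that $S$ "of codimension $k$" gives a $(k-1)$-cell; here I should be careful to match the paper's indexing convention—a vertex $E_i$ is the codimension-$1$ stratum $E_i$ itself, giving a $0$-cell). The vertices of the simplex attached to $S$ are the strata of $\Delta^{=1}$ of the form: connected component of $\bigcap_{i\in J'}E_i$ with $J'\subset J$; by Proposition \ref{prop:dltstrata}(2) there is a unique such component containing $S$ for each $J'$, which is what makes the attaching map well-defined. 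In particular the vertices of this simplex are $E_i$ for $i\in J$ (the codimension-$1$ faces through $S$), and these are $\#J$ distinct elements of $I$.

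Next I would argue that the attaching map is injective on the closed simplex. The only way injectivity could fail is if two faces of the simplex attached to $S$—say the faces corresponding to $J_1', J_2'\subset J$—were sent to the same cell, i.e.\ the same stratum of $\Delta^{=1}$. The stratum attached to the face $J'$ is a component of $\bigcap_{i\in J'}E_i$, which by Proposition \ref{prop:dltstrata}(1) has codimension exactly $\#J'$; hence if the $J_1'$-face and the $J_2'$-face map to the same stratum then $\#J_1'=\#J_2'$, and since both faces' strata are the unique component of their respective intersections containing $S$, one then checks (again using that each $E_i$ is determined as a codimension-$1$ stratum through $S$, i.e.\ the set $J'$ is recovered from the stratum as the set of components $E_i$ containing it) that $J_1'=J_2'$. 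Equivalently, and more cleanly: the codimension-$1$ faces through $S$ are the $E_i$, $i\in J$, and these are distinct, so the $d$-cell attached to $S$ has $d+1 = \#J$ distinct vertices, which is exactly the definition of regularity.

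**The main obstacle** is mostly bookkeeping: making sure the dimension/codimension conventions in the definition of $\mathcal{D}(\Delta^{=1})$ line up (a codimension-$k$ stratum $\leftrightarrow$ a $(k-1)$-cell? or a $k$-cell with the set $\{E_i\}$ itself counting as the "cells" of an empty intersection?—the paper writes "to each $k$-codimensional stratum we associate a $k$-dimensional cell" but also says the vertices are $\{E_i\}$, which are codimension-$1$, so I must reconcile this, probably by reading "$k$-dimensional" as meaning the simplex on the $k+1$ relevant divisors, or there is an off-by-one that the paper intends). Modulo that, the substantive input is entirely Proposition \ref{prop:dltstrata}(1), which guarantees the codimension of a stratum equals the number of divisors cutting it out, so that a $d$-cell cannot have a repeated vertex; and part (2), which guarantees the attaching maps are well-defined in the first place. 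No new geometry is needed beyond invoking these. I would not belabor the verification that the result is a $\Delta$-complex at all (each face of each simplex is again a stratum, by (2)); that is immediate from the construction.
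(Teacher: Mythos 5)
Your proposal is correct and takes essentially the same route as the paper, which simply cites Proposition \ref{prop:dltstrata}(1) (the codimension of a stratum equals the number of divisors cutting it out, so a cell cannot have a repeated vertex) and refers to \cite{dFKX17} for details. Your observation about the off-by-one in the paper's phrasing (``$k$-dimensional cell'' for a codimension-$k$ stratum should be $(k-1)$-dimensional, matching the convention that the $E_i$ themselves are the vertices) is a fair reading of a typo in the definition and does not affect the argument.
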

\begin{proof}
The assertion follows from Proposition \ref{prop:dltstrata} (1). See \cite{dFKX17} for more detail. 
\end{proof}

The following theorem from \cite{dFKX17} says that the dual complex is preserved under a certain MMP up to simple-homotopy equivalence. 

\begin{thm}[{\cite[Proposition 19]{dFKX17}}]\label{thm:inv_mmp}
Let $(X, \Delta)$ be a $\mathbb{Q}$-factorial dlt pair over $k$ with condition $(\star)$ and 
let $f:X \dasharrow Y$ be a divisorial contraction or 
flip corresponding to a $(K_X + \Delta)$-negative extremal ray $R$. 
Assume that there is a prime divisor $D_0 \subset \Delta ^{=1}$ such that $D_0 \cdot R >0$. 
Then $\mathcal{D}(\Delta ^{=1})$ collapses to $\mathcal{D}(\Delta _Y ^{=1})$ where we set $\Delta _Y := f_* \Delta$. 
In particular $\mathcal{D}(\Delta ^{= 1})$ and $\mathcal{D}(\Delta _Y ^{=1})$ are simple-homotopy equivalent. 
\end{thm}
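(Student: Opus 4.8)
The plan is to reduce the statement to the local description of how strata of $\Delta^{=1}$ behave under a single step of the MMP, and then invoke the combinatorial mechanism of a collapse of CW complexes. First I would set up notation: write $\Delta^{=1} = \sum_{i \in I} E_i$, and let $D_0 = E_{i_0}$ be the distinguished component with $D_0 \cdot R > 0$. The first reduction is to the divisorial and flip cases separately; in both cases I would analyze which strata of $\Delta^{=1}$ are affected by the modification $f: X \dashrightarrow Y$. The key observation is that a stratum $S$ is contracted (or its image acquires smaller dimension) exactly when $S$ lies in the exceptional locus of $f$ (for a divisorial contraction) or meets the flipping/flipped locus (for a flip), and by the cone theorem these loci are covered by curves in the class $R$. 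Since $D_0 \cdot R > 0$, every curve in $R$ meets $D_0$, so every affected stratum is contained in $D_0$; this is what forces the collapse to be "toward" the vertex $D_0$ rather than an arbitrary homotopy.

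Next I would carry out the combinatorial bookkeeping. In the divisorial contraction case where $f$ contracts a divisor $E$: if $E \not\subset \Delta^{=1}$, then I need to check that $\mathcal{D}(\Delta^{=1})$ and $\mathcal{D}(\Delta_Y^{=1})$ are literally equal as $\Delta$-complexes, using Proposition~\ref{prop:dltstrata} to match up strata on $X$ and $Y$ (the point being that intersecting with the non-$\Delta^{=1}$ divisor $E$ does not produce new strata of $\Delta^{=1}$, and $f$ is an isomorphism away from $E$). If $E \subset \Delta^{=1}$, then $E$ is one of the $E_i$'s, say $E = E_{i_1}$ with $i_1 \neq i_0$ (since $D_0$ cannot be contracted as $D_0 \cdot R > 0$ means $D_0$ is positive on the extremal ray, hence not contracted). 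I would show that the cells of $\mathcal{D}(\Delta^{=1})$ corresponding to strata contained in $E$ — these are exactly the closed stars issues — and that removing the vertex $E_{i_1}$ together with all cells containing it corresponds to an elementary collapse, because every such cell is a free face: each stratum $S \subset E_{i_1}$ with $S \subset E_{i_1} \cap (\text{other components})$ maps isomorphically, via $f$, onto a stratum of $\Delta_Y^{=1}$ of one lower "level", and the incidence with $D_0$ guarantees the relevant pairing is free. For the flip case the exceptional locus has codimension $\geq 2$, so no divisor is contracted; here I would argue that $f$ induces a bijection on divisorial components of $\Delta^{=1}$ but may alter strata of codimension $\geq 2$, and again because all altered strata lie in $D_0$, the change in $\mathcal{D}$ is an expansion or collapse within the star of $D_0$, hence a simple-homotopy equivalence.

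I expect the main obstacle to be the careful verification that the change in the dual complex is genuinely an elementary collapse (or sequence of collapses) rather than merely a homotopy equivalence — that is, producing the explicit free faces. This requires tracking, for each stratum $S$ of $\Delta^{=1}$ on $X$, whether $f_* S$ is again a stratum of $\Delta_Y^{=1}$ of the expected codimension, whether two strata get identified, or whether a stratum disappears; and then organizing these changes so that they assemble into elementary collapses. The subtle case is when the exceptional locus of $f$ meets several components $E_i$ simultaneously, so that a whole sub-fan of the dual complex (the closed star of $D_0$, or part of it) gets modified at once; one must check that this modification can still be realized as a composition of elementary collapses, which is where the hypothesis $D_0 \cdot R > 0$ (as opposed to $D_0 \cdot R = 0$) is essential, since it ensures $D_0$ is a common face of all the affected cells and acts as the "apex" toward which everything collapses. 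Once this local picture is established, the conclusion that $\mathcal{D}(\Delta^{=1})$ and $\mathcal{D}(\Delta_Y^{=1})$ are simple-homotopy equivalent is immediate, since a collapse is by definition a simple-homotopy equivalence. I would also remark that the normality statement of Proposition~\ref{prop:dltstrata}(1) is used repeatedly to guarantee that "connected component" and "irreducible component" coincide, so that the cells of the dual complex are unambiguously defined on both sides.
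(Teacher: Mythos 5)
You are reconstructing a statement the paper does not actually prove but imports verbatim from \cite{dFKX17}*{Proposition 19}, so the comparison is with that source. Your overall strategy (identify the strata killed by $f$, relate them to $D_0$ via $D_0\cdot R>0$, and organize the removal into elementary collapses toward the vertex of $D_0$) is the right one, but the pivotal step as you state it is wrong: from ``every contracted curve meets $D_0$'' you conclude that ``every affected stratum is contained in $D_0$.'' What actually follows is only that every affected stratum has nonempty \emph{intersection} with $D_0$ (it is covered by curves of class $R$, each of which meets $D_0$); it need not lie inside $D_0$ --- the contracted divisor $E$ itself, when it is a component of $\Delta^{=1}$, is an affected stratum not contained in $D_0$. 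This is not a harmless slip, because the collapse in \cite{dFKX17} is built precisely on the dichotomy you have erased: the killed cells come in pairs $(\sigma_W,\sigma_{W\cap D_0})$, where $W$ runs over killed strata with $W\not\subset D_0$ and $W\cap D_0$ is the killed stratum of one higher codimension; $\sigma_W$ is a free face of $\sigma_{W\cap D_0}$ within the subcomplex being removed, and the elementary collapses are performed along these pairs. If all killed strata lay in $D_0$ there would be nothing to pair, and no reason for the removed subcomplex to admit a collapse at all.

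Two further points need genuine arguments that the sketch does not supply. First, for the pairing to exist one must show that for each killed stratum $W\not\subset D_0$ the intersection $W\cap D_0$ is nonempty, \emph{connected} (hence a single stratum by Proposition \ref{prop:dltstrata}), again killed, and that $W\mapsto W\cap D_0$ is a bijection onto the killed strata contained in $D_0$; the connectedness is not a consequence of the cone theorem but of adjunction to $W$ together with the Koll\'ar--Shokurov connectedness principle on the fibers of the contraction (compare Claim \ref{claim:majiwaru} in the proof of Lemma \ref{lem:MFS2}). Second, your case division by whether $E\subset\Delta^{=1}$ is not the right organizing principle: strata of $\Delta^{=1}$ can be contained in the exceptional locus and be killed even when the contracted divisor is not a component of $\Delta^{=1}$, so the two dual complexes need not be ``literally equal'' in that case. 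And in the flip case the hedge ``an expansion or collapse'' does not suffice --- the theorem asserts a collapse, so one must rule out the appearance of new strata on $Y$, which uses that the strict transform of $D_0$ is anti-ample over the base of the flip, forcing every positive-dimensional fiber of $Y\to Z$ into it.
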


\begin{lem}\label{lem:szabo}
Let $(X, \Delta)$ be a $\mathbb{Q}$-factorial pair over $k$ with the condition $(\star)$. 
Suppose that $(X, \Delta ^{\wedge 1})$ is dlt. 
Let $f: Y \to X$ be a log resolution of $(X, \Delta ^{\wedge 1})$ such that 
$a_E (X, \Delta ^{\wedge 1}) > 0$ holds for any $f$-exceptional divisor $E$. 
Let $\Delta _Y$ be the (not necessarily effective) $\mathbb{R}$-divisor defined by $K_Y + \Delta _Y = f^*(K_X + \Delta)$. 
Then the dual complexes $\mathcal{D}(\Delta ^{\ge 1})$ and $\mathcal{D}(\Delta _Y ^{\ge 1})$ are simple-homotopy equivalent. 
\end{lem}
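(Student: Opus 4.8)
The plan is to reduce to a single blow-up and then read off the change in the dual complex combinatorially. First I would use resolution of singularities under $(\star)$ --- in dimension three and characteristic $p>5$ this is where \cite{CP08} enters, and it is resolution, not the weak factorization theorem, that is used, which is why (unlike Proposition~\ref{prop:indep}) the argument persists in positive characteristic --- to reduce to the case that $f$ is the blow-up $\pi\colon Y\to X$ of a single smooth irreducible centre $Z$ in simple normal crossings position with $\operatorname{Supp}\Delta$. Concretely, after replacing $Y$ by a further blow-up one writes $f$ as a composition $Y=Y_n\to\cdots\to Y_0=X$ of such blow-ups, each in simple normal crossings position with the strict transform of the boundary and the exceptional locus created so far, with every extracted divisor still admissible (of positive log discrepancy over $(X,\Delta^{\wedge1})$), and inducts on $n$; that replacing $Y$ by $Y_n$ is harmless is itself an instance of the statement, applied over the \emph{log smooth} pair $(Y,\Delta_Y^{\wedge1})$, with the extra blow-ups arranged not to meet strata of $(\Delta_Y^{\wedge1})^{=1}$. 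This bookkeeping --- that every intermediate pair $(Y_i,(\Delta_{Y_i})^{\wedge1})$ stays $\mathbb Q$-factorial dlt and every extracted divisor stays admissible, all without weak factorization in characteristic $p$ --- is the step I expect to be the main obstacle.

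So let $\pi\colon Y\to X$ be one such blow-up, put $c=\operatorname{codim} Z\ge2$ and let $E$ be the exceptional divisor. The strict transform of each component of $\Delta^{\ge1}$ is again a component of $\Delta_Y^{\ge1}$, and $E$ is a component of $\Delta_Y^{\ge1}$ precisely when $\operatorname{mult}_Z\Delta\ge c$. The hypothesis $a_E(X,\Delta^{\wedge1})>0$ says exactly that the components of $\Delta^{\wedge1}$ through $Z$, counted with coefficients, have total weight $<c$; in particular $Z$ is never a stratum of $(\Delta^{\wedge1})^{=1}$, and if $E\in\Delta_Y^{\ge1}$ then between one and $c-1$ of the components of $\Delta^{\ge1}$ contain $Z$.

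If $E\notin\Delta_Y^{\ge1}$, then since $Z$ is not a stratum the strict transform of every stratum of $(\Delta^{\wedge1})^{=1}$ is an irreducible stratum and all incidences are preserved, so $\mathcal D(\Delta_Y^{\ge1})\cong\mathcal D(\Delta^{\ge1})$ as $\Delta$-complexes. If $E\in\Delta_Y^{\ge1}$, then the full subcomplex of $\mathcal D(\Delta_Y^{\ge1})$ on the vertices other than $E$ is identified with $\mathcal D(\Delta^{\ge1})$ by the same analysis, and $\mathcal D(\Delta_Y^{\ge1})$ is obtained from it by gluing in the closed star of the vertex $E$ along the link $L$ of $E$. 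Identifying $L$ via adjunction with the dual complex of the boundary restricted to $E$: as $E=\mathbb P(\mathcal N_{Z/X})$ is a projective bundle over the irreducible $Z$, the components of $\Delta^{\ge1}$ containing $Z$ restrict to sub-projectivizations of the fibres that intersect fibrewise like a flag of linear subspaces, and so span a non-empty full simplex in $L$, while the remaining components of $\Delta^{\ge1}$ restrict to restrictions of $E$ over smaller subvarieties of $Z$. Hence $L$ is the join of a non-empty simplex with a further complex; such a join is a cone, hence collapsible, so the closed star of $E$ collapses onto $L$ and $\mathcal D(\Delta_Y^{\ge1})\searrow\mathcal D(\Delta^{\ge1})$.

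In either case the dual complex changes only by an elementary collapse, so composing over the tower $Y_n\to\cdots\to Y_0=X$ gives the simple-homotopy equivalence of the lemma. One should also make precise, as in \cite{dFKX17}, the passage from ``$L$ is collapsible'' to ``$\mathcal D(\Delta_Y^{\ge1})\searrow\mathcal D(\Delta^{\ge1})$ is an elementary expansion'', since $\mathcal D(\Delta^{=1})$ is a regular $\Delta$-complex but need not be simplicial.
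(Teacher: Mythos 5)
There is a genuine gap at the very first step: the reduction of the given log resolution $f\colon Y\to X$ to a tower of blow-ups along smooth centres in simple normal crossings position. The variety $X$ is only $\mathbb{Q}$-factorial dlt, not log smooth, so the bottom of your tower does not even make sense as stated ("smooth irreducible centre $Z$ in simple normal crossings position with $\operatorname{Supp}\Delta$" presupposes $(X,\operatorname{Supp}\Delta)$ log smooth), and more importantly an arbitrary log resolution of a singular pair does not factor as a composition of such blow-ups. Producing \emph{some} log resolution as a composition of smooth blow-ups is what resolution of singularities gives; comparing it with the \emph{given} $f$ is exactly a weak/strong factorization problem, which is unavailable in characteristic $p$ (and is the precise reason the paper's Proposition~\ref{prop:indep} is restricted to characteristic zero). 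You flag this bookkeeping as "the main obstacle" but do not overcome it, and I do not see how to overcome it by resolution alone. Your single-blow-up analysis is essentially the paper's Lemma~\ref{lem:bup} (via \cite[9]{dFKX17}), which the paper indeed only deploys in the characteristic-zero Proposition~\ref{prop:indep}.

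The paper's proof of this lemma goes in the opposite direction and avoids factorization entirely: it runs a carefully chosen $(K_Y+\Omega)$-MMP over $X$, where $\Omega$ assigns coefficient $1$ to the exceptional divisors with $a_{F_i}(X,\Delta)\le 0$, coefficient $1-\epsilon$ to those with $a_{G_i}(X,\Delta)>0$, and subtracts a small multiple of a relatively anti-ample divisor. The hypothesis $a_E(X,\Delta^{\wedge1})>0$ for all $f$-exceptional $E$ plus the negativity lemma force every exceptional divisor to be contracted, and $\mathbb{Q}$-factoriality of $X$ forces the MMP to terminate at $X$ itself. Each step either satisfies the positivity hypothesis of Theorem~\ref{thm:inv_mmp} (\cite[Proposition~19]{dFKX17}), giving a collapse of dual complexes, or has exceptional locus inside some $G_i$ disjoint from the strata, so the dual complex is unchanged by \cite[Lemma~16]{dFKX17}. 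This is what makes the lemma valid under condition (ii) of $(\star)$, where the relevant threefold MMP exists. To repair your argument you would need either to substitute this MMP mechanism or to prove a factorization statement that is not currently known in positive characteristic.
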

\begin{proof}
Let $F = \sum F_i$ be the sum of the $f$-exceptional divisors $F_i$ with $a_{F_i} (X, \Delta) \le 0$, and 
let $G = \sum G_i$ be the sum of the $f$-exceptional divisors $G_i$ with $a_{G_i} (X, \Delta) > 0$. 
Let $\widetilde{\Delta}$ be the strict transform of $\Delta$ on $Y$. 
Let $H$ be an effective $\mathbb{R}$-divisor on $Y$ such that 
$\mathrm{Supp}\, H = \mathrm{Supp}\, (F+G)$ and that $-H$ is $f$-ample. 
We set an $\mathbb{R}$-divisor $\Omega$ as 
\[
\Omega = \widetilde{\Delta}^{\wedge 1} + F + (1 - \epsilon) G - \delta H 
\]
for sufficiently small $\epsilon, \delta > 0$. 
Since $-H$ is $f$-ample, there exists an effective ample $\mathbb{R}$-divisor $A$ on $Y$ such that 
$- \delta H \sim _{/X,\, \mathbb{R}} A$. 
We set 
\[
\overline{\Omega} = \widetilde{\Delta}^{\wedge 1} + F + (1 - \epsilon) G + A. 
\]
We may assume that $(Y, \overline{\Omega})$ is dlt. 
Note that $K_Y + \Omega \sim _{/X,\, \mathbb{R}} K_Y + \overline{\Omega}$ and $\mathcal{D}(\overline{\Omega} ^{=1}) = \mathcal{D}(\Delta _Y ^{\ge 1})$. 

Since $A$ is ample, there exists an $\mathbb{R}$-divisor $\overline{\Omega}'$ such that 
$\overline{\Omega}' \sim _{\mathbb{R}} \overline{\Omega}$ and $(Y, \overline{\Omega}')$ is klt. 
Hence, we may run a $(K_Y + \Omega)$-MMP over $X$ and it terminates. 
First, we prove that this MMP ends with $X$. 
\begin{align*}
K_Y + \Omega 
& \sim _{/X,\, \mathbb{R}} K_Y + \Omega - f^*(K_X + \Delta^{\wedge 1}) \\
&= \sum a_i F_i + \sum b_i G_i - \epsilon G - \delta H, 
\end{align*}
where $a_i = a_{F_i} (X, \Delta ^{\wedge 1})$ and $b_i = a_{G_i} (X, \Delta ^{\wedge 1})$. 
Since $a_i, b_i > 0$ and $\epsilon$ and $\delta$ are sufficiently small, 
it follows that 
\[
\mathrm{coeff}_E \left( \sum a_i F_i + \sum b_i G_i - \epsilon G - \delta H \right) > 0
\] 
for any $f$-exceptional divisor $E$. 
Hence by the negativity lemma, all the divisors $F_i$'s and $G_i$'s are contracted in this MMP. 
Since $X$ is $\mathbb{Q}$-factorial, this MMP ends with $X$. 

Let $Y_j \dasharrow Y_{j+1}$ be the step of the $(K_Y + \Omega)$-MMP over $X$, and 
let $R$ be the corresponding extremal ray, and $Y_j \to Z_j$ be its contraction. 
For a divisor $D$ on $Y$, we denote $D_{Y_j}$ the strict transform of $D$ on $Y_j$. 
We also denote $g: Y_j \to X$ the induced morphism. 
Then, 
\begin{align*}
&  K_{Y_j} + \Omega _{Y_j} \\
\sim & _{/X,\, \mathbb{R}} K_{Y_j} + \Omega _{Y_j} - g^*(K_X + \Delta) \\
= & - \left( \widetilde{\Delta}_{Y_j} - (\widetilde{\Delta} _{Y_j}) ^{\wedge 1} \right) + 
\sum a' _i F_{i, Y_j} + \sum b' _i G_{i, Y_j} - \epsilon G_{Y_j} - \delta H_{Y_j}, 
\end{align*}
where $a' _i = a_{F_i} (X, \Delta)$ and $b' _i = a_{G_i} (X, \Delta)$. 
Since $a' _i \le 0$, it follows that 
\[
\mathrm{coeff}_{F_{i, Y_j}} \left( \sum a' _i F_{i, Y_j} + \sum b' _i G_{i, Y_j} - \epsilon G_{Y_j} - \delta H_{Y_j} \right) < 0. 
\]
Since $b' _i > 0$ and $\epsilon$ and $\delta$ are sufficiently small, it follows that 
\[
\mathrm{coeff}_{G_{i, Y_j}} \left( \sum a' _i F_{i, Y_j} + \sum b' _i G_{i, Y_j} - \epsilon G_{Y_j} - \delta H_{Y_j} \right) > 0. 
\]
Since $(K_{Y_j} + \Omega_{Y_j}) \cdot R < 0$, at least one of the following conditions hold: 
\begin{enumerate}
\item $D \cdot R > 0$ holds for some component $D \subset \mathrm{Supp} (\widetilde{\Delta} _{Y_j} ^{>1})$. 
\item $F_{i, Y_j} \cdot R > 0$ for some $i$. 
\item $G_{i, Y_j} \cdot R < 0$ for some $i$.
\end{enumerate}
Here, we have $K_{Y_j} + \Omega _{Y_j} \sim _{/X,\, \mathbb{R}} K_{Y_j} + \overline{\Omega} _{Y_j}$ and 
that a component of $\overline{\Omega} _{Y_j} ^{=1}$ is one of $F_{i, Y_j}$'s or 
a component of $(\widetilde{\Delta} _{Y_j}) ^{\ge 1}$. 
Hence in the case (1) or (2), by Theorem \ref{thm:inv_mmp}, 
the dual complexes $\mathcal{D}((\overline{\Omega} _{Y_j}) ^{=1})$ and 
$\mathcal{D}((\overline{\Omega} _{Y_{j+1}}) ^{=1})$ are simple-homotopy equivalent. 
In the case (3), the exceptional locus $L$ of $Y_j \to Z_j$ is contained in $\mathrm{Supp} (G_{i, Y_j})$. 
Since $(Y_j, \overline{\Omega} _{Y_j})$ is dlt, any stratum of $(\overline{\Omega} _{Y_j}) ^{=1}$ 
is not contained in $\mathrm{Supp} (G_{i, Y_j})$ and neither in $L$. 
Hence in the case (3), by \cite[Lemma 16]{dFKX17}, it follows that 
$\mathcal{D}((\overline{\Omega} _{Y_j}) ^{=1}) = \mathcal{D}((\overline{\Omega} _{Y_{j+1}}) ^{=1})$. 

Hence by induction on $j$, the dual complexes $\mathcal{D}(\overline{\Omega} ^{=1})$ and 
$\mathcal{D}((\overline{\Omega} _X) ^{=1})$ are simple-homotopy equivalent. 
Since $\mathcal{D}(\overline{\Omega} ^{=1}) = \mathcal{D}(\Delta _Y ^{\ge 1})$ and 
$\mathcal{D}((\overline{\Omega} _X) ^{=1}) = \mathcal{D}(\Delta ^{\ge 1})$, 
it follows that $\mathcal{D}(\Delta _Y ^{\ge 1})$ and $\mathcal{D}(\Delta ^{\ge 1})$ are homopoty equivalent. 
\end{proof}

\begin{lem}\label{lem:bup}
Let $(X, \Delta)$ be a sub log pair over $k$ with the condition $(\star)$ such that $(X, \mathrm{Supp}\ \Delta)$ is log smooth. 
Let $Z$ be a smooth irreducible subvariety of $X$ which has only simple normal 
crossing with $\mathrm{Supp}\, \Delta$. 
Let $f : Y \to X$ be the blow up along $Z$, and 
let $\Delta _Y$ be the $\mathbb{R}$-divisor defined by $K_Y + \Delta _Y = f^*(K_X + \Delta)$.
Then the dual complexes $\mathcal{D}(\Delta ^{\ge 1})$ and $\mathcal{D}(\Delta _Y ^{\ge 1})$ are simple-homotopy equivalent. 
\end{lem}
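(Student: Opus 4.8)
The plan is to reduce the blow-up statement to Lemma~\ref{lem:szabo} by a careful bookkeeping of the coefficients of $\Delta_Y$ in terms of the codimension of $Z$ and the multiplicity of $\Delta$ along $Z$. Write $f\colon Y\to X$ for the blow-up along the smooth center $Z$, let $E$ be the exceptional divisor, set $c=\operatorname{codim}_X Z\ge 2$, and let $m=\operatorname{mult}_Z \Delta$ be the sum of the coefficients of the components of $\Delta$ containing $Z$. The standard formula gives
\[
K_Y+\Delta_Y = f^*(K_X+\Delta),\qquad \operatorname{coeff}_E \Delta_Y = m - (c-1),
\]
and the coefficients of all other components of $\Delta_Y$ are unchanged (they are the coefficients of the strict transforms of the components of $\Delta$). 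So everything is controlled by the single number $a_E(X,\Delta)=c-m$, i.e.\ whether $m<c-1$, $m=c-1$, or $m>c-1$.

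First I would dispose of the easy regime. If $a_E(X,\Delta)>0$, i.e.\ $m<c-1$, then $f$ is a log resolution of $(X,\Delta^{\wedge 1})$ of the type considered in Lemma~\ref{lem:szabo} (the only exceptional divisor $E$ has positive log discrepancy), so the conclusion is immediate from that lemma. If $a_E(X,\Delta)=0$, i.e.\ $m=c-1$, then $\operatorname{coeff}_E\Delta_Y=0$, so $E$ does not appear in $\Delta_Y^{\ge 1}$ and in fact $\mathcal{D}(\Delta_Y^{\ge 1})=\mathcal{D}(\Delta^{\ge 1})$ on the nose: the strata of $(\Delta_Y^{\wedge 1})^{=1}$ are exactly the strict transforms of the strata of $(\Delta^{\wedge 1})^{=1}$, because $E$ meets the round-up of $\Delta_Y$ only along divisors that came from components already containing $Z$, and blowing up a smooth center that is snc with $\operatorname{Supp}\Delta$ induces a bijection on these strata (here one uses that $Z$ has simple normal crossing with $\operatorname{Supp}\Delta$, so $Z$ is not itself a stratum closure unless the relevant multiplicity works out, and in any case the combinatorics of the intersection poset is unchanged). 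One should check carefully that no new high-codimension stratum is created and none is destroyed; this is a direct local computation in the log-smooth chart.

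The main case, and the main obstacle, is $a_E(X,\Delta)<0$, i.e.\ $m>c-1$, where $\operatorname{coeff}_E\Delta_Y>0$ and $E$ genuinely enters $\Delta_Y^{\wedge 1}$, possibly with coefficient $1$ (when $m\ge c$) or strictly between $0$ and $1$ (when $c-1<m<c$). The plan here is \emph{not} to invoke Lemma~\ref{lem:szabo} directly — that lemma assumes all exceptional divisors have positive log discrepancy — but to run the same MMP argument as in its proof, or better, to massage $(Y,\Delta_Y^{\wedge 1})$ so that Theorem~\ref{thm:inv_mmp} and \cite[Lemma~16]{dFKX17} apply to the contraction $f\colon Y\to X$ itself. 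Concretely: $(Y,\Delta_Y^{\wedge 1})$ is $\mathbb{Q}$-factorial dlt (it is log smooth), $f$ contracts only $E$, and over a neighbourhood of the generic point of $Z$ the morphism $f$ is $(K_Y+\Delta_Y^{\wedge 1})$-positive or negative according to the sign of $\operatorname{coeff}_E\Delta_Y-(m-(c-1))$-type comparisons; one runs a $(K_Y+\Omega)$-MMP over $X$ for $\Omega=\Delta_Y^{\wedge 1}-\delta E$ (or $+\delta E$) with $\delta$ small, exactly as in Lemma~\ref{lem:szabo}, ending at $X$, and at the single relevant step either $E$ has positive intersection with the contracted ray — so Theorem~\ref{thm:inv_mmp} gives a collapse $\mathcal{D}((\Delta_Y^{\wedge 1})^{=1})\searrow\mathcal{D}((\Delta^{\wedge 1})^{=1})$ — or the exceptional locus avoids all strata and \cite[Lemma~16]{dFKX17} gives equality. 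The delicate point is the borderline subcase $c-1<m<c$, where $E$ appears in $\Delta_Y$ with non-integral coefficient and hence is a component of $\Delta_Y^{\wedge 1}$ but \emph{not} of $(\Delta_Y^{\wedge 1})^{=1}$, so it contributes no vertex to the dual complex; one must verify that $E\notin(\Delta_Y^{\wedge 1})^{=1}$ means the dual complex again does not change, using that the strata of $(\Delta_Y^{\wedge 1})^{=1}$ are supported away from $E$. I expect the bulk of the write-up to be this case analysis on $m$ versus $c$ together with the verification that in each case the hypotheses of Theorem~\ref{thm:inv_mmp} or of \cite[Lemma~16]{dFKX17} are met for the divisorial contraction $f$, the genuine subtlety being only that $f$ need not be $(K_Y+\Delta_Y^{\wedge 1})$-negative, which is why one perturbs by $\pm\delta E$ and argues via the auxiliary MMP as in the proof of Lemma~\ref{lem:szabo}.
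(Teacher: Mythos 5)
Your coefficient bookkeeping ($\operatorname{coeff}_E\Delta_Y=m-(c-1)$, all other coefficients unchanged) is correct, and the cases where $E$ does not enter $F_Y:=(\Delta_Y^{\ge 1})^{\wedge 1}$ — namely $m<c$ — do reduce to the identity $\mathcal{D}(\widetilde F)=\mathcal{D}(F)$ for $F:=(\Delta^{\ge 1})^{\wedge 1}$ (routing the subcase $m<c-1$ through Lemma \ref{lem:szabo} is overkill, and that lemma is stated for effective $\Delta$ while here $\Delta$ is only a sub-boundary, but the underlying fact is immediate). The gap is in the main case $m\ge c$. Since $Z$ is snc with $\operatorname{Supp}\Delta$, it lies on at most $c$ components of $\operatorname{Supp}\Delta$, so $m\ge c$ forces $Z$ to lie on some component of $F$; let $j$ be the number of components of $F$ containing $Z$. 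Then $K_Y+\widetilde F+E=f^*(K_X+F)+(c-j)E$, so when $j=c$ — which happens precisely when $Z$ is a stratum of $F$ — the contraction $f$ is \emph{crepant} for $(Y,\widetilde F+E)\to(X,F)$, and it cannot be made $(K_Y+\Omega)$-negative for any dlt $\Omega$ with $\Omega^{=1}=\widetilde F+E$: one would need an effective $\Theta$ sharing no component with $\widetilde F+E$ and satisfying $\Theta\cdot R<0$ for the contracted ray $R$, which forces $E\subset\operatorname{Supp}\Theta$. Your proposed perturbation $\Omega=\Delta_Y^{\wedge 1}\pm\delta E$ destroys exactly what you need: it moves the coefficient of $E$ off $1$, so $\Omega^{=1}$ no longer equals $F_Y$ and Theorem \ref{thm:inv_mmp} would compare $\mathcal{D}(\widetilde F)$ with $\mathcal{D}(F)$ rather than the complex at issue. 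Worse, in the stratum case the conclusion of Theorem \ref{thm:inv_mmp} is of the wrong shape: $\mathcal{D}(\widetilde F+E)$ is the stellar subdivision of $\mathcal{D}(F)$ at the cell corresponding to $Z$, which has strictly more cells and hence does not collapse onto $\mathcal{D}(F)$; it is only PL homeomorphic to it (which is still a simple-homotopy equivalence, so the lemma is true — but not by your route). Also note $E\cdot R<0$ always, so your alternative ``$E$ has positive intersection with the contracted ray'' never occurs.

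The missing ingredient is therefore combinatorial rather than MMP-theoretic: one needs that blowing up a stratum of $F$ subdivides $\mathcal{D}(F)$ (a PL homeomorphism), and that blowing up a center contained in $\operatorname{Supp} F$ but not a stratum attaches the new vertex $E$ along a collapsible subcomplex (a simple-homotopy equivalence). This is \cite[9]{dFKX17}, and the paper's proof is precisely the three-way split on the position of $Z$ relative to $\operatorname{Supp} F$ (stratum / contained but not a stratum / not contained), quoting that result in the first two cases. Your MMP realization of $f$ does work in the intermediate regime $Z\subset\operatorname{Supp} F$, $j<c$, $m\ge c$, where $f$ is genuinely $(K_Y+\widetilde F+E)$-negative and some $\widetilde D$ with $Z\subset D\subset F$ satisfies $\widetilde D\cdot R>0$; that gives a self-contained alternative to \cite[9]{dFKX17} for that case only, but it cannot cover the stratum case.
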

\begin{proof}
Set $F = (\Delta ^{\ge 1})^{\wedge 1}$ and $F_Y = (\Delta_Y ^{\ge 1})^{\wedge 1}$. 
Let $E$ be the $f$-exceptional divisor and $\widetilde{F}$ be the strict transform of $F$ on $Y$. 
We divide the case as follows: 
\begin{enumerate}
\item $Z$ is a stratum of $F$. 
\item $Z \subset \mathrm{Supp}\, F$ but $Z$ is not a stratum of $F$.
\item $Z \not \subset \mathrm{Supp}\, F$ (in particular, $Z$ is not a stratum of $F$).
\end{enumerate}

Suppose (1). Then $F_Y = \widetilde{F} + E$ holds. 
On the other hand, $\mathcal{D}(\widetilde{F} + E)$ and $\mathcal{D}(F)$ are PL homeomorphic each other by \cite[9]{dFKX17}. 

Suppose (2). Then $F_Y = \widetilde{F}$ or $F_Y = \widetilde{F} + E$ holds. 
On the other hand, $\mathcal{D}(\widetilde{F}) = \mathcal{D}(F)$ holds and 
$\mathcal{D}(\widetilde{F} + E)$ and $\mathcal{D}(F)$ are simple-homotopy equivalent by \cite[9]{dFKX17}. 

Suppose (3). Then $F_Y = \widetilde{F}$ holds and $\mathcal{D}(\widetilde{F}) = \mathcal{D}(F)$. 
\end{proof}

\begin{prop}\label{prop:indep}
Let $(X, \Delta)$ be a pair over $k$ with condition (i) in $(\star)$ and 
let $f_1: Y_1 \to X$ and $f_2: Y_2 \to X$ be two dlt blow-ups of $(X, \Delta)$. 
Define $\mathbb{R}$-divisors $\Delta _{Y_i}$ by 
$K_{Y_i} + \Delta _{Y_i} = f_i ^* (K_X + \Delta)$. 
Then the dual complexes $\mathcal{D}(\Delta _{Y_1} ^{\ge 1})$ and 
$\mathcal{D}(\Delta _{Y_2} ^{\ge 1})$ are simple-homotopy equivalent. 
\end{prop}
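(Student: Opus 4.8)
The plan is to use the weak factorization theorem to compare the two dlt blow-ups through a sequence of blow-ups and blow-downs along smooth centers, reducing everything to the invariance statements already established in Lemmas \ref{lem:szabo} and \ref{lem:bup}. First I would choose a common log resolution: pick a log resolution $W \to X$ of $(X, \Delta)$ that dominates both $Y_1$ and $Y_2$; by Remark \ref{rmk:dltbup}(2) we may in fact assume that $W$ is obtained so that the induced maps $W \dashrightarrow Y_i$ contract no divisors, but more importantly, applying Lemma \ref{lem:szabo} to the birational morphism $W \to X$ (after possibly blowing up further so that all exceptional divisors with nonpositive log discrepancy are handled) we get that $\mathcal{D}(\Delta_{Y_i}^{\ge 1})$ is simple-homotopy equivalent to $\mathcal{D}(\Delta_W^{\ge 1})$ — provided we can realize each $Y_i$ as (the pushforward along) an MMP-type contraction from $W$. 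Concretely, I would instead run the argument of Lemma \ref{lem:szabo} directly: it shows that for a log resolution $f: W \to X$ with the property that every $f$-exceptional divisor of positive log discrepancy can be contracted, $\mathcal{D}(\Delta_W^{\ge 1})$ and $\mathcal{D}(\Delta^{\ge 1})$ are simple-homotopy equivalent, and the same applies with $X$ replaced by $Y_i$ and $\Delta$ by $\Delta_{Y_i}^{\wedge 1}$, since $(Y_i, \Delta_{Y_i}^{\wedge 1})$ is $\mathbb{Q}$-factorial dlt. So it suffices to find a single smooth model $W$ dominating both $Y_1$ and $Y_2$ through morphisms to which Lemma \ref{lem:szabo} applies.

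Here is where the weak factorization theorem \cite{AKMW02} enters, and it is the crux of the argument. The birational map $Y_1 \dashrightarrow Y_2$ over $X$ is an isomorphism over $X \setminus \mathrm{Nklt}$-type loci but in general not a morphism, so we cannot directly dominate. Weak factorization gives a chain of varieties $Y_1 = W_0 \dashrightarrow W_1 \dashrightarrow \cdots \dashrightarrow W_n = Y_2$ over $X$, where each $W_j \dashrightarrow W_{j+1}$ is either a blow-up or a blow-down along a smooth irreducible center, and — crucially — we may arrange (by the functoriality/log-smoothness refinements of weak factorization, e.g.\ the version respecting a simple normal crossing divisor) that each $W_j$ carries the crepant transform $\Delta_{W_j}$ of $\Delta$, that $(W_j, \mathrm{Supp}\,\Delta_{W_j})$ is log smooth away from where it matters, and that the blow-up centers meet the boundary transversally. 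Then Lemma \ref{lem:bup} applies at each step to give $\mathcal{D}(\Delta_{W_j}^{\ge 1}) \sim \mathcal{D}(\Delta_{W_{j+1}}^{\ge 1})$ (simple-homotopy equivalent), and composing along the chain yields $\mathcal{D}(\Delta_{Y_1}^{\ge 1}) \sim \mathcal{D}(\Delta_{Y_2}^{\ge 1})$ once we interpolate at the endpoints using Lemma \ref{lem:szabo} to pass from the (non-log-smooth, merely dlt) $Y_i$ to log-smooth models. Actually cleaner: first replace $Y_1$ by a log resolution $\widetilde{Y_1} \to Y_1$ of $(Y_1, \Delta_{Y_1}^{\wedge 1})$ obtained by blow-ups with positive-discrepancy exceptional divisors, so Lemma \ref{lem:szabo} gives $\mathcal{D}(\Delta_{Y_1}^{\ge 1}) \sim \mathcal{D}(\Delta_{\widetilde{Y_1}}^{\ge 1})$; do the same for $Y_2$; and then apply weak factorization between the two log-smooth models $\widetilde{Y_1}$ and $\widetilde{Y_2}$ over $X$.

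The main obstacle I anticipate is bookkeeping the divisor $\Delta$ through the weak factorization: the strict form of \cite{AKMW02} factors a birational map of smooth varieties, but I need the factorization to be compatible with the $\mathbb{R}$-divisor $\Delta$ in the strong sense that at every intermediate stage $(W_j, \mathrm{Supp}\,\Delta_{W_j})$ is log smooth and the blow-up center $Z_j$ has simple normal crossing with $\mathrm{Supp}\,\Delta_{W_j}$ — exactly the hypotheses of Lemma \ref{lem:bup}. The "toroidal" / "compatible with a normal crossing divisor" versions of weak factorization do provide this (one includes $\mathrm{Supp}\,\Delta$ and the exceptional loci of $\widetilde{Y_i} \to X$ as part of the boundary data), but one must check that the crepant pullback $\Delta_{W_j}$ — which may have coefficients $>1$ and may be non-effective on non-exceptional components — transforms the way Lemma \ref{lem:bup} demands, i.e.\ that the coefficient computation of $E$ in $\Delta_{W_{j+1}}$ under a blow-up along $Z_j$ only ever moves the class $\mathcal{D}(\Delta^{\ge 1})$ within its simple-homotopy type. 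Since Lemma \ref{lem:bup} already encapsulates precisely this coefficient analysis (via its three cases), the remaining work is to verify that weak factorization can be run inside the log-smooth category relative to $X$ and relative to $\mathrm{Supp}\,\Delta$; this is also precisely why the argument fails in positive characteristic, as noted in the introduction. Once this compatibility is in hand, the proof is just a concatenation of simple-homotopy equivalences.
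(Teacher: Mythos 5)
Your proposal is correct and follows essentially the same route as the paper: pass from each $Y_i$ to a log resolution $W_i$ via Lemma \ref{lem:szabo}, then connect $W_1$ and $W_2$ by the (SNC-compatible) weak factorization theorem and apply Lemma \ref{lem:bup} at each blow-up/blow-down. The compatibility issues you flag are exactly what the paper's phrase ``a sequence of blow up as in Lemma \ref{lem:bup}'' absorbs.
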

\begin{proof}
The same proof of \cite[Proposition 11]{dFKX17} works. 
For the reader's convenience, we give a sketch of proof. 

By definition of dlt pairs, we can take a log resolution $g_i: W_i \to Y_i$ of $(Y_i, \Delta _{Y_i})$ such that 
$a_E (Y_i, \Delta _{Y_i}^{\wedge 1}) > 0$ holds for any $g_i$-exceptional divisor $E$. 
Define $\mathbb{R}$-divisors $\Delta _{W_i}$ on $W_i$ by $K_{W_i} + \Delta _{W_i} = g_i ^* (K_{Y_i} + \Delta _{Y_i})$. 
By Lemma \ref{lem:szabo}, the dual complexes $\mathcal{D} (\Delta _{Y_i} ^{\ge 1})$ and 
$\mathcal{D} (\Delta _{W_i} ^{\ge 1})$ are simple-homotopy equivalent. 
By the weak factorization theorem \cite{AKMW02}, the pairs $(W_1, \Delta _{W_1})$ and 
$(W_2, \Delta _{W_2})$ can be connected by a sequence of blow up as in Lemma \ref{lem:bup}. 
Hence $\mathcal{D}(\Delta _{W_1} ^{\ge 1})$ and $\mathcal{D}(\Delta _{W_2} ^{\ge 1})$ are simple-homotopy equivalent. 
\end{proof}

\begin{rmk}\label{rmk:independence}
\begin{enumerate}
\item 
In the proof of this proposition, the weak factorization theorem \cite{AKMW02} is used. 
So, the proof does not work in positive characteristic even in dimension three. 

\item If $(X, \Delta)$ is log canonical in this proposition, then
$\mathcal{D}(\Delta _{Y_1} ^{\ge 1})$ and 
$\mathcal{D}(\Delta _{Y_2} ^{\ge 1})$ are PL homeomorphic each other (\cite[Proposition 11]{dFKX17}). 
\end{enumerate}
\end{rmk}

\subsection{Results on the Witt vector cohomologies}
For the definition of the Witt vector cohomology and its basic properties, we refer to \cite{GNT} and \cite{CR12}. 
The following vanishing theorem of Nadel type will be used in this paper. 

\begin{thm}[{\cite[Theorem 4.10]{NT}}]\label{thm:WNV}
Let $(X, \Delta)$ be a projective log pair over a perfect field $k$ with condition (ii) in $(\star)$. 
Then 
\[
H^i(X, WI_{\mathrm{Nklt}(X, \Delta), \mathbb{Q}}) = 0
\]
holds for $i >0$, where $\mathrm{Nklt}(X, \Delta)$ denotes the reduced closed subscheme of $X$ consisting of the non-klt points of $(X, \Delta)$ 
and $I_{\mathrm{Nklt}(X, \Delta)}$ is the coherent ideal sheaf on $X$ corresponding to $\mathrm{Nklt}(X, \Delta)$. 
\end{thm}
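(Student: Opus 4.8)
The plan is to replace $X$ by a dlt blow-up on which the non-klt locus becomes (the support of) a dlt boundary, to separate the Witt vector cohomology of the total space from that of this boundary via the ideal-sheaf sequence, and to evaluate the boundary contribution through its stratification, which the contractibility of the dual complex controls. The essential positivity hypothesis --- that $-(K_X+\Delta)$ is nef and big, as in \cite[Theorem 4.10]{NT} --- is used to guarantee that the varieties produced along the way are of Fano type; without it the assertion already fails (e.g.\ for abelian threefolds).

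First I would take, combining Theorem~\ref{thm:dltmodif} with Theorem~\ref{thm:main1'}, a dlt blow-up $g\colon (V,\Delta_V)\to (X,\Delta)$ such that $g^{-1}(\mathrm{Nklt}(X,\Delta))=\mathrm{Nklt}(V,\Delta_V)=\mathrm{Supp}\,\Delta_V^{\ge 1}=:E$ and such that the dual complex $\mathcal{D}(\Delta_V^{\ge 1})$ is contractible. To descend the vanishing from $V$ to $X$ one needs a Grauert--Riemenschneider--type statement for Witt vector cohomology, namely $Rg_*WI_{E,\mathbb{Q}}\cong WI_{\mathrm{Nklt}(X,\Delta),\mathbb{Q}}$. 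This is obtained by pushing the ideal-sheaf sequence forward by $g$, using that $g$ extracts only non-klt places and that klt (hence dlt) singularities in our range are Witt-rational --- after Berthelot--Bloch--Esnault and Chatzistamatiou--R\"{u}lling (\cite{CR12}) --- so that $Rg_*W\mathcal{O}_{V,\mathbb{Q}}\cong W\mathcal{O}_{X,\mathbb{Q}}$ and the analogous statement holds for the induced morphism $E\to\mathrm{Nklt}(X,\Delta)$. Granting this, $H^i(X,WI_{\mathrm{Nklt}(X,\Delta),\mathbb{Q}})\cong H^i(V,WI_{E,\mathbb{Q}})$ for all $i$.

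On $V$ I would use the short exact sequence
\[
0\longrightarrow WI_{E,\mathbb{Q}}\longrightarrow W\mathcal{O}_{V,\mathbb{Q}}\longrightarrow W\mathcal{O}_{E,\mathbb{Q}}\longrightarrow 0
\]
and chase the long exact cohomology sequence, so that the claim reduces to: (a)~$H^i(V,W\mathcal{O}_{V,\mathbb{Q}})=0$ for $i>0$; (b)~$H^i(E,W\mathcal{O}_{E,\mathbb{Q}})=0$ for $i\ge 1$; and (c)~surjectivity of $H^0(V,W\mathcal{O}_{V,\mathbb{Q}})\to H^0(E,W\mathcal{O}_{E,\mathbb{Q}})$. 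For (a): $V$ is birational to the Fano-type variety $X$, which is rationally connected since $\dim X=3$ and $\mathrm{ch}(k)>5$ (by the three-dimensional MMP and \cite{GNT}), so on a smooth projective model the slope-$<1$ part of rigid cohomology vanishes in positive degrees by Esnault's theorem \cite{Esn03}, and this equals $H^i(\,\cdot\,,W\mathcal{O}_{\mathbb{Q}})$ (Berthelot--Bloch--Esnault); it passes down to $V$ because its singularities are Witt-rational. For (b) and (c): by Proposition~\ref{prop:dltstrata} every stratum $E_S$ of the reduced boundary $E=(\Delta_V^{\wedge 1})^{=1}$ is a normal projective variety, and by adjunction $(E_S,\Delta_{E_S})$ is log canonical with $-(K_{E_S}+\Delta_{E_S})$ nef and big; hence $E_S$ is again of Fano type of dimension $\le 2$, so by (a), $H^q(E_S,W\mathcal{O}_{E_S,\mathbb{Q}})=0$ for $q>0$ and $=W(k)_{\mathbb{Q}}$ for $q=0$. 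Feeding this into the \v{C}ech spectral sequence attached to the cover of $E$ by its irreducible components --- which resolves $W\mathcal{O}_{E,\mathbb{Q}}$ --- the spectral sequence collapses and yields $H^n(E,W\mathcal{O}_{E,\mathbb{Q}})\cong H^n\bigl(\mathcal{D}(\Delta_V^{\ge 1});W(k)_{\mathbb{Q}}\bigr)$, the simplicial cohomology of the dual complex. Since $\mathcal{D}(\Delta_V^{\ge 1})$ is contractible by construction, the right-hand side is $0$ for $n\ge 1$ and $W(k)_{\mathbb{Q}}$ for $n=0$; this gives (b) and (c), and the long exact sequence then yields the theorem. (When $\mathrm{Nklt}(X,\Delta)=\emptyset$ only (a) is needed.)

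The main obstacle is the characteristic-$p$ geometric input rather than the cohomological bookkeeping, which is formal. The Grauert--Riemenschneider-type isomorphism $Rg_*WI_{E,\mathbb{Q}}\cong WI_{\mathrm{Nklt}(X,\Delta),\mathbb{Q}}$, and the Witt-rationality of klt and dlt singularities, are available only for $\dim\le 3$ and $\mathrm{ch}(k)>5$ because they ultimately rest on the MMP in that range; producing a single dlt blow-up that simultaneously satisfies condition~(3) of Theorem~\ref{thm:dltmodif} and has a contractible dual complex requires combining the two constructions carefully; and the adjunction step --- keeping every stratum $E_S$ out of the augmented base locus of $-(K_X+\Delta)$ so that the restricted divisor stays nef and big --- needs a delicate perturbation within the Ambro--Fujino framework. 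Once these points are settled, the argument proceeds as above.
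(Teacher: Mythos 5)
First, a structural point: this paper does not prove Theorem \ref{thm:WNV} at all. It is quoted from \cite[Theorem 4.10]{NT} and used as a black box; in particular it is an \emph{input} to Theorem \ref{thm:WAFV}, not a consequence of the dual-complex results. So there is no internal proof to compare yours against, and your argument has to stand on its own. (You are right that the statement as printed omits the positivity hypothesis on $-(K_X+\Delta)$ that is present in \cite{NT}; without it the claim is false already for $\Delta=0$.)

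As a free-standing proof, two of your three reduction steps fail, and they fail on the same standard example. Take $X$ to be the projective cone over a polarized abelian surface $(A,L)$: this is a strictly log canonical Fano threefold whose non-klt locus is the vertex, and a dlt blow-up $V$ is a $\mathbb{P}^1$-bundle over $A$ with $E\cong A$. Then your (a) fails: $H^1(V,W\mathcal{O}_{V,\mathbb{Q}})\cong H^1(A,W\mathcal{O}_{A,\mathbb{Q}})\neq 0$, because the slope-$<1$ part of $H^1_{\mathrm{crys}}(A)$ is nonzero. Your justification of (a) breaks because $X$ is only log canonical, hence not of Fano type (that notion requires a klt boundary), it is not rationally connected, and it is not Witt-rational along its lc centers, so $H^i(X,W\mathcal{O}_{X,\mathbb{Q}})$ and $H^i(V,W\mathcal{O}_{V,\mathbb{Q}})$ genuinely differ. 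Your (b) fails for the same reason: on any $g$-exceptional stratum $E_S$ the divisor $-(K_{E_S}+\Delta_{E_S})$ is pulled back from $g(E_S)$, hence nef of numerical dimension at most $\dim g(E_S)<\dim E_S$ and never big; over an isolated lc center the pair $(E_S,\Delta_{E_S})$ is log Calabi--Yau (here $E=A$ itself), so the strata are not of Fano type and the Mayer--Vietoris identification of $H^*(E,W\mathcal{O}_{E,\mathbb{Q}})$ with the cohomology of the dual complex is unavailable. The theorem is nevertheless true for this $X$ precisely because the nonzero classes in $H^1(V,W\mathcal{O}_{V,\mathbb{Q}})$ and $H^1(E,W\mathcal{O}_{E,\mathbb{Q}})$ cancel in the long exact sequence of the ideal sheaf; the whole point of the ideal-sheaf formulation is that it survives even though neither (a) nor (b) holds. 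Finally, the Grauert--Riemenschneider-type isomorphism $Rg_*WI_{E,\mathbb{Q}}\cong WI_{\mathrm{Nklt}(X,\Delta),\mathbb{Q}}$ on which you base the descent from $V$ to $X$ is asserted rather than proved, and is essentially a relative form of the theorem itself; and your appeal to $H^i(X,W\mathcal{O}_{X,\mathbb{Q}})=0$ is an appeal to Theorem \ref{thm:WAFV}, which the paper deduces \emph{from} Theorem \ref{thm:WNV}, so the logic would be circular even if the individual steps held.
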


\section{Dual complex of weak Fano varieties}

\subsection{Dual complex of dlt pairs with a Mori fiber space structure}

\begin{lem}\label{lem:MFS2}
Let $(X, \Delta)$ be a $\mathbb{Q}$-factorial dlt pair over $k$ with condition $(\star)$ and 
let $f: X \to Z$ be a projective surjective $k$-morphism to a quasi-projective $k$-scheme $Z$ such that 
\begin{enumerate}
\item 
$\dim X >\dim Z$, 
\item 
$f$ has connected fibers, 
\item 
$-(K_X+\Delta)$ is $f$-ample, and 
\item 
there exists an irreducible component $D_0$ of $\Delta ^{=1}$ 
such that $D_0$ is $f$-ample. 
\end{enumerate}
Then $\mathcal{D}(\Delta ^{=1})$ is contractible.
\end{lem}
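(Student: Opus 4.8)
The plan is to reduce the lemma to Theorem~\ref{thm:inv_mmp} by running a carefully chosen MMP that successively contracts the divisors meeting the fibers of $f$, and to run an induction on $\dim X - \dim Z$ together with $\dim X$. Here is the outline. Since $-(K_X+\Delta)$ is $f$-ample and $\dim X > \dim Z$, by cone theory there is a $(K_X+\Delta)$-negative extremal ray $R$ over $Z$; the associated contraction $X \to W$ over $Z$ is of fiber type, divisorial, or flipping. The first key observation is that, because $D_0$ is $f$-ample, we have $D_0 \cdot C > 0$ for every curve $C$ contracted over $Z$, in particular $D_0 \cdot R > 0$; so the hypothesis of Theorem~\ref{thm:inv_mmp} is automatically satisfied at every step of a $(K_X+\Delta)$-MMP over $Z$. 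Thus in the divisorial and flipping cases, $\mathcal{D}(\Delta^{=1})$ collapses to $\mathcal{D}(\Delta_W^{=1})$ (simple-homotopy equivalent), while the new pair $(W, \Delta_W)$ still satisfies the hypotheses (1)--(4) over $Z$ — in particular $(D_0)_W$ is still $f$-ample since it is the pushforward of an $f$-ample divisor and positivity on fiber curves is preserved — so we may continue. Since the MMP terminates, after finitely many steps we are reduced to the case where the relevant contraction is of fiber type, i.e.\ we have a Mori fiber space $g: X' \to Z'$ over $Z$ with $(X',\Delta')$ dlt, $\dim X' > \dim Z'$, $-(K_{X'}+\Delta')$ $g$-ample, and $(D_0)_{X'}$ $g$-ample (hence $g$-relatively nonzero, so it dominates $Z'$ and is $g$-ample).

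For the base case — a genuine Mori fiber space $g: X \to Z$ of relative Picard number one with $D_0$ $g$-ample — the plan is to exploit the fiber structure directly. Because $\rho(X/Z) = 1$ and $D_0$ is $g$-ample, $D_0$ dominates $Z$ and $D_0 \to Z$ is again of the same type (one checks $\dim D_0 > \dim Z$ using $D_0 \cdot (\text{fiber curve}) > 0$, so $D_0$ meets every fiber). The strategy is to show that $\mathcal{D}(\Delta^{=1})$ deformation retracts onto (a complex identified with) the dual complex of the restricted boundary on $D_0$. Concretely, by adjunction $(K_X+\Delta)|_{D_0} = K_{D_0} + \Delta_{D_0}$ where $(D_0, \Delta_{D_0})$ is again $\mathbb{Q}$-factorial dlt (by Proposition~\ref{prop:dltstrata}, $D_0$ is normal and the strata of $\Delta^{=1}$ containing $D_0$ correspond to strata of $\Delta_{D_0}^{=1}$), and $-(K_{D_0}+\Delta_{D_0}) = -(K_X+\Delta)|_{D_0}$ is still ample over $Z$. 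Moreover $\mathcal{D}(\Delta_{D_0}^{=1})$ is naturally the link-type subcomplex of $\mathcal{D}(\Delta^{=1})$ consisting of the vertex $D_0$ together with all strata through $D_0$; this is the open star of $D_0$, which is contractible (it is a cone with apex $D_0$). So it suffices to show $\mathcal{D}(\Delta^{=1})$ deformation retracts onto the open star of $D_0$, equivalently that every stratum of $\Delta^{=1}$ can be "pushed" into one meeting $D_0$. This is where the fiber-type structure enters: every component $E_i$ of $\Delta^{=1}$ that does not dominate $Z'$ is contained in fibers, and... the honest way to get the retraction is a second induction.

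The cleanest formulation, which I would actually carry out, folds everything into one induction on $\dim X$. After the MMP reduction above we have a Mori fiber space $g: X \to Z$ with $D_0$ $g$-ample. Run a $(K_X+\Delta - \epsilon D_0)$-MMP (or an MMP that makes $D_0$ relatively trivial) — more precisely, I would invoke Theorem~\ref{thm:inv_mmp} repeatedly to replace $X$ by a model on which we can apply adjunction along $D_0$, then use that $(D_0, \Delta_{D_0})$ has strictly smaller dimension and satisfies the inductive hypothesis with the induced fibration $D_0 \to Z$ (if $\dim D_0 > \dim Z$) or with $-(K_{D_0}+\Delta_{D_0})$ nef and big (if $D_0 \to Z$ is generically finite, which forces it birational since $D_0$ is normal and $Z$ is — actually here one reduces to the main theorem's nef-and-big statement, Theorem~\ref{thm:nefbig0}), to conclude $\mathcal{D}(\Delta_{D_0}^{=1})$ is contractible. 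Combining with the fact that $\mathcal{D}(\Delta_{D_0}^{=1})$ sits inside $\mathcal{D}(\Delta^{=1})$ as the open star of the vertex $D_0$, and that $\mathcal{D}(\Delta^{=1})$ collapses onto that open star (here one uses that $D_0$ being $g$-ample meets every fiber, so every maximal stratum is connected through $D_0$), we get contractibility of $\mathcal{D}(\Delta^{=1})$.

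The main obstacle I anticipate is the last geometric step: proving that $\mathcal{D}(\Delta^{=1})$ deformation retracts (or collapses) onto the open star of the vertex $D_0$. The MMP/adjunction bookkeeping is routine given Theorem~\ref{thm:inv_mmp}, and the reduction to a Mori fiber space is standard; but extracting the combinatorial retraction from the geometry of a divisor $D_0$ that dominates the base and is relatively ample requires care — one must argue that no stratum of $\Delta^{=1}$ is "far" from $D_0$, using connectedness of fibers (hypothesis (2)) together with relative ampleness of $D_0$, possibly via a further application of Theorem~\ref{thm:inv_mmp} to an auxiliary MMP over $Z$ that contracts everything not meeting $D_0$. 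Handling the edge case where $D_0 \to Z$ is birational (so the base case feeds into the nef-and-big theorem rather than recursing within this lemma) is the other delicate point, and the induction must be set up so the two theorems are proved together.
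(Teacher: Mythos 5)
Your proposal has the right picture of the answer---the dual complex should be a cone with apex the vertex $D_0$---but the two steps you build the argument on both have genuine problems, and the one step you correctly identify as ``the main obstacle'' is precisely the content of the lemma and is left unproved. First, the MMP reduction does not preserve the hypotheses. After a flip of a ray $R$ with $D_0\cdot R>0$, the strict transform of $D_0$ is \emph{negative} on the flipped curves, and $-(K+\Delta)$ is positive on them, so neither (3) nor (4) holds on the new model; more generally the pushforward of a relatively ample divisor under a birational contraction is not relatively ample (positivity on curves in the contracted ray says nothing about the full relative cone of the new model). So the claim that ``the new pair still satisfies (1)--(4)'' fails, and with it the reduction to a Mori fiber space via Theorem \ref{thm:inv_mmp}. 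Second, your fallback in the base case---when $D_0\to Z$ is generically finite, ``reduce to Theorem \ref{thm:nefbig0}''---is circular: that theorem is proved \emph{from} Lemma \ref{lem:MFS2} (via Lemma \ref{lem:MFS}, Theorem \ref{thm:scc2}, Proposition \ref{prop:ample}, \dots), and an induction ``proving the two theorems together'' is not set up anywhere in your sketch.

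The missing idea is that no MMP is needed at all: the cone structure can be read off directly from the geometry of $f$ using the Koll\'ar--Shokurov connectedness lemma. Write $\Delta^{=1}=\sum_{i=0}^m D_i$. One shows (i) no $D_i$ with $i\ge 1$ dominates $Z$ (if $\dim Z=\dim X-1$ and some $D_i$ dominated $Z$, a general fiber $F$ would be a curve with $(D_i\cup D_0)\cap F$ disconnected, contradicting connectedness of the non-klt locus over $Z$); hence $\dim f(D_i)<\dim D_i$, so every fiber of $D_i\to f(D_i)$ contains a curve contracted by $f$, which must meet the $f$-ample $D_0$---this gives $D_i\cap D_0\ne\emptyset$, and the connectedness lemma again gives that $D_i\cap D_0$ is connected (hence irreducible by Proposition \ref{prop:dltstrata}). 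For a deeper stratum $S\subset\bigcap_{i\in I}D_i$ with $1\in I$, one restricts by adjunction to $D_1$ (a component \emph{other} than $D_0$, so the relative dimension over the Stein factorization of $D_1\to f(D_1)$ stays positive and hypotheses (1)--(4) persist) and concludes by induction on $\dim X$ that $S\cap D_0$ is nonempty and connected. These two facts say exactly that $\mathcal{D}(\Delta^{=1})$ is the cone over $\mathcal{D}(\sum_{i\ge 1}D_i)$ with vertex $D_0$, hence contractible. Your sketch gestures at this (``every maximal stratum is connected through $D_0$'') but supplies neither the non-domination statement, nor the connectedness of $S\cap D_0$, nor a well-founded induction, so as written it is not a proof.
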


\begin{proof}
Let $g:D_0 \to Z$ be the induced morphism. 
Let $\Delta ^{=1} = \sum_{i=0}^m D_i$ be the irreducible decomposition. 

What we want to show is the following: 
\begin{itemize}
\item[(1)] Any stratum $S$ of $\sum_{i=1}^m D_i$ intersects with $D_0$, and
\item[(2)] $S \cap D_0$ is connected. 
\end{itemize}
These conditions imply that the dual complex of $\sum_{i=0}^m D_i$ is the cone of the dual complex of 
$\sum_{i=1}^m D_i$ with the vertex $D_0$. 
Therefore $\mathcal{D}(\Delta ^{=1})$ is contractible. 
We show (1) and (2) by induction on $\dim X$. 

First we prove the following claim. 
\begin{claim}\label{claim:majiwaru}
For any $i \in \{1, \ldots, m\}$, 
it holds that 
\begin{enumerate}
\item[(a)] $\dim f(D_i) < \dim D_i$, and
\item[(b)] $f(D_i) =f(D_i \cap D_0)$ holds, in particular $D_i \cap D_0 \not= \emptyset$. 
\item[(c)] $D_i \cap D_0$ is connected and irreducible. 
\end{enumerate}
\end{claim}
\begin{proof}
We prove (a). 
Since the assertion is clear if $\dim Z \leq \dim X - 2$, 
we may assume that $\dim Z = \dim X - 1$. 

Suppose that $f(D_i)=Z$ holds for some $i \in \{1, \dots, m\}$. 
For a general fiber $F$ of $f$, $\dim F = 1$ holds and $(D_i \cup D_0) \cap F$ is not connected. 
This contradicts the Koll{\'a}r-Shokurov connectedness lemma (cf.\ \cite[Theorem 1.2]{NT}). 
Therefore, $D_i$ does not dominate $Z$ for any $i \in \{1, \dots, m\}$. 

We prove (b). 
Let $x \in f(D_i)$ be a closed point. 
Since $\dim f(D_i) < \dim D_i$, 
there exists a curve $C$ on $X$ contained in $D_i \cap f^{-1}(x)$. 
Since $D_0$ is ample over $Z$, 
the contracted curve $C$ intersects with $D_0$. 
This implies $x \in f(D_i \cap D_0)$. 
Thus we get $f(D_i) = f(D_i \cap D_0)$. 

We prove (c). Suppose that $D_i \cap D_0$ is not connected. 
Let $S$ be a connected component of $D_i \cap D_0$ which satisfies $f(S) = f(D_i \cap D_0)$. 
Let $G$ be another connected component of $D_i \cap D_0$. 
Then for any closed point $x \in f(G)$, $D_i \cap D_0$ is not connected over $x$. 
However, this contradicts the Koll{\'a}r-Shokurov connectedness lemma. 
Therefore $D_i \cap D_0$ is connected. 
Then the irreducibility follows from Proposition \ref{prop:dltstrata} (1). 
\end{proof}

Let $S$ be a stratum of $\sum_{i=1}^m D_i$. 
Then $S$ is a connected component of $\bigcap _{i \in I} D_i$ for some 
$I \subset \{ 1, \ldots ,m \}$. 
We may assume that $1 \in I$ possibly changing the indices. 

Let $C:= f(D_1)$, and 
let $D_1 \xrightarrow{f'} C' \xrightarrow{s} C$ be the Stain factorisation of $D_1 \to C$. 
Let $\Delta _{D_1}$ be the effective $\mathbb{R}$-divisor on $D_1$ defined by adjunction 
$(K_X+\Delta)|_{D_1}=K_{D_1}+\Delta _{D_1}$. 
Then the following properties hold. 
\begin{enumerate}
\item[(d)] $(D_1, \Delta _{D_1})$ is dlt.
\item[(e)] $-(K_{D_1} + \Delta _{D_1})$ is $f'$-ample. 
\item[(f)] $- D_0|_{D_1}$ is $f'$-ample. 
\end{enumerate}

We prove (1) and (2) by induction on $\# I$. 
If $\# I = 1$, then (1) and (2) follow from  Claim \ref{claim:majiwaru}.
Suppose $\# I \ge 2$. Then $S$ is also a stratum of $\Delta _{D_1} ^{=1}$. 
Hence (1) and (2) holds by induction on the dimension. 
\end{proof}

\begin{lem}\label{lem:MFS} 
Let $(X, \Delta)$ be a $\mathbb{Q}$-factorial dlt pair over $k$ with the condition $(\star)$ and 
let $f: X \to Z$ be a $(K_X+\Delta)$-Mori fiber space to a quasi-projective $k$-variety $Z$. 
Suppose that $f(\mathrm{Supp}\, \Delta ^{=1})=Z$. 
Then $\mathcal{D}(\Delta ^{=1})$ is contractible. 
\end{lem}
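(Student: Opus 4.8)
The plan is to reduce this statement to Lemma~\ref{lem:MFS2}. Since $f\colon X \to Z$ is a $(K_X+\Delta)$-Mori fiber space, it is a projective surjective morphism onto the quasi-projective variety $Z$ with $\dim X > \dim Z$, it has connected fibers (as $f_*\mathcal{O}_X = \mathcal{O}_Z$), $-(K_X+\Delta)$ is $f$-ample, and the relative Picard number satisfies $\rho(X/Z)=1$. Comparing with the hypotheses of Lemma~\ref{lem:MFS2}, the only missing ingredient is an $f$-ample irreducible component of $\Delta^{=1}$. So the first step is to produce such a component, and the second step is to invoke Lemma~\ref{lem:MFS2}.

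For the first step I would argue as follows. Write $\mathrm{Supp}\,\Delta^{=1} = \bigcup_i D_i$ as a finite union of prime divisors. Since $Z$ is irreducible and $Z = f(\mathrm{Supp}\,\Delta^{=1}) = \bigcup_i f(D_i)$, some index, say $0$, has $\overline{f(D_0)} = Z$, i.e.\ $D_0 \to Z$ is dominant. Because $\rho(X/Z) = 1$ and $-(K_X+\Delta)$ generates $N^1(X/Z)$, we have $D_0 \equiv_Z \lambda\cdot\bigl(-(K_X+\Delta)\bigr)$ for some $\lambda \in \mathbb{R}$, and it suffices to show $\lambda > 0$. Pick a general closed point $z \in Z$ and set $F = f^{-1}(z)$. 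As $D_0 \to Z$ is a dominant morphism of irreducible varieties, for general $z$ the intersection $D_0 \cap F$ is a nonzero effective divisor on the projective variety $F$ (a dimension count gives $\dim(D_0\cap F) = \dim D_0 - \dim Z = \dim F - 1$); hence there is a curve $C \subset F$, for instance a general complete intersection of very ample divisors on $F$, with $C \not\subset D_0$ and $D_0 \cdot C > 0$. Since $-(K_X+\Delta)$ is $f$-ample, $\bigl(-(K_X+\Delta)\bigr)\cdot C > 0$ as well, so pairing the numerical equivalence above with $C$ forces $\lambda > 0$. Thus $D_0$ is $f$-ample.

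With $D_0$ in hand, all four hypotheses of Lemma~\ref{lem:MFS2} hold for $f\colon X \to Z$, and that lemma yields the contractibility of $\mathcal{D}(\Delta^{=1})$.

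I do not expect a serious obstacle: the real content is in Lemma~\ref{lem:MFS2}, and the present lemma is just the observation that a Mori fiber space whose $\Delta^{=1}$ dominates the base automatically carries an $f$-ample component inside $\Delta^{=1}$. The one point requiring a little care is verifying that $D_0 \cap F$ is genuinely a divisor (rather than of higher codimension) on the general fiber $F$, which is what allows one to find a curve $C$ with $D_0\cdot C > 0$; this is a routine dimension count for dominant morphisms of irreducible varieties and is valid over an arbitrary field.
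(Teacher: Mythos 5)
Your proposal is correct and follows essentially the same route as the paper: pick a component $D_0$ of $\Delta^{=1}$ dominating $Z$, use $\rho(X/Z)=1$ to conclude that $D_0$ is $f$-ample, and apply Lemma \ref{lem:MFS2}. You merely spell out the $f$-ampleness step (which the paper asserts in one line), and your justification of it is sound.
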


\begin{proof}
Since $f(\mathrm{Supp}\, \Delta ^{=1})=Z$, 
some irreducible component $D_0$ of $\Delta ^{=1}$ satisfies $f(D_0)=Z$. 
Since $\rho(X/Z)=1$, it follows that $D_0$ is $f$-ample. 
Hence the assertion follows from Lemma \ref{lem:MFS2}. 
\end{proof}

\subsection{Dual complex of weak Fano varieties}

\begin{thm}\label{thm:scc2}
Let $(X, \Omega)$ be a $\mathbb{Q}$-factorial projective log pair. 
Assume that 
\begin{itemize}
\item[(1)] $(X, \Omega ^{\wedge 1})$ is dlt but not klt. 
\item[(2)] $K_X + \Omega \sim _{\mathbb{R}} 0$. 
\item[(3)] $\mathrm{Supp}\, \Omega ^{>1} = \mathrm{Supp}\, \Omega ^{\ge 1}$. 
\end{itemize}
Then, the dual complex $\mathcal{D}(\Omega ^{\ge 1})$ is contractible. 
\end{thm}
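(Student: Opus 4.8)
The plan is to reduce to the Mori fiber space situation handled in Lemma \ref{lem:MFS} by running an appropriate MMP, keeping track of the dual complex via Theorem \ref{thm:inv_mmp}. First I would observe that since $(X, \Omega^{\wedge 1})$ is dlt but not klt, $\Omega^{\ge 1}$ is nonempty, and by condition (3) every component of $\Omega^{\ge 1}$ has coefficient strictly larger than $1$; in particular, such a component cannot be a component of $(\Omega^{\wedge 1})^{=1}$ arising with coefficient exactly $1$ from the original $\Omega$, so $\mathcal{D}(\Omega^{\ge 1}) = \mathcal{D}((\Omega^{\wedge 1})^{=1})$ really records the ``coefficient $>1$'' components together with whatever components already had coefficient $1$. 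The key point of condition (3) is that these two families of components are disjoint in support is not quite what is claimed; rather, (3) says $\mathrm{Supp}\,\Omega^{>1} = \mathrm{Supp}\,\Omega^{\ge 1}$, i.e.\ no component has coefficient exactly $1$, so $(\Omega^{\wedge 1})^{=1}$ consists precisely of the truncations to $1$ of the components of $\Omega^{>1}$. This is what will let us conclude that some component of $(\Omega^{\wedge 1})^{=1}$ is ``moved'' by every extremal ray we contract.

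Next I would set $\Delta := \Omega^{\wedge 1}$ and run a $(K_X + \Delta)$-MMP. Since $K_X + \Omega \sim_{\mathbb{R}} 0$ and $\Omega \ge \Delta$ with $\Omega - \Delta = \Omega^{>1} - (\Omega^{>1})^{\wedge 1}$ supported on $\mathrm{Supp}\,\Omega^{\ge 1}$, we have $-(K_X + \Delta) \sim_{\mathbb{R}} \Omega - \Delta \ge 0$, an effective divisor supported on $\mathrm{Supp}\,\Omega^{=1}_{\Delta} := \mathrm{Supp}\,(\Omega^{\wedge 1})^{=1}$. Running this MMP (legitimate under $(\star)$ by the references in Subsection \ref{subsection:notation}), each step is $(K_X+\Delta)$-negative, hence $(\Omega - \Delta)$-positive, so some component $D_0$ of $\mathrm{Supp}(\Omega-\Delta) = \mathrm{Supp}(\Omega^{\wedge 1})^{=1}$ has $D_0 \cdot R > 0$ on the relevant extremal ray. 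Therefore Theorem \ref{thm:inv_mmp} applies at every step and the dual complex $\mathcal{D}((\Delta)^{=1})$ is preserved up to simple-homotopy equivalence. The MMP must terminate with a Mori fiber space $f: X' \to Z$ (it cannot terminate with a minimal model, since $K_{X'} + \Delta'$ would then be nef while $-(K_{X'}+\Delta')$ is still pseudo-effective and nonzero, being linearly equivalent to the nonzero effective $\Omega' - \Delta'$ whose support is not contracted — one checks the support survives precisely because each step is positive on it).

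Finally, on the Mori fiber space $f: X' \to Z$, the pushforward $\Omega'$ still satisfies $K_{X'} + \Omega' \sim_{\mathbb{R}} 0$ and $\Omega' - \Delta' \ge 0$ is $f$-antiample up to the relation $-(K_{X'}+\Delta') = \Omega' - \Delta'$ combined with $-(K_{X'}+\Delta')$ being $f$-ample. Since $\mathrm{Supp}(\Omega'-\Delta') = \mathrm{Supp}\,\Delta'^{=1}$ is $f$-ample and hence dominates $Z$, some component $D_0$ of $\Delta'^{=1}$ has $f(D_0) = Z$; by $\rho(X'/Z)=1$, $D_0$ is $f$-ample, so Lemma \ref{lem:MFS} (equivalently Lemma \ref{lem:MFS2}) gives that $\mathcal{D}(\Delta'^{=1})$ is contractible. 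Tracing back through the simple-homotopy equivalences, $\mathcal{D}(\Omega^{\ge 1}) = \mathcal{D}(\Delta^{=1})$ is contractible as well.

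The main obstacle I anticipate is the bookkeeping that guarantees, at each MMP step, both that some component of $\Delta^{=1}$ is strictly positive on the contracted ray (so that Theorem \ref{thm:inv_mmp} — rather than merely \cite[Lemma 16]{dFKX17} for a divisorial contraction of a component disjoint from the strata — is the applicable tool) and that the support $\mathrm{Supp}(\Omega-\Delta)$ is never entirely contracted, so the MMP genuinely ends in a Mori fiber space dominated by this support. Both hinge on the effectivity and support identity coming from hypotheses (2) and (3), and one must be careful that a divisorial contraction could a priori contract a component of $\Delta^{=1}$; the point is that $-(K_X+\Delta)$ being $(\Omega-\Delta)$-effective with that precise support forces the numerical positivity to persist along the run.
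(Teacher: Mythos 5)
Your proposal follows essentially the same route as the paper: run the $(K_X+\Omega^{\wedge 1})$-MMP, use $-(K_{X_i}+\Omega_i^{\wedge 1})\sim_{\mathbb{R}}\Omega_i-\Omega_i^{\wedge 1}$ to see that some component of $(\Omega_i^{\wedge 1})^{=1}$ is positive on each contracted ray (so Theorem \ref{thm:inv_mmp} applies at every step), end with a Mori fiber space whose base is dominated by $\mathrm{Supp}\,\Omega^{\ge 1}$, and invoke Lemma \ref{lem:MFS}. The one point you gloss over is why this MMP can be run and terminates for a pair that is only dlt: the paper uses hypothesis (3) once more to note that $(X,\Omega^{\wedge 1}-\epsilon(\Omega-\Omega^{\wedge 1}))$ is klt for small $\epsilon>0$, which is the perturbation that legitimizes the MMP and its termination.
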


\begin{proof}
Since $(X, \Omega ^{\wedge 1})$ is not klt, it follows that 
\[
\mathrm{Supp}\, \Omega ^{>1} = \mathrm{Supp}\, \Omega ^{\ge 1} \not = \emptyset. 
\]
Therefore, $K_X+\Omega^{\wedge 1} \sim _{\mathbb{R}} - (\Omega-\Omega^{\wedge 1})$ is not pseudo-effective. 
Further, by (3), $\left( X, \Omega^{\wedge 1} - \epsilon (\Omega-\Omega^{\wedge 1}) \right)$ 
is klt for some small $\epsilon >0$. 
Hence we may run a $(K_X+\Omega^{\wedge 1})$-MMP and ends with a Mori fiber space $f: X_{\ell} \to Z$:
\[
X=:X_0 \dashrightarrow X_1  \dashrightarrow \cdots \dashrightarrow X_{\ell}.
\]

Let $\Omega_{i}$ be the push-forward of $\Omega$ on $X_{i}$. 
Then $\Omega _{i}$ also satisfies the conditions (1)--(3). 
Further, since 
$\Omega _{\ell} -\Omega _{\ell} ^{\wedge 1} \sim _{\mathbb{R}} 
-(K_{X_{\ell}}+\Omega _{\ell} ^{\wedge 1})$
is ample over $Z$, it follows that 
$f(\mathrm{Supp}\, \Omega _{\ell} ^{\ge 1}) = Z$. 
Hence, by Lemma \ref{lem:MFS}, the dual complex 
$\mathcal{D} ( (\Omega _{\ell} ^{\wedge 1}) ^{=1})$ is contractible. 

Let $R_i$ be the extremal ray of $\overline{\mathrm{NE}} (X_i)$ corresponding to the step of 
the MMP $X_i \dashrightarrow X_{i+1}$. 
Since 
$- (\Omega _{i} -\Omega _{i} ^{\wedge 1}) \cdot R_i < 0$, 
it follows that some component $D_i$ of 
$\mathrm{Supp} (\Omega _{i} ^{> 1})$\ ($= \mathrm{Supp} \left( (\Omega ^{\wedge 1}) ^{=1} \right)$) satisfies 
$D_i \cdot R_i > 0$. 
By Theorem \ref{thm:inv_mmp}, $\mathcal{D} ( (\Omega _{i} ^{\wedge 1}) ^{=1})$ and 
$\mathcal{D} ( (\Omega _{i+1} ^{\wedge 1}) ^{=1})$ are homotopy equivalent. 
Hence, $\mathcal{D}(\Omega ^{\ge 1}) = \mathcal{D} ( (\Omega ^{\wedge 1}) ^{=1})$ is contractible. 
\end{proof}

\begin{prop}\label{prop:ample}
Let $(X, \Delta)$ be a $\mathbb{Q}$-factorial projective pair over $k$ with the condition $(\star)$. 
Suppose that $-(K_X + \Delta)$ is ample. 
Then for any dlt blow-up $g: (Y, \Delta _Y) \to (X, \Delta)$, 
the dual complex $\mathcal{D}(\Delta _Y ^{\ge 1})$ is contractible, 
where we define $\Delta _Y$ by $K_Y + \Delta _{Y} = g^* (K_X + \Delta)$.
\end{prop}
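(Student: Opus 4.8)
The plan is to reduce the statement to Theorem \ref{thm:scc2} by producing, on the \emph{given} dlt blow‑up $g\colon Y\to X$, an effective boundary $\Omega_Y$ with $K_Y+\Omega_Y\sim_{\mathbb R}0$ such that $(Y,\Omega_Y^{\wedge1})$ is dlt, $\operatorname{Supp}\Omega_Y^{>1}=\operatorname{Supp}\Omega_Y^{\ge1}$, and $(\Omega_Y^{\wedge1})^{=1}=(\Delta_Y^{\wedge1})^{=1}$ as reduced divisors on $Y$; the last condition guarantees $\mathcal D(\Omega_Y^{\ge1})=\mathcal D(\Delta_Y^{\ge1})$. First I would dispose of a trivial case: if $(X,\Delta)$ is klt, then since every $g$‑exceptional prime divisor $E$ satisfies $a_E(X,\Delta)\le0$ while kltness forces $a_E(X,\Delta)>0$, there are no $g$‑exceptional divisors, so $g$ is small and hence an isomorphism by $\mathbb Q$‑factoriality of $X$; then $\Delta_Y^{\ge1}=0$ and the statement is vacuous. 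So I assume $(X,\Delta)$ is not klt, and by the same argument $\Delta_Y^{\ge1}\neq0$.

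Set $L:=-(K_X+\Delta)$, an ample $\mathbb R$‑Cartier divisor, so $-(K_Y+\Delta_Y)=g^*L$ is nef and big. I would choose an effective $g$‑exceptional $\mathbb R$‑divisor $F$ with $\operatorname{Supp}F=\operatorname{Ex}(g)$ and $-F$ $g$‑ample (e.g.\ write $Y$ as the blow‑up of $X$ along an ideal sheaf cosupported on $g(\operatorname{Ex}(g))$ and let $F$ be such that $\mathcal O_Y(-F)$ is the corresponding $g$‑ample tautological sheaf). Let $\widetilde D_1,\dots,\widetilde D_s$ be the strict transforms on $Y$ of the prime components of $\Delta$ of coefficient exactly one; these are components of $\Delta_Y$ of coefficient one, and they are disjoint from $\operatorname{Ex}(g)$. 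For $\eta,\zeta>0$ small the $\mathbb R$‑divisor $g^*L-\eta F-\zeta\sum_j\widetilde D_j$ is ample on $Y$, so I can pick a general effective $\mathbb R$‑divisor $B\sim_{\mathbb R} g^*L-\eta F-\zeta\sum_j\widetilde D_j$ with all coefficients $<1$ whose components are in general position with respect to the strata of $(Y,\Delta_Y^{\wedge1})$. I then set
\[
\Omega_Y:=\Delta_Y+\eta F+\zeta\sum_{j=1}^{s}\widetilde D_j+B .
\]

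For the verification: $\Omega_Y\ge0$ and $K_Y+\Omega_Y\sim_{\mathbb R}K_Y+\Delta_Y+g^*L=0$. Each prime component of $\Delta_Y$ of coefficient $\ge1$ is raised strictly above $1$ — the $g$‑exceptional ones by $\eta F$ (because $\operatorname{Supp}F=\operatorname{Ex}(g)$) and the non‑exceptional ones, which are precisely the $\widetilde D_j$, by $\zeta\sum_j\widetilde D_j$ — while $\eta F$, $\zeta\sum_j\widetilde D_j$ and $B$ create no new prime component of coefficient $\ge1$; hence $\operatorname{Supp}\Omega_Y^{>1}=\operatorname{Supp}\Omega_Y^{\ge1}=\operatorname{Supp}\Delta_Y^{\ge1}$. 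Moreover, since $\operatorname{Supp}F\cup\bigcup_j\widetilde D_j\subseteq\operatorname{Supp}\Delta_Y^{\ge1}$ and $B$ is general with small coefficients, one checks $\Omega_Y^{\wedge1}=\Delta_Y^{\wedge1}+B$, which is dlt by a Bertini‑type general position argument and is not klt because $\Delta_Y^{\ge1}\neq0$. Thus $(Y,\Omega_Y)$ satisfies the hypotheses of Theorem \ref{thm:scc2}, so $\mathcal D(\Omega_Y^{\ge1})$ is contractible; and since $(\Omega_Y^{\wedge1})^{=1}=(\Delta_Y^{\wedge1})^{=1}$ with both pairs dlt, Proposition \ref{prop:dltstrata} gives these two divisors the same strata, whence $\mathcal D(\Delta_Y^{\ge1})=\mathcal D((\Delta_Y^{\wedge1})^{=1})=\mathcal D(\Omega_Y^{\ge1})$ is contractible.

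The main obstacle is exactly condition~(3) of Theorem \ref{thm:scc2}: one must remove \emph{every} coefficient‑one component of $\Omega_Y$ while neither enlarging $\operatorname{Supp}\Omega_Y^{\ge1}$ nor destroying dltness of $\Omega_Y^{\wedge1}$. This is what forces the two‑part perturbation above — the exceptional coefficient‑one divisors are absorbed by the $g$‑anti‑ample $F$ (whose support lies in $\Delta_Y^{\ge1}$, so adding $\eta F$ does not touch the $\wedge1$‑truncation at all), the horizontal ones by $\zeta\sum_j\widetilde D_j$, and only the genuinely new general divisor $B$ alters $\Omega_Y^{\wedge1}$, so that dltness is preserved. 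A secondary technical point is the assertion that a small general ample perturbation of a $\mathbb Q$‑factorial dlt pair stays dlt, which in the positive‑characteristic three‑dimensional case of $(\star)$ rests on the existence of log resolutions in dimension three.
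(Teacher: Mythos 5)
Your proof is correct and follows essentially the same route as the paper's: perturb $\Delta_Y$ by an effective divisor supported on $\mathrm{Supp}(\Delta_Y^{\ge 1})$ (the $g$-exceptional part being absorbed via a $g$-anti-ample $F$, which exists by $\mathbb{Q}$-factoriality of $X$) so that every coefficient-$\ge 1$ component acquires coefficient $>1$, add a general ample effective $\mathbb{R}$-divisor to reach $K_Y+\Omega_Y\sim_{\mathbb{R}}0$ without altering the dual complex, and invoke Theorem \ref{thm:scc2}. The only slip is the unused parenthetical claim that the strict transforms $\widetilde{D}_j$ are disjoint from $\mathrm{Ex}(g)$, which is false in general but harmless here.
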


\begin{proof}
By Proposition \ref{thm:scc2}, 
it suffices to find an effective $\mathbb{R}$-divisor $\Omega _Y$ on $Y$ such that 
\begin{enumerate}
\item[(1)] $(Y, \Omega _Y ^{\wedge 1})$ is dlt, 
\item[(2)] $K_Y + \Omega _Y \sim _{\mathbb{R}} 0$, 
\item[(3)] $\mathrm{Supp} (\Omega _Y^{>1}) = \mathrm{Supp} (\Omega _Y ^{\ge 1})$, and
\item[(4)] $\mathrm{Supp} (\Omega _Y ^{\ge 1}) = \mathrm{Supp} (\Delta _Y ^{\ge 1})$. 
\end{enumerate}

Since $X$ is $\mathbb{Q}$-factorial, 
there exists an effective $\mathbb{R}$-divisor $F$ on $Y$ such that $-F$ is $g$-ample and 
$\mathrm{Supp}\, F = \mathrm{Excep} (g)$. 
Since $-(K_Y + \Delta _Y)$ is the pullback of an ample $\mathbb{R}$-divisor 
$-(K_X + \Delta)$ on $X$, 
it follows that $-(K_Y + \Delta _Y)- \epsilon F$ is ample for any sufficiently small $\epsilon > 0$. 

Note that $\mathrm{Supp}\, F \subset \mathrm{Supp} (\Delta _Y ^{\ge 1})$. 
Thus, we can find an effective $\mathbb{R}$-divisor $B$ on $Y$ 
such that 
\begin{itemize}
\item $B \ge \epsilon F$, 
\item $-(K_Y + \Delta _Y) - B$ is still ample, and 
\item $\mathrm{Supp}\, B = \mathrm{Supp} (\Delta _Y ^{\ge 1})$. 
\end{itemize}
Then there exists an effective $\mathbb{R}$-divisor $A$ on $Y$ 
such that 
\begin{itemize}
\item $A \sim_{\mathbb{R}} -(K_Y + \Delta _Y)- B$, and 
\item $(Y, \Delta _Y ^{\wedge 1} + 2A)$ is dlt (cf.\ \cite[Lemma 2.8]{NT} in positive characteristic case). 
\end{itemize}
In particular, it follows that 
\begin{itemize}
\item $(Y, \Delta _Y ^{\wedge 1} + A)$ is dlt, and 
\item $(\Delta _Y  + A) ^{\ge 1} = \Delta _Y ^{\ge 1}$. 
\end{itemize}

Set $\Omega _Y :=  \Delta _Y + A + B$. 
Then (2) holds.  Since
\begin{itemize}
\item $\Omega _Y ^{\wedge 1} = (\Delta _Y + A) ^{\wedge 1} = \Delta _Y ^{\wedge 1} + A$, 
\end{itemize}
(1) also holds. (3) and (4) hold by the way of taking $A$ and $B$. 
\end{proof}

\begin{lem}\label{lem:Qnefbig}
Let $(X, \Delta)$ be a $\mathbb{Q}$-factorial projective pair over $k$ with the condition $(\star)$. 
Assume that $-(K_X + \Delta)$ is nef and big. 
Then there exists a dlt blow-up $g: (Y, \Delta _Y) \to (X, \Delta)$
such that the dual complex $\mathcal{D}(\Delta _Y ^{\ge 1})$ is contractible, 
where we define $\Delta _Y$ by $K_Y + \Delta _{Y} = g^* (K_X + \Delta)$. 
\end{lem}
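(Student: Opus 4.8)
The statement to prove is Lemma~\ref{lem:Qnefbig}: for a $\mathbb{Q}$-factorial projective pair $(X,\Delta)$ with the condition $(\star)$ and $-(K_X+\Delta)$ nef and big, there is \emph{some} dlt blow-up $g\colon (Y,\Delta_Y)\to(X,\Delta)$ whose dual complex $\mathcal{D}(\Delta_Y^{\ge 1})$ is contractible. The strategy is to reduce to the ample case (Proposition~\ref{prop:ample}) by running a suitable MMP that turns a nef and big divisor into an ample one, keeping track of the dual complex along the way via Theorem~\ref{thm:inv_mmp} and the lemmas on blow-ups. So the whole point is: \emph{nef and big = ample on the ample model, reached by a $(K+\Delta)$-positive (equivalently $-(K+\Delta)$-negative) MMP}, and each step of this MMP must interact with a component of the boundary so that Theorem~\ref{thm:inv_mmp} applies.

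\medskip

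\noindent
\textbf{Step 1 (reduce to a convenient dlt blow-up).} First I would choose a dlt blow-up $g_0\colon (Y_0,\Delta_{Y_0})\to (X,\Delta)$ as in Theorem~\ref{thm:dltmodif}, so that $(Y_0,\Delta_{Y_0}^{\wedge 1})$ is $\mathbb{Q}$-factorial dlt and, since $X$ is $\mathbb{Q}$-factorial, $g_0$ automatically satisfies the extra condition (3) of that theorem (Remark~\ref{rmk:dltbup}(1)); moreover $\mathrm{Supp}(\Delta_{Y_0}^{\ge 1}) \supset \mathrm{Excep}(g_0)$. Since $-(K_X+\Delta)$ is nef and big, so is its pullback $-(K_{Y_0}+\Delta_{Y_0}) = g_0^*(-(K_X+\Delta))$. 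Exactly as in the proof of Proposition~\ref{prop:ample}, I can pick an effective $F$ with $-F$ being $g_0$-ample and $\mathrm{Supp}\,F = \mathrm{Excep}(g_0) \subset \mathrm{Supp}(\Delta_{Y_0}^{\ge 1})$; then $-(K_{Y_0}+\Delta_{Y_0}) - \epsilon F$ is nef and big (indeed we can arrange it to be a pullback of ample plus something, or just use bigness is an open condition) for small $\epsilon>0$. Absorbing $\epsilon F$ into a larger effective divisor $B$ with $\mathrm{Supp}\,B = \mathrm{Supp}(\Delta_{Y_0}^{\ge 1})$ and $-(K_{Y_0}+\Delta_{Y_0}) - B$ still nef and big, and then, as in Proposition~\ref{prop:ample}, writing $-(K_{Y_0}+\Delta_{Y_0}) - B \sim_{\mathbb{R}} A$ with $(Y_0,\Delta_{Y_0}^{\wedge 1}+2A)$ dlt and $(\Delta_{Y_0}+A)^{\ge 1} = \Delta_{Y_0}^{\ge 1}$, I set $\Omega := \Delta_{Y_0} + A + B$. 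This is an effective $\mathbb{R}$-divisor with $(Y_0,\Omega^{\wedge 1})$ dlt, $\mathrm{Supp}(\Omega^{>1}) = \mathrm{Supp}(\Omega^{\ge 1}) = \mathrm{Supp}(\Delta_{Y_0}^{\ge 1})$, and crucially $-(K_{Y_0}+\Omega) = B - (-(K_{Y_0}+\Delta_{Y_0})-B)\ldots$ — wait, rather: $K_{Y_0}+\Omega \sim_{\mathbb{R}} K_{Y_0}+\Delta_{Y_0}+A+B \sim_{\mathbb{R}} 0$, so actually $\Omega$ already gives $K_{Y_0}+\Omega\sim_{\mathbb{R}}0$. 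That is the $\mathbb{R}$-linearly-trivial situation handled by Theorem~\ref{thm:scc2} directly, not needing a further MMP. Hmm — but that is precisely the proof of Proposition~\ref{prop:ample}, and there $-(K_X+\Delta)$ was assumed \emph{ample}; the ampleness was used to guarantee $-(K_{Y_0}+\Delta_{Y_0})-\epsilon F$ is still ample, which then let us choose $B$ with full support.

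\medskip

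\noindent
\textbf{Step 2 (the genuine extra work for nef and big).} When $-(K_X+\Delta)$ is only nef and big, $-(K_{Y_0}+\Delta_{Y_0})$ is nef and big but not ample, and subtracting $\epsilon F$ will in general destroy nefness. So the plan is: run a $(K_{Y_0}+\Delta_{Y_0}^{\wedge 1})$-MMP (which terminates under $(\star)$, since $-(K_{Y_0}+\Delta_{Y_0}^{\wedge 1})$ is big: writing $-(K_{Y_0}+\Delta_{Y_0}^{\wedge 1}) = -(K_{Y_0}+\Delta_{Y_0}) + (\Delta_{Y_0} - \Delta_{Y_0}^{\wedge 1})$, the first term is nef and big and the second is effective supported in $\mathrm{Supp}(\Delta_{Y_0}^{\ge 1})$, so we can find a klt perturbation as in Theorem~\ref{thm:scc2}). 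This MMP contracts the locus where $-(K_{Y_0}+\Delta_{Y_0})$ fails to be ample and ends with a minimal model $h\colon Y_0 \dashrightarrow Y$ on which $-(K_Y+\Delta_Y)$ is nef, big, and — being the pushforward of a nef and big divisor that is now a minimal model — semiample, hence ample on its ample model; more precisely, since $-(K_{Y_0}+\Delta_{Y_0})$ is nef and big, after this MMP $-(K_Y+\Delta_Y)$ is nef and big and $(K_Y+\Delta_Y)$-negative extremal rays are exhausted, so $-(K_Y+\Delta_Y)$ is actually ample (a nef and big divisor with no curves of degree zero is ample — and the MMP arranges exactly that the extremal contraction is surjective onto the ample model). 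At each step $X_i\dashrightarrow X_{i+1}$ of this MMP, the contracted ray $R_i$ satisfies $(K_{X_i}+\Delta_{X_i}^{\wedge 1})\cdot R_i < 0$ while $(K_{X_i}+\Delta_{X_i})\cdot R_i \ge 0$ (because $-(K+\Delta)$ stays nef along a $-(K+\Delta)$-trivial-or-MMP — here I need the steps to be $(K+\Delta)$-nonnegative, which holds since $-(K+\Delta)$ was nef to start); hence $(\Delta_{X_i} - \Delta_{X_i}^{\wedge 1})\cdot R_i > 0$, so some component $D_i$ of $\mathrm{Supp}(\Delta_{X_i}^{>1}) = \mathrm{Supp}((\Delta_{X_i}^{\wedge 1})^{=1})$ has $D_i \cdot R_i > 0$. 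Then Theorem~\ref{thm:inv_mmp} applies to each step, so $\mathcal{D}((\Delta_{Y_0}^{\wedge 1})^{=1})$ and $\mathcal{D}((\Delta_Y^{\wedge 1})^{=1})$ are homotopy equivalent, i.e. $\mathcal{D}(\Delta_{Y_0}^{\ge 1}) \simeq \mathcal{D}(\Delta_Y^{\ge 1})$.

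\medskip

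\noindent
\textbf{Step 3 (conclude).} On $Y$, the divisor $-(K_Y+\Delta_Y)$ is ample. There is a subtlety: $Y$ is a $\mathbb{Q}$-factorial projective pair, and $(Y,\Delta_Y^{\wedge 1})$ is dlt, but is $g\colon Y \to X$ (the composite, which is \emph{not} a morphism — $h$ is only birational) a dlt blow-up of $(X,\Delta)$? The map $Y_0 \dashrightarrow Y$ is a birational map over $X$ that only contracts divisors (the $(K_{Y_0}+\Delta_{Y_0}^{\wedge 1})$-MMP contracts divisors and does flips, all over nothing — but we do have a morphism to $X$). Actually the MMP is not over $X$ here, so $Y \to X$ need not be a morphism. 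To fix this cleanly, I would instead apply Proposition~\ref{prop:ample} to the pair $(Y,\Delta_Y)$ itself — wait, that needs a dlt blow-up of $(Y,\Delta_Y)$, and the identity $\mathrm{id}\colon Y\to Y$ works if $(Y,\Delta_Y^{\wedge 1})$ is $\mathbb{Q}$-factorial dlt and $a_E(Y,\Delta_Y)\le 0$ vacuously — but $\Delta_Y$ may have coefficients $>1$ so $\mathrm{id}$ is a dlt blow-up of $(Y,\Delta_Y)$ in the stated sense. Hence by Proposition~\ref{prop:ample} applied to $(Y,\Delta_Y)$ with $-(K_Y+\Delta_Y)$ ample, $\mathcal{D}(\Delta_Y^{\ge 1})$ is contractible. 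Combined with Step 2 this shows $\mathcal{D}(\Delta_{Y_0}^{\ge 1})$ is contractible, and $g_0\colon Y_0 \to X$ \emph{is} a genuine dlt blow-up of $(X,\Delta)$. That is the required conclusion.

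\medskip

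\noindent
\textbf{Main obstacle.} The delicate point is Step~2: verifying that the relevant MMP terminates and, above all, that each of its steps is $-(K+\Delta)$-\emph{trivial or negative} but strictly $(K+\Delta^{\wedge 1})$-negative, so that $\Delta - \Delta^{\wedge 1}$ is strictly positive on the contracted ray and Theorem~\ref{thm:inv_mmp} is applicable — and that the end product genuinely has $-(K+\Delta)$ ample (not merely nef and big). This is where the nef-and-big hypothesis is converted, via the MMP, into the ample hypothesis of Proposition~\ref{prop:ample}, and it is the only place the argument differs from the ample case.
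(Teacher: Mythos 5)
Your Step 2 is where the argument breaks, and it breaks in several places at once. First, there is a sign error at the crucial point: since $-(K_{X_i}+\Delta_{X_i})$ is nef, every curve class satisfies $(K_{X_i}+\Delta_{X_i})\cdot R_i \le 0$, not $\ge 0$ as you assert. For a $(K+\Delta^{\wedge 1})$-negative extremal ray $R_i$ one therefore gets $(\Delta_{X_i}-\Delta_{X_i}^{\wedge 1})\cdot R_i = (K_{X_i}+\Delta_{X_i})\cdot R_i - (K_{X_i}+\Delta_{X_i}^{\wedge 1})\cdot R_i$, a difference of two nonpositive numbers with no definite sign, so the hypothesis of Theorem \ref{thm:inv_mmp} (some component of $\Delta^{>1}$ positive on $R_i$) is not verified. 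This positivity is exactly what the identity $K+\Omega\sim_{\mathbb{R}}0$ buys in Theorem \ref{thm:scc2}, and it is not available here. Second, the claim that the $(K+\Delta^{\wedge 1})$-MMP "ends with a minimal model on which $-(K_Y+\Delta_Y)$ is ample" cannot hold: $-(K+\Delta^{\wedge 1})$ is big, so $K+\Delta^{\wedge 1}$ is not pseudo-effective and the MMP terminates with a Mori fiber space, never a minimal model; moreover a curve on which $-(K+\Delta)$ has degree zero and which is disjoint from $\mathrm{Supp}\,\Delta^{>1}$ is $(K+\Delta^{\wedge 1})$-trivial and hence invisible to this MMP, so there is no mechanism forcing $-(K+\Delta)$ to become ample. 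Third, nefness of $-(K+\Delta)$ is not preserved along the way: if a flipped ray satisfies $(K+\Delta)\cdot R_i<0$, then after the flip $-(K+\Delta)$ is negative on the flipped ray. Since Step 3 rests entirely on Step 2, the proof does not go through.

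The paper takes a different and much softer route that you should compare with: it perturbs on $X$ rather than running an MMP upstairs. By Kodaira's lemma there is an effective $E$ with $-(K_X+\Delta)-\epsilon E$ ample for all small $\epsilon>0$. Fix a log resolution $h\colon W\to X$ of $(X,\Delta+E)$ and choose $\epsilon$ so small that every $h$-exceptional divisor $F$ with $a_F(X,\Delta)>0$ still has $a_F(X,\Delta+\epsilon E)>0$. Then take a dlt blow-up $g\colon Y\to X$ of the \emph{ample} pair $(X,\Delta+\epsilon E)$ such that $Y\dasharrow W$ contracts no divisor (Remark \ref{rmk:dltbup}(2)); Proposition \ref{prop:ample} gives contractibility of $\mathcal{D}((\Delta_Y+\epsilon g^*E)^{\ge 1})$, and the discrepancy condition above guarantees both that $g$ is also a dlt blow-up of $(X,\Delta)$ and that $\mathrm{Supp}((\Delta_Y+\epsilon g^*E)^{\ge 1})=\mathrm{Supp}(\Delta_Y^{\ge 1})$, so the two dual complexes coincide. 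The freedom to choose \emph{which} dlt blow-up (the lemma only asserts existence) is what makes this perturbation argument work.
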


\begin{proof}
Since $-(K_X + \Delta)$ is nef and big, 
there exists an effective $\mathbb{R}$-divisor $E$ such that 
$-(K_X + \Delta) - \epsilon E$ is ample for any real number $\epsilon$ 
satisfying $0<\epsilon \le 1$. 
Let $h: W \to X$ be a log resolution of $(X, \Delta + E)$. 
For sufficiently small $\epsilon >0$, we can assume that 
\begin{itemize}
\item[(1)] For any $h$-exceptional prime divisor $F$ with $a_F (X, \Delta) > 0$, 
it still holds that $a_F(X, \Delta + \epsilon E) > 0$. 
\end{itemize}
Let $g: Y \to X$ be a dlt blow-up of $(X, \Delta + \epsilon E)$ 
such that the birational map $Y \dasharrow W$ does not contract any divisor on $Y$ (Remark \ref{rmk:dltbup} (2)). 
By Proposition \ref{prop:ample}, 
the dual complex $\mathcal{D}((\Delta _Y + \epsilon g^* E) ^{\ge 1})$ is contractible. 
Hence it is sufficient to check the following three conditions (the conditions (2) and (3) below mean that $g$ is also a dlt blow-up of $(X, \Delta)$): 
\begin{itemize}
\item[(2)] $a_F(X, \Delta) \le 0$ for any $g$-exceptional divisor $F$. 
\item[(3)] $(Y, \Delta _Y ^{\wedge 1})$ is dlt. 
\item[(4)] $\mathrm{Supp} \left( (\Delta _Y + \epsilon g^* E) ^{\ge 1} \right) = \mathrm{Supp} (\Delta _Y ^{\ge 1})$. 
\end{itemize}
Since any $g$-exceptional divisor is $h$-exceptional, the conditions (2) and (4) follow from (1). 
By (2), it follows that $\Delta _Y \ge 0$ and hence (3) is obvious. 
\end{proof}

\begin{thm}\label{thm:nefbig}
Let $(X, \Delta)$ be a projective pair over $k$ with the condition $(\star)$. 
Assume that $-(K_X + \Delta)$ is nef and big. 
Then there exists a dlt blow-up $g: (Y, \Delta _Y) \to (X, \Delta)$
such that the dual complex $\mathcal{D}(\Delta _Y ^{\ge 1})$ is contractible, 
where we define $\Delta _Y$ by $K_Y + \Delta _{Y} = g^* (K_X + \Delta)$. 
Further, we can take such $g$ with the additional condition (3) in Theorem \ref{thm:dltmodif}. 
\end{thm}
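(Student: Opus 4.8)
The plan is to reduce the general case to the $\mathbb{Q}$-factorial case already treated in Lemma \ref{lem:Qnefbig}, by first passing to a small $\mathbb{Q}$-factorialization (or more precisely a $\mathbb{Q}$-factorial dlt blow-up) of $(X,\Delta)$ and then comparing dual complexes. First I would invoke Theorem \ref{thm:dltmodif} to produce a dlt blow-up $h\colon (X', \Delta_{X'}) \to (X,\Delta)$ with the additional property (3), so that $X'$ is $\mathbb{Q}$-factorial, $(X', \Delta_{X'}^{\wedge 1})$ is dlt, $a_E(X,\Delta)\le 0$ for every $h$-exceptional divisor $E$, and $h^{-1}(\mathrm{Nklt}(X,\Delta)) = \mathrm{Nklt}(X', \Delta_{X'})$. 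Since $K_{X'}+\Delta_{X'} = h^*(K_X+\Delta)$ and $h$ is projective, $-(K_{X'}+\Delta_{X'}) = h^*(-(K_X+\Delta))$ is the pullback of a nef and big divisor, hence nef and big on $X'$.

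Now $(X', \Delta_{X'})$ is a $\mathbb{Q}$-factorial projective pair with $-(K_{X'}+\Delta_{X'})$ nef and big, so Lemma \ref{lem:Qnefbig} applies: there is a dlt blow-up $g'\colon (Y, \Delta_Y) \to (X', \Delta_{X'})$, with $K_Y + \Delta_Y = (g')^*(K_{X'}+\Delta_{X'})$, such that $\mathcal{D}(\Delta_Y^{\ge 1})$ is contractible. Setting $g := h \circ g'\colon Y \to X$, we have $K_Y + \Delta_Y = (g')^*h^*(K_X+\Delta) = g^*(K_X+\Delta)$, so the divisor $\Delta_Y$ is the one prescribed in the statement. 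It remains to check that $g$ is a dlt blow-up of $(X,\Delta)$ with condition (3). Condition (2) (that $(Y, \Delta_Y^{\wedge 1})$ is $\mathbb{Q}$-factorial dlt) is immediate from the fact that $g'$ is a dlt blow-up. For condition (1), any $g$-exceptional prime divisor $E$ is either $g'$-exceptional — in which case $a_E(X',\Delta_{X'}) \le 0$, and since $K_{X'}+\Delta_{X'}=h^*(K_X+\Delta)$ we get $a_E(X,\Delta) = a_E(X',\Delta_{X'}) \le 0$ — or its image on $X'$ is a divisor, necessarily $h$-exceptional, and then again $a_E(X,\Delta) = a_E(X',\Delta_{X'}) \le 0$ because $a$-invariants are unchanged under the crepant pullback along $g'$. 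For condition (3), I would use the established equalities $g^{-1}(\mathrm{Nklt}(X,\Delta)) = (g')^{-1}h^{-1}(\mathrm{Nklt}(X,\Delta)) = (g')^{-1}(\mathrm{Nklt}(X', \Delta_{X'})) = \mathrm{Nklt}(Y, \Delta_Y)$, where the middle equality is property (3) for $h$ and the last one follows since $g'$ can itself be taken to satisfy (3) by Remark \ref{rmk:dltbup} (1) (as $X'$ is $\mathbb{Q}$-factorial, \emph{every} dlt blow-up of $(X',\Delta_{X'})$ has property (3)).

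The main subtlety I expect is bookkeeping the compatibility of the $a$-invariants and of the $\wedge 1$ truncations through the two-step factorization $Y \to X' \to X$: one must be careful that $\Delta_Y^{\wedge 1}$ computed relative to $X$ agrees with what is obtained from the intermediate pair, and that the dlt blow-up $g'$ furnished by Lemma \ref{lem:Qnefbig} for the $\mathbb{Q}$-factorial pair can simultaneously be assumed to have property (3); the latter is handled by Remark \ref{rmk:dltbup} (1). Everything else is a formal concatenation of the statements already proved, so no new geometric input is needed beyond Lemma \ref{lem:Qnefbig} and Theorem \ref{thm:dltmodif}.
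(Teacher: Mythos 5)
Your proposal is correct and is essentially identical to the paper's own proof: both first pass to a $\mathbb{Q}$-factorial dlt blow-up $h\colon (X',\Delta_{X'})\to(X,\Delta)$ with condition (3) via Theorem \ref{thm:dltmodif}, then apply Lemma \ref{lem:Qnefbig} to $(X',\Delta_{X'})$ and take the composition. The only difference is that you spell out the routine verification that the composition is again a dlt blow-up satisfying (3), which the paper asserts without detail; your verification is accurate.
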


\begin{proof}
Let $h: (X', \Delta _{X'}) \to (X, \Delta)$ be a dlt blow-up of $(X, \Delta)$ 
with the condition (3) in Theorem \ref{thm:dltmodif}. 
Then, $X'$ is $\mathbb{Q}$-factorial and $-(K_{X'} + \Delta _{X'})$ is still nef and big. 

By Lemma \ref{lem:Qnefbig}, 
there exists a dlt blow-up $(X'', \Delta _{X''}) \to (X', \Delta _{X'})$ such that 
the dual complex $\mathcal{D}(\Delta _{X''} ^{\ge 1})$ is contractible. 
Since the composition $X'' \to X$ is again a dlt blow-up of $(X, \Delta)$ 
with condition (3) in Theorem \ref{thm:dltmodif}, we complete the proof. 
\end{proof}

In characteristic zero, we can conclude the following stronger statement. 

\begin{thm}\label{thm:nefbig0}
Let $(X, \Delta)$ be a projective pair over $k$ with the condition (i) in $(\star)$. 
Assume that $-(K_X + \Delta)$ is nef and big. 
Then for any dlt blow-up $g: (Y, \Delta _Y) \to (X, \Delta)$, 
the dual complex $\mathcal{D}(\Delta _Y ^{\ge 1})$ is contractible, 
where we define $\Delta _Y$ by $K_Y + \Delta _{Y} = g^* (K_X + \Delta)$.
\end{thm}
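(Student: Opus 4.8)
The plan is to combine the existence statement for a good dlt blow-up (Theorem~\ref{thm:nefbig}, specialized to characteristic zero) with the homotopy invariance of the dual complex under change of dlt blow-up (Proposition~\ref{prop:indep}). In other words, the only thing to do is to upgrade the word ``there exists'' in Theorem~\ref{thm:nefbig} to ``for any'', and this upgrade is exactly what Proposition~\ref{prop:indep} provides in characteristic zero.

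First I would apply Theorem~\ref{thm:nefbig} under condition~(i) in~$(\star)$ to fix \emph{one} dlt blow-up $g_0 \colon (Y_0, \Delta_{Y_0}) \to (X, \Delta)$, with $\Delta_{Y_0}$ defined by $K_{Y_0} + \Delta_{Y_0} = g_0^*(K_X + \Delta)$, such that the dual complex $\mathcal{D}(\Delta_{Y_0}^{\ge 1})$ is contractible. (Here $-(K_X+\Delta)$ nef and big is precisely the hypothesis needed for Theorem~\ref{thm:nefbig}.)

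Next, let $g \colon (Y, \Delta_Y) \to (X, \Delta)$ be an arbitrary dlt blow-up. Since $(X, \Delta)$ satisfies condition~(i) in~$(\star)$, Proposition~\ref{prop:indep} applies to the two dlt blow-ups $g$ and $g_0$ and shows that $\mathcal{D}(\Delta_Y^{\ge 1})$ and $\mathcal{D}(\Delta_{Y_0}^{\ge 1})$ are simple-homotopy equivalent, in particular homotopy equivalent. As $\mathcal{D}(\Delta_{Y_0}^{\ge 1})$ is contractible, so is $\mathcal{D}(\Delta_Y^{\ge 1})$, which is the assertion.

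There is essentially no new obstacle in this argument: all of the substantive work has already been done, namely in Theorem~\ref{thm:nefbig} (whose proof runs through Proposition~\ref{prop:ample}, Theorem~\ref{thm:scc2} and the Mori-fiber-space Lemmas~\ref{lem:MFS2} and~\ref{lem:MFS}) and in Proposition~\ref{prop:indep}, whose proof relies on the weak factorization theorem~\cite{AKMW02}. The only point worth stressing is that it is exactly this appeal to weak factorization that restricts the ``for any dlt blow-up'' form of the statement to characteristic zero; in positive characteristic one only gets the existence of a single good dlt blow-up, as in Theorem~\ref{thm:nefbig}.
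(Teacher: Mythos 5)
Your proof is correct and is exactly the argument the paper gives: Theorem~\ref{thm:nefbig} supplies one dlt blow-up with contractible dual complex, and Proposition~\ref{prop:indep} transfers contractibility (via simple-homotopy equivalence) to every other dlt blow-up in characteristic zero.
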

\begin{proof}
The assertion follows from Theorem \ref{thm:nefbig} and Proposition \ref{prop:indep}. 
\end{proof}

\section{Vanishing theorem on log canonical Fano varieties}

In this section, we prove a vanishing theorem of Witt vector cohomology of Fano varieties of Ambro-Fujino type 
(Theorem \ref{thm:WAFV}).

\subsection{Non-klt locus of log Fano three-folds}
In this subsection, 
we prove Lemma \ref{lem:rationality}, Proposition \ref{prop:normality}, and Proposition \ref{prop:tree} 
which will be used in the proof of Theorem \ref{thm:WAFV}. 

\begin{lem}\label{lem:rationality}
Let $k$ be an algebraic closed field of characteristic $p > 5$. 
Let $(X, \Delta)$ be a three-dimensional projective log canonical pair over $k$ with $-(K_X + \Delta)$ ample. 
Suppose that $\mathrm{Nklt}(X, \Delta)$ is pure dimension one. 
Then each irreducible component of $\mathrm{Nklt}(X, \Delta)$ is a rational curve. 
\end{lem}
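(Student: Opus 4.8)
The plan is to reduce the statement to a one-dimensional situation by adjunction and then invoke the classification of log canonical centers together with a Fano-type positivity argument. First I would take a dlt blow-up $g : (Y, \Delta_Y) \to (X, \Delta)$ with the extra property (3) of Theorem \ref{thm:dltmodif}, so that $g^{-1}(\mathrm{Nklt}(X,\Delta)) = \mathrm{Nklt}(Y, \Delta_Y)$ and $-(K_Y + \Delta_Y) = -g^*(K_X + \Delta)$ is nef and big. Since $\mathrm{Nklt}(X,\Delta)$ is pure one-dimensional, each irreducible component $C$ of $\mathrm{Nklt}(X,\Delta)$ is the $g$-image of some log canonical center $S$ of $(Y,\Delta_Y^{\wedge 1})$, i.e.\ a stratum of $\Delta_Y^{=1}$, and $g|_S : S \to C$ is birational (a surjective morphism between curves, and $g$ is an isomorphism over the generic point of $C$ because $g$ is birational and $C$ is a divisorial-type center only if it sits over a codimension-two center — here I would argue that the minimal lc center over $C$ maps finitely, and since everything is over a curve the map is birational, or replace $S$ by a suitable stratum). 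So it suffices to prove that $S$ is a rational curve.

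Next I would apply adjunction repeatedly to cut down to $S$. By Proposition \ref{prop:dltstrata}, the minimal stratum $S$ through a general point is normal of the expected codimension, and iterated adjunction (Kollár's theory of different, valid in dimension three and $p>5$) yields an effective $\mathbb{R}$-divisor $\Delta_S$ on $S$ with $(K_Y + \Delta_Y^{\wedge 1})|_S = K_S + \Delta_S$ and $(S, \Delta_S)$ a (dlt, hence) log canonical pair; moreover $-(K_S + \Delta_S)$ is the restriction of a nef and big divisor. Now $S$ is a curve, so $(S, \Delta_S)$ log canonical forces $\deg \Delta_S \le$ something and $-\deg(K_S + \Delta_S) = -\deg K_S - \deg \Delta_S > 0$; since $\deg \Delta_S \ge 0$ this gives $\deg K_S < 0$, hence $S \cong \mathbb{P}^1$ and $C$ is a rational curve. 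The positivity needed here is just that the restriction of a nef and big $\mathbb{R}$-Cartier divisor to a positive-dimensional subvariety has positive degree against any curve meeting it properly — on a curve $S$ I need $-(K_S+\Delta_S)$ to have positive degree, which follows because $-(K_Y+\Delta_Y)$ is big and $S$ is not contracted (indeed $S$ maps onto the curve $C$, so $-(K_Y+\Delta_Y)\cdot S > 0$ as $-(K_X+\Delta)$ is ample on $X$).

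The main obstacle I expect is making the adjunction and the positivity step work cleanly in characteristic $p > 5$ and in the presence of possibly non-klt (indeed possibly coefficient $>1$) behavior: one must ensure the relevant stratum $S$ is normal and that the different $\Delta_S$ is effective, which is exactly where Proposition \ref{prop:dltstrata} and the dimension-three, $p>5$ hypotheses are used, and one must handle the case where $\mathrm{Nklt}$ has embedded-type behavior or where the minimal lc center over $C$ is a surface dominating $C$ rather than a curve mapping birationally. In the latter case I would instead run a further relative MMP, or use the connectedness of lc centers, to produce an lc center of dimension one dominating $C$; alternatively, since $\dim X = 3$, an lc center over a curve $C$ is either a curve mapping birationally to $C$ or a divisor, and a divisorial lc center cannot dominate a curve while being a single component over it unless its generic fiber is $\mathbb{P}^1$, so $C$ is still rational by considering the $\mathbb{P}^1$-bundle structure. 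Packaging these cases is the only real work; the final numerical conclusion $\deg K_S < 0 \Rightarrow S \cong \mathbb{P}^1$ is immediate.
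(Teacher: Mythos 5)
There is a genuine gap, and it sits exactly where you yourself flag the ``only real work.'' Since $\mathrm{Nklt}(X,\Delta)$ is of pure dimension one, the component $C_0$ is a codimension-two lc center, so a dlt blow-up $f\colon Y \to X$ must extract a divisor $E \subset \mathrm{Supp}\,\Delta_Y^{=1}$ dominating $C_0$; there is in general \emph{no} one-dimensional stratum of $\Delta_Y^{=1}$ mapping finitely onto $C_0$ (that would require two components of $\Delta_Y^{=1}$ whose intersection dominates $C_0$, which need not happen). So your primary route --- cut down by adjunction to a curve stratum $S$ over $C_0$ and compute $\deg K_S < 0$ --- cannot be run in general, and the divisorial case is not an edge case but the generic one. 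Your fallback for that case is a non sequitur: knowing that the generic fiber of $E \to C_0$ is $\mathbb{P}^1$ says nothing about the rationality of the base $C_0$ (a $\mathbb{P}^1$-bundle over an elliptic curve is a counterexample to the implication you invoke).

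The paper's proof supplies precisely the missing idea. Take the surface component $E$ of $\Delta_Y^{=1}$ dominating $C_0$ and set $K_E + \Delta_E = (K_Y+\Delta_Y)|_E$ by adjunction. Then $-(K_E+\Delta_E)$ is the pullback under $E \to C_0$ of the ample divisor $-(K_X+\Delta)|_{C_0}$, so $K_E+\Delta_E$ is anti-nef and nontrivial, hence not nef. The cone theorem for surfaces (in $p>5$, via \cite[Proposition 3.15]{Tan14}) produces a $(K_E+\Delta_E)$-negative \emph{rational} curve $B$; since curves contracted by $E \to C_0$ meet the pulled-back divisor $K_E+\Delta_E$ trivially, $B$ must dominate $C_0$, and $C_0$ is the image of a rational curve, hence rational. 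If you want to salvage your write-up, replace the ``$\mathbb{P}^1$-bundle'' paragraph with this cone-theorem argument; the curve-stratum branch of your argument is fine as far as it goes but is not needed once the divisorial case is handled this way.
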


\begin{proof}
Let $C_0$ be an irreducible component of $\mathrm{Nklt}(X, \Delta)$. 
Let $f: Y \to X$ be a dlt blow-up of $(X, \Delta)$ (Theorem \ref{thm:dltmodif}). 
We define $\Delta _Y$ by $K_Y + \Delta _Y = f^* (K_X + \Delta)$. 
Let $E \subset \mathrm{Supp}\, \Delta _Y ^{=1}$ be a component which dominates $C_0$. 
We define $\Delta _E$ by $K_{E} + \Delta _E = (K_Y + \Delta _Y)|_{E}$. 
Then $K_E + \Delta _E$ is not nef since $-(K_X + \Delta)$ is ample. 
By the cone theorem for surfaces (cf.\ \cite[Proposition 3.15]{Tan14}), 
there exists a $(K_E + \Delta _E)$-negative rational curve $B$. 
Since $(K_E + \Delta _E) \cdot B < 0$ and 
$-(K_E + \Delta _E)$ is the pulled back of a divisor on $C_0$, 
it follows that $B$ dominates $C_0$, which proves the rationality of $C_0$. 
\end{proof}

We prove two properties (Proposition \ref{prop:G1}, Proposition \ref{prop:G2}) on the dual complex 
$\mathcal{D}(\Delta _{Y} ^{\ge 1})$ of a dlt blow-up. 
They will be used in the proof of Proposition \ref{prop:normality}. 

First, we introduce some notations. 
When we write that $G$ is a $\Delta$-complex, we regard $G$ as the set of its simplices. 
Hence, when we write $S \in G$, then $S$ is a simplices of $G$. 
\begin{defi}
\begin{enumerate}
\item For a $\Delta$-complex $G$, we denote by $|G|$ the topological space of $G$. Hence, $|G| = \bigcup _{S \in G} S$ as set. 
\item For a $\Delta$-complex $G$ and its subset $G' \subset G$, we define the \textit{star} $\mathrm{st}(G', G)$ by 
\[
\mathrm{st}(G', G) = \{ S \in G \mid \text{$S \cap S' \not = \emptyset$ for some $S' \in G'$} \}. 
\]
\item For a $\Delta$-complex $G$, and $S, S' \in G$, we write $S < S'$ when $S$ is a face of $S'$. 
\end{enumerate}
\end{defi}

\begin{prop}\label{prop:G1}
Let $k$ be an algebraic closed field of characteristic $p > 5$. 
Let $(X, \Delta)$ be a three-dimensional projective log canonical pair over $k$ with $-(K_X + \Delta)$ nef and big. 
Suppose that $C := \mathrm{Nklt}(X, \Delta)$ is of pure dimension one. 
Let $f: (Y, \Delta_Y) \to (X, \Delta)$ be a dlt blow-up such that 
$\mathrm{Supp}\, \Delta _Y ^{\ge 1} = f^{-1}(C)$ and that $G := \mathcal{D}(\Delta _Y ^{\ge 1})$ is contractible. 
Let $C_0$ be an irreducible component of $C = \mathrm{Nklt}(X, \Delta)$. 

Let $G'$ be the subcomplex of $G$ which consists of the strata of $\Delta _Y ^{\ge 1}$ which dominate $C_0$. 
Let $U := |G| \setminus |G'|$ be the open subset of $|G|$, and 
let $V \subset |G|$ be a sufficiently small open neighborhood of $|G'|$. 
Then following hold. 
\begin{itemize}
\item[(1)] $G'$ is a connected subcomplex of $G$ of dimension at most one.  
\item[(2)] For each connected component $U'$ of $U$, it follows that $V \cap U'$ is also connected. 
\end{itemize}
\end{prop}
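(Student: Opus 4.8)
The statement is a local-to-global topological claim about the dual complex $G = \mathcal{D}(\Delta_Y^{\ge 1})$, which by Proposition~\ref{prop:regular} is a regular $\Delta$-complex, together with its subcomplex $G'$ of strata dominating a fixed component $C_0$. I would prove (1) and (2) essentially by translating everything back into birational geometry: the strata of $\Delta_Y^{\ge1}$ are exactly the log canonical centers of $(Y,\Delta_Y^{\wedge1})$, which map to log canonical centers of $(X,\Delta)$ contained in $C = \mathrm{Nklt}(X,\Delta)$, a one-dimensional set. Since $C$ has pure dimension one, any stratum of $\Delta_Y^{\ge1}$ has image either a point of $C$ or a full component of $C$; the strata in $G'$ are those with image $C_0$.

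\textbf{Proof of (1).} For the dimension bound: a stratum $S$ of $\Delta_Y^{\ge1}$ of codimension $j$ in $Y$ has dimension $3-j$, so $j \le 3$. If $S \in G'$ then $S$ dominates the curve $C_0$, so $\dim S \ge 1$, i.e.\ $j \le 2$; hence the corresponding cell of $G$ has dimension $j-1 \le 1$. This gives $\dim G' \le 1$. For connectedness I would invoke the Koll\'ar--Shokurov connectedness lemma over the base $C_0$, exactly as in the proof of Lemma~\ref{lem:MFS2}, Claim~\ref{claim:majiwaru}(c): if a stratum $E$ (a divisor, the minimal-dimension case $j=1$) dominates $C_0$, then applying adjunction on $E$ and running the argument of Lemma~\ref{lem:rationality} (or directly: $-(K_X+\Delta)$ ample forces the non-klt centers over $C_0$ to be connected over $C_0$) shows that the union of all strata dominating $C_0$ is connected over $C_0$, hence connected, hence $|G'|$ is connected, hence $G'$ is a connected subcomplex. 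I would write this by noting that $\mathrm{Nklt}$ of the fibers behaves well and that two divisorial strata both dominating $C_0$ meet over $C_0$ by connectedness, and any lower stratum in $G'$ is a face of such a divisorial one.

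\textbf{Proof of (2), and the main obstacle.} This is the genuinely combinatorial part. By definition $U = |G| \setminus |G'|$ and $V$ is a small open neighborhood of $|G'|$ in $|G|$; I would take $V$ to be (the geometric realization of) the open star of $G'$, i.e.\ $V = |\mathrm{st}(G',G)| $ suitably thickened so that $V \cap U'$ deformation retracts onto the ``link part'' of $U'$ facing $G'$. The claim $V \cap U'$ connected then amounts to: for a fixed connected component $U'$ of $U$, the set of cells of $G$ that meet $|G'|$ and lie in the closure of $U'$ forms a connected piece. The key input is that $G$ itself is \emph{contractible} (hypothesis), so in particular connected and simply connected; combined with (1) that $G'$ is a connected graph, a Mayer--Vietoris / van Kampen argument on $|G| = V \cup U$ with $V \cap U \simeq \bigsqcup (V \cap U')$ forces constraints on the number of components of each $V \cap U'$. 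Concretely: $|G'|$ is connected and contractibility of $|G|$ means the inclusion $|G'| \hookrightarrow |G|$ kills nothing that would create extra components after removing $|G'|$; I would argue that if some $V \cap U'$ were disconnected, one could produce a nontrivial loop in $|G|$ (passing through $U'$ and closing up through $V$), contradicting $\pi_1(|G|) = 0$, or produce a $0$-cycle obstruction contradicting $H_0$. \textbf{The hard part} is making this rigorous at the level of the $\Delta$-complex without extra hypotheses on $G'$ (e.g.\ it need not be a subcomplex whose link is nice), so I expect the real work is a careful purely combinatorial lemma: \emph{if $G$ is a contractible regular $\Delta$-complex and $G'$ a connected subcomplex with $\dim G' \le 1$, then for every connected component $U'$ of $|G| \setminus |G'|$, a regular neighborhood of $|G'|$ meets $U'$ in a connected set}. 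I would prove this lemma by induction on the number of cells of $G \setminus G'$, collapsing free faces and tracking components, using that $G'$ being a contractible-or-at-worst-graph subcomplex of a contractible complex means $|G|/|G'|$ is still simply connected, so a Seifert--van Kampen decomposition $|G| = V \cup U$ along $V \cap U$ yields $\pi_1(|G|)$ as a free product over the components of $V\cap U'$ amalgamated trivially, forcing each to be connected.

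Finally I would remark that (1) and (2) are precisely the combinatorial facts needed downstream in the proof of Proposition~\ref{prop:normality} (normality of $\mathrm{Nklt}$ along $C_0$) and Proposition~\ref{prop:tree} (tree structure), so the statement is exactly tailored to that application and no more is claimed.
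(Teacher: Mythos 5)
Your overall strategy matches the paper's, but you make it harder than it is in two places. For (1), the dimension bound is exactly the paper's (implicit) one; for connectedness, however, the paper does not need the Koll\'ar--Shokurov lemma ``over $C_0$'' (there is no fibration onto $C_0$ to apply it to). It simply picks a general closed point $q\in C_0$ such that a stratum $S$ satisfies $q\in f(S)$ if and only if $f(S)=C_0$; then $f^{-1}(q)\subset f^{-1}(C)=\mathrm{Supp}\,\Delta_Y^{\ge 1}$ is connected because $f$ is proper birational onto a normal variety, and the strata meeting $f^{-1}(q)$ are precisely those of $G'$, so $G'$ is connected. Your adjunction/connectedness-lemma route is in the same spirit but is a detour and is not quite correctly set up as stated.

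For (2), the Mayer--Vietoris computation you sketch \emph{is} the entire proof in the paper, and it is already rigorous: with $U,V$ open, $U\cup V=|G|$, one has
\[
H_1(|G|,\mathbb{Q}) \to H_0(U\cap V,\mathbb{Q}) \to H_0(U,\mathbb{Q})\oplus H_0(V,\mathbb{Q}) \to H_0(|G|,\mathbb{Q}) \to 0,
\]
and $H_1(|G|)=0$ by contractibility while $V$ and $|G|$ are connected (using (1)), so the inclusion-induced map $H_0(U\cap V)\to H_0(U)$ is an isomorphism; since this map is the linearization of $\pi_0(U\cap V)\to\pi_0(U)$, each component $U'$ of $U$ meets $V$ in exactly one component. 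No regularity of links, no free-face collapsing, and no extra combinatorial lemma about regular neighborhoods is needed --- the only input beyond contractibility is that $V$ deformation retracts onto the connected set $|G'|$, which is what ``sufficiently small open neighborhood'' provides. So the ``main obstacle'' you flag is not an obstacle, and the unproved combinatorial lemma you propose as a substitute can be deleted.
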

\begin{proof}
We prove (1). Only the connectedness of $G'$ is non-trivial. 
Let $q \in C_0$ be a general closed point. Since $q$ is general, we may assume that 
\begin{itemize}
\item for a strata $S$ of $\Delta _Y ^{\ge 1}$, $q \in f(S)$ holds if and only if $f(S) = C_0$ holds. 
\end{itemize}
Then the connectedness of the fiber $f^{-1}(q)$ shows that the connectedness of $G'$.

For (2), consider the Mayer--Vietoris sequence 
\[
H_1(|G| , \mathbb{Q}) \to H_0(U \cap V, \mathbb{Q}) \to 
H_0(U, \mathbb{Q}) \oplus H_0(V, \mathbb{Q}) \to 
H_0(|G| , \mathbb{Q}) \to 0. 
\]
Here, $H_1 (|G|, \mathbb{Q}) = 0$ follows because $|G|$ is contractible, 
$H_0(|G| , \mathbb{Q}) = H_0(V, \mathbb{Q}) = 0$ follows because $|G|$ and $V$ are connected. 
Therefore, it follows that $H_0(U \cap V, \mathbb{Q}) \cong H_0(U, \mathbb{Q})$, 
which proves the claim. 
\end{proof}


\begin{prop}\label{prop:G2}
Let $(X, \Delta), (Y, \Delta _Y), C, C_0, G, G', U, V$ be as in the Proposition \ref{prop:G1}. 
Let $U'$ be a connected component of $U$. 
Consider a pair $(B, S)$ with the following conditions:
\begin{itemize}
\item[(a)] $B \in \mathrm{st}(G', G) \setminus G'$ is an edge, and $S \in G'$ is a vertex. 
\item[(b)] $S < B$ and $B \cap U' \not = \emptyset$. 
\end{itemize}
For any two pairs $(B, S)$ and $(B', S')$ with the conditions (a) and (b) above, 
the assertion is that 
there exists a sequence of pairs 
\[
(B, S) = (B_0, S_0), (B_1, S_1), \ldots , (B_k, S_k)=(B', S')
\]
with $k \ge 0$ and the following conditions:
\begin{itemize}
\item[(c)] Each $(B_j, S_j)$ satisfies the conditions (a) and (b). 
\item[(d)] For each $0 \le i \le k-1$, 
the pairs $(B_i, S_i), (B_{i+1}, S_{i+1})$ satisfy one of the following conditions: 
\begin{itemize}
\item[(d-1)] $S_i = S_{i+1}$ holds, and $B_i, B_{i+1} < F$ for some $2$-simplex $F \in G$. 
\item[(d-2)] $S_i \not = S_{i+1}$ holds, and $S_i, S_{i+1} < E$ for some edge $E \in G'$. 
Further, $B_i, B_{i+1}, E < F$ for some $2$-simplex $F \in G$. 
\end{itemize}
\end{itemize}
\end{prop}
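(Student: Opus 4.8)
My plan is to recognise the pairs $(B,S)$, together with the transitions (d-1) and (d-2), as the vertices and edges of a graph computing $\pi_0$ of $V\cap U'$, and then to apply the connectedness of $V\cap U'$ provided by Proposition~\ref{prop:G1}(2). Since the pair $(X,\Delta)$ — hence also $Y$ — is three-dimensional, every stratum of $\Delta_Y^{\ge 1}$ has codimension at most $3$, so $G=\mathcal D(\Delta_Y^{\ge 1})$ is a $\Delta$-complex of dimension at most $2$, and by Proposition~\ref{prop:G1}(1) its subcomplex $G'$ is connected of dimension at most $1$. I would take $V$ to be a regular neighbourhood of $|G'|$ adapted to the cell structure — e.g.\ the open star of $|G'|$ in the barycentric subdivision of $G$ — small enough that $V$ deformation retracts onto $|G'|$; then $V\setminus|G'|=V\cap U$ deformation retracts onto the link $\mathrm{Lk}(G',G)$, so in particular the component $L'$ of $\mathrm{Lk}(G',G)$ lying over the chosen component $U'$ of $U$ is connected.

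The main work is to identify $\mathrm{Lk}(G',G)$ combinatorially. Since $\dim|G|\le 2$ and $\dim|G'|\le 1$, this link is a graph. Over a vertex $S$ of $G'$ it is $\mathrm{Lk}(S,G)$ with the vertices coming from edges of $G'$ through $S$ removed: its vertices are then the edges $B$ of $G$ through $S$ with $B\notin G'$ — i.e.\ exactly the pairs $(B,S)$ satisfying (a) — and its edges are the $2$-simplices $F\ni S$ both of whose edges through $S$ lie outside $G'$ — i.e.\ exactly the transitions (d-1). Over an edge $E=\langle S,S'\rangle$ of $G'$, each $2$-simplex $F\supseteq E$ contributes, through the ``slab'' of $F$ along $E$, an edge of the link joining $(B,S)$ to $(B',S')$, where $B,B'$ are the two remaining edges of $F$ — i.e.\ exactly the transitions (d-2). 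Hence the vertex set of $\mathrm{Lk}(G',G)$ is the set of pairs satisfying (a) and its edge set is the set of transitions (d-1) and (d-2); moreover, since $B\notin G'$ a point of $B$ near $S$ lies in $U$, and it lies in $U'$ precisely when $B\cap U'\ne\emptyset$, so the full subgraph on the pairs with $B\cap U'\ne\emptyset$ is exactly $L'$.

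Granting the identification, the conclusion is immediate: the two pairs in the statement are vertices of $L'$, which is connected, so they are joined by an edge-path in $L'$; reading off that path gives a finite sequence of transitions of type (d-1) or (d-2) through pairs that all lie over $U'$ — hence all satisfy (a) and (b) — which is the required sequence.

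The delicate point, and the step where one must use the geometry rather than pure point-set topology, is the identification in the second paragraph — concretely, that every edge of $\mathrm{Lk}(G',G)$ over $U'$ is of type (d-1) or (d-2). A priori a $2$-simplex $F$ of $G$ could have two of its edges through one vertex both lying in $G'$; near that corner the local model of $V\cap U$ is still connected, yet there is no transition available, and the naive identification fails. One must therefore rule out such degenerate simplices, or otherwise account for them, using that $G$ and $G'$ come from an actual dlt blow-up with $\mathrm{Supp}\,\Delta_Y^{\ge1}=f^{-1}(C)$ and $C$ of pure dimension one — for instance by analysing the morphisms from the components $D_i$ of $\Delta_Y^{=1}$ to $C_0$ and applying the Koll\'ar--Shokurov connectedness lemma, in the spirit of the proof of Proposition~\ref{prop:G1}. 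Once the local structure near $|G'|$ is pinned down, the remainder is routine piecewise-linear topology of the link of a subcomplex.
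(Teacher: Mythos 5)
Your strategy coincides with the paper's: Proposition \ref{prop:G1} (2) supplies the connectedness of $U'\cap V$, and one converts a path in $U'\cap V$ into the required chain by recording which cells of $\mathrm{st}(G',G)\setminus G'$ it passes through. The paper does this directly, choosing an alternating sequence $B^{(0)},F^{(0)},B^{(1)},\dots$ of connected components of $B_i\cap(U'\cap V)$ and $F_i\cap(U'\cap V)$ with $B^{(i)},B^{(i+1)}\subset F^{(i)}$, and then analysing, for each $2$-simplex $F_i$, the connected component of $F_i\cap|G'|$ adjacent to $F^{(i)}$: a vertex yields a move of type (d-1), an edge a move of type (d-2). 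Your link formulation is a cleaner packaging of the same bookkeeping, and everything up to the identification of the edges of $\mathrm{Lk}(G',G)$ with the allowed transitions is correct.

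As a standalone proof, however, your proposal is incomplete, and you say so yourself: the case of a $2$-simplex $F$ with two of its edges in $G'$ is left open, and this is a genuine gap rather than a removable scruple. In that configuration the third edge $B$ of $F$ has both of its vertices $S,S'$ in $G'$, the set $F\cap(U'\cap V)$ is a single connected strip joining the germ of $B$ at $S$ to the germ of $B$ at $S'$, and yet the passage from $(B,S)$ to $(B,S')$ is neither of type (d-1) (since $S\ne S'$) nor of type (d-2) (since the edge of $F$ with vertices $S,S'$ is $B$ itself, which is not in $G'$); for an abstract complex consisting of such a single triangle the statement would simply be false. One must therefore either prove that such a simplex cannot occur for the dual complex of a dlt blow-up in this setting, or enlarge the list of moves (and then adjust STEP 2 of Proposition \ref{prop:normality} accordingly); you only gesture at the former without an argument. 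It is worth recording that the paper's own proof is equally silent here: its case (e-2) tacitly assumes that the component of $F_i\cap|G'|$ adjacent to $F^{(i)}$ is a single edge and that $B_i\ne B_{i+1}$, which is precisely what fails in the degenerate configuration. So you have located the real weak point of the argument, but locating it is not the same as closing it.
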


\vspace{3mm}

\begin{center}
\begin{footnotesize}
{\unitlength 0.1in%
\begin{picture}(39.4800,8.4500)(10.0000,-16.4500)%
%
\special{pn 4}%
\special{sh 1}%
\special{ar 1560 1400 16 16 0 6.2831853}%
\special{sh 1}%
\special{ar 2120 1600 16 16 0 6.2831853}%
\special{sh 1}%
\special{ar 1000 1600 16 16 0 6.2831853}%
\special{sh 1}%
\special{ar 1000 1200 16 16 0 6.2831853}%
\special{sh 1}%
\special{ar 2680 1600 16 16 0 6.2831853}%
\special{sh 1}%
\special{ar 3240 1600 16 16 0 6.2831853}%
\special{sh 1}%
\special{ar 3800 1600 16 16 0 6.2831853}%
\special{sh 1}%
\special{ar 4360 1400 16 16 0 6.2831853}%
\special{sh 1}%
\special{ar 4920 1200 16 16 0 6.2831853}%
\special{sh 1}%
\special{ar 4920 1200 16 16 0 6.2831853}%
%
\special{pn 4}%
\special{sh 1}%
\special{ar 4080 1200 16 16 0 6.2831853}%
\special{sh 1}%
\special{ar 4080 1200 16 16 0 6.2831853}%
%
\special{pn 13}%
\special{pa 1000 1200}%
\special{pa 1000 1200}%
\special{fp}%
\special{pa 1000 1200}%
\special{pa 1560 1400}%
\special{fp}%
\special{pa 1560 1400}%
\special{pa 1000 1600}%
\special{fp}%
\special{pa 1560 1400}%
\special{pa 2120 1600}%
\special{fp}%
\special{pa 2120 1600}%
\special{pa 2680 1600}%
\special{fp}%
\special{pa 2680 1600}%
\special{pa 3240 1600}%
\special{fp}%
\special{pa 3240 1600}%
\special{pa 3800 1600}%
\special{fp}%
\special{pa 3800 1600}%
\special{pa 4080 1200}%
\special{fp}%
\special{pa 3800 1600}%
\special{pa 4360 1400}%
\special{fp}%
\special{pa 4360 1400}%
\special{pa 4920 1200}%
\special{fp}%
%
\special{pn 4}%
\special{sh 1}%
\special{ar 1980 1000 10 10 0 6.2831853}%
\special{sh 1}%
\special{ar 2680 900 10 10 0 6.2831853}%
\special{sh 1}%
\special{ar 3520 900 10 10 0 6.2831853}%
\special{sh 1}%
\special{ar 4640 800 10 10 0 6.2831853}%
\special{sh 1}%
\special{ar 4640 800 10 10 0 6.2831853}%
%
\special{pn 8}%
\special{pa 1980 1000}%
\special{pa 2120 1600}%
\special{fp}%
\special{pa 2120 1600}%
\special{pa 2680 900}%
\special{fp}%
\special{pa 2680 900}%
\special{pa 1980 1000}%
\special{fp}%
\special{pa 3240 1600}%
\special{pa 2680 900}%
\special{fp}%
\special{pa 2680 900}%
\special{pa 2680 1600}%
\special{fp}%
%
\special{pn 8}%
\special{pa 2680 1600}%
\special{pa 2680 1600}%
\special{fp}%
%
\special{pn 8}%
\special{pa 3240 1600}%
\special{pa 3520 900}%
\special{fp}%
\special{pa 3520 900}%
\special{pa 2680 900}%
\special{fp}%
\special{pa 4920 1200}%
\special{pa 4640 800}%
\special{fp}%
\put(19.8000,-11.6000){\makebox(0,0)[rt]{$B$}}%
\put(24.2800,-12.1000){\makebox(0,0)[rb]{$B_1$}}%
\put(26.6600,-13.6000){\makebox(0,0)[rt]{$B_2$}}%
\put(30.7200,-13.7000){\makebox(0,0)[rt]{$B_3$}}%
\put(33.9400,-11.6000){\makebox(0,0)[rb]{$B_4$}}%
\put(21.2000,-17.1000){\makebox(0,0){$S=S_1$}}%
\put(26.8000,-17.1000){\makebox(0,0){$S_2$}}%
\put(32.4000,-17.1000){\makebox(0,0){$S_2=S_4$}}%
\put(49.4800,-12.3000){\makebox(0,0)[lt]{$S'$}}%
\put(47.8000,-9.7000){\makebox(0,0)[lb]{$B'$}}%
\end{picture}}%
\end{footnotesize}
\end{center}

\vspace{5mm}

\begin{proof}
Note that 
$U' \cap V$ is a connected component of $U \cap V$ by Proposition \ref{prop:G1} and 
that $U \cap V \subset \bigcup _{A \in \, \mathrm{st}\, (G', G) \setminus G'} \mathrm{int}\, A$. 

We also note that giving a pair $(B, S)$ with conditions (a) and (b) is equivalent to 
giving $B ^{\circ}$ in the following set. 
\begin{align*}
\Gamma := 
\left\{ B^{\circ} \ \middle |
\begin{array}{l}
\text{$B^{\circ}$ is a connected component of $B \cap (U' \cap V)$} \\ 
\text{for some edge $B \in \mathrm{st}(G, G') \setminus G'$}
\end{array}\right \}
\end{align*}
Indeed, for a pair $(B, S)$ with conditions (a) and (b), there exists the unique connected component of $B \cap (U' \cap V)$ which is around $S$. 
Inversely, for $B^ {\circ}$ in the set above, the corresponding $B$ and $S$ are uniquely determined. 

Let $B^{\circ}$ (resp.\ ${B'} ^{\circ}$) be the connected component of $B \cap (U' \cap V)$ 
(resp.\ $B' \cap (U' \cap V)$) which is around $S$ (resp.\ $S'$). 

Since $B^{\circ}, {B'} ^{\circ} \subset U' \cap V$ and $U' \cap V$ is connected, 
we can take the following sequence 
\[
B^{(0)} := B^{\circ}, F^{(0)}, B^{(1)}, F^{(1)}, \ldots , B^{(k-1)}, F^{(k-1)}, B^{(k)}:= {B'} ^{\circ}
\]
with the following conditions:
\begin{itemize}
\item Each $B^{(i)}$ is a connected component of $B_i \cap (U' \cap V)$ for some edge $B_i \in \mathrm{st}(G', G) \setminus G'$ 
(equivalently $B^{(i)} \in \Gamma$). 
\item Each $F^{(i)}$ is a connected component of $F_i \cap (U' \cap V)$ for some $2$-simplex $F_i \in \mathrm{st}(G', G) \setminus G'$. 
\item $B^{(i)}, B^{(i+1)} \subset F^{(i)}$
\end{itemize}
Possibly passing to a subsequence, we may also assume that 
\begin{itemize}
\item $B^{(i)} \not = B^{(i+1)}$ for each $0 \le i \le k-1$. 
\end{itemize}
We denote by $S_i$ the unique vertex of $B_i$ which is around $B^{(i)}$. Obviously, $S_i$ is a vertex of $G'$. 

For each $i$, there are two possibilities. 
\begin{itemize}
\item[(e-1)] The connected component of $F_i \cap |G'|$ which is around $F^{(i)}$ is zero dimensional. 
\item[(e-2)] The connected component of $F_i \cap |G'|$ which is around $F^{(i)}$ is one dimensional.
\end{itemize}

\vspace{3mm}

\begin{center}
\begin{footnotesize}
{\unitlength 0.1in%
\begin{picture}(42.9500,9.3500)(14.0000,-21.3500)%
%
\special{pn 4}%
\special{sh 1}%
\special{ar 2000 1800 16 16 0 6.2831853}%
\special{sh 1}%
\special{ar 2000 1800 16 16 0 6.2831853}%
%
\special{pn 13}%
\special{pa 2000 1800}%
\special{pa 1400 1800}%
\special{fp}%
\special{pa 2000 1800}%
\special{pa 2600 1800}%
\special{fp}%
%
\special{pn 4}%
\special{sh 1}%
\special{ar 1600 1200 8 8 0 6.2831853}%
\special{sh 1}%
\special{ar 2400 1200 8 8 0 6.2831853}%
\special{sh 1}%
\special{ar 2400 1200 8 8 0 6.2831853}%
%
\special{pn 8}%
\special{pa 2400 1200}%
\special{pa 2000 1800}%
\special{fp}%
\special{pa 2000 1800}%
\special{pa 1600 1200}%
\special{fp}%
\special{pa 1600 1200}%
\special{pa 2400 1200}%
\special{fp}%
%
\special{pn 4}%
\special{sh 1}%
\special{ar 4200 1800 16 16 0 6.2831853}%
\special{sh 1}%
\special{ar 5000 1800 16 16 0 6.2831853}%
\special{sh 1}%
\special{ar 5600 1600 16 16 0 6.2831853}%
\special{sh 1}%
\special{ar 3600 1600 16 16 0 6.2831853}%
\special{sh 1}%
\special{ar 3600 1600 16 16 0 6.2831853}%
%
\special{pn 13}%
\special{pa 3600 1600}%
\special{pa 4200 1800}%
\special{fp}%
\special{pa 4200 1800}%
\special{pa 5000 1800}%
\special{fp}%
\special{pa 5000 1800}%
\special{pa 5600 1600}%
\special{fp}%
%
\special{pn 4}%
\special{sh 1}%
\special{ar 4600 1200 8 8 0 6.2831853}%
\special{sh 1}%
\special{ar 4600 1200 8 8 0 6.2831853}%
%
\special{pn 8}%
\special{pa 4600 1200}%
\special{pa 4600 1200}%
\special{fp}%
\special{pa 4200 1800}%
\special{pa 4600 1200}%
\special{fp}%
\special{pa 4600 1200}%
\special{pa 5000 1800}%
\special{fp}%
\begin{normalsize}
\put(46.0000,-22.0000){\makebox(0,0){(e-2)}}%
\put(20.0000,-22.0000){\makebox(0,0){(e-1)}}%
\end{normalsize}
\put(20.0000,-19.6000){\makebox(0,0){$S_i = S_{i+1}$}}%
\put(28.0000,-18.0000){\makebox(0,0){$G'$}}%
\put(58.0000,-16.0000){\makebox(0,0){$G'$}}%
\put(20.0000,-14.0000){\makebox(0,0){$F_i$}}%
\put(17.0000,-14.6000){\makebox(0,0)[rt]{$B_i$}}%
\put(22.8000,-14.6000){\makebox(0,0)[lt]{$B_{i+1}$}}%
\put(46.0000,-15.7000){\makebox(0,0){$F_i$}}%
\put(43.2000,-15.5000){\makebox(0,0)[rb]{$B_i$}}%
\put(48.7000,-15.5000){\makebox(0,0)[lb]{$B_{i+1}$}}%
\put(41.6000,-19.5000){\makebox(0,0){$S_i$}}%
\put(50.4000,-19.5000){\makebox(0,0){$S_{i+1}$}}%
\put(46.0000,-18.9000){\makebox(0,0){$E_i$}}%
\end{picture}}%
\end{footnotesize}
\end{center}

\vspace{5mm}

Suppose $i$ satisfies (e-1). 
Then $S_i = S_{i+1}$ and $S_i$ is the common vertex of $B_i$ and $B_{i+1}$. 
Therefore $(B_i, S_i)$ and $(B_{i+1}, S_{i+1})$ satisfy (d-1). 

Suppose $i$ satisfies (e-2). 
Then there exists an edge $E_i \in G'$ such that $E_i, B_i, B_{i+1} < F_i$. 
Then $S_i$ (resp.\ $S_{i+1}$) is the common vertex of $E_i$ and $B_i$ (resp.\ $E_i$ and $B_{i+1}$). 
Then $(B_i, S_i)$ and $(B_{i+1}, S_{i+1})$ satisfy (d-2).
\end{proof}

\begin{prop}\label{prop:normality}
Let $k$ be an algebraic closed field of characteristic $p > 5$. 
Let $(X, \Delta)$ be a three-dimensional projective log pair over $k$ with $-(K_X + \Delta)$ nef and big. 
Suppose that $\mathrm{Nklt}(X, \Delta)$ is of pure dimension one. 
Then for each irreducible component $C_0$ of $\mathrm{Nklt}(X, \Delta)$, 
its normalization $\overline{C_0} \to C_0$ is a universal homeomorphism. 
\end{prop}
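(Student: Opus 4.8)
The plan is to reduce the statement to a branch-counting problem and then feed in the topology of the dual complex. Write $\nu\colon\overline{C_0}\to C_0$ for the normalization. Since $\nu$ is a finite birational morphism of varieties over the algebraically closed field $k$, it is automatically universally injective: the induced function field extension is trivial, and every residue field extension at a closed point is trivial. Hence $\nu$ is a universal homeomorphism precisely when it is bijective, i.e.\ when $\#\nu^{-1}(x)=1$ for every closed point $x\in C_0$, and this is what I would aim to prove.

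Next, by Theorem \ref{thm:nefbig} and Theorem \ref{thm:dltmodif} I would choose a dlt blow-up $f\colon(Y,\Delta_Y)\to(X,\Delta)$ with $f^{-1}(C)=\mathrm{Nklt}(Y,\Delta_Y)=\mathrm{Supp}\,\Delta_Y^{\ge 1}$ and with $G:=\mathcal D(\Delta_Y^{\ge 1})$ contractible, so that Proposition \ref{prop:G1} applies: the subcomplex $G'\subset G$ of strata of $\Delta_Y^{=1}$ dominating $C_0$ is connected of dimension at most one. Since $C_0$ has dimension one while $\mathrm{Supp}\,\Delta_Y^{\ge 1}$ is a divisor, $f$ cannot be an isomorphism over the generic point of $C_0$, so there is a prime component $D_0\subset\Delta_Y^{=1}$ with $f(D_0)=C_0$; by Proposition \ref{prop:dltstrata}, $D_0$ is normal. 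I would then Stein-factor $f|_{D_0}$ as $D_0\xrightarrow{\,g_0\,}T_0\xrightarrow{\,s_0\,}C_0$; as $T_0$ is normal, $s_0=\nu\circ s'_0$ for a morphism $s'_0\colon T_0\to\overline{C_0}$ which is finite and surjective (the surjectivity because $\nu$ is finite and $D_0$ dominates $C_0$).

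The key construction is a labelling of the components of $f^{-1}(x)$ that meet the divisors over $C_0$. For every prime $D_j\subset\Delta_Y^{=1}$ with $f(D_j)=C_0$, Stein-factoring as above gives $D_j\xrightarrow{g_j}T_j\xrightarrow{s_j}C_0$ with $s_j=\nu\circ s'_j$ and with $g_j$ having connected fibres; hence the connected components of $D_j\cap f^{-1}(x)$ are exactly the $g_j$-fibres over the finitely many points of $s_j^{-1}(x)$, and to such a component $W$ I attach $\beta_j(W):=s'_j\bigl(g_j(W)\bigr)\in\nu^{-1}(x)$. Because $s'_0$ is surjective, the labels $\beta_0(W)$ already exhaust $\nu^{-1}(x)$. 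The edges of $G'$ are codimension-two strata of $\Delta_Y^{=1}$ dominating $C_0$, hence irreducible curves surjecting onto $C_0$, and therefore meet $f^{-1}(x)$; combined with the connectedness of $G'$ this shows $\Sigma:=\bigcup_{f(D_j)=C_0}\bigl(D_j\cap f^{-1}(x)\bigr)$ is connected. So it suffices to prove the labels are \emph{locally constant}: $\beta_j(W)=\beta_{j'}(W')$ whenever $W\subset D_j$, $W'\subset D_{j'}$ are components of the above type with $W\cap W'\neq\emptyset$; then all labels agree and $\nu^{-1}(x)$ is a single point.

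For local constancy, choose a stratum $S\subset D_j\cap D_{j'}$ meeting $W\cap W'$; since $S\subset D_j$ with $f(D_j)=C_0$ one has $f(S)\subset C_0$. If $f(S)=C_0$, then $S$ is an edge of $G'$, the map $S\to C_0$ factors uniquely through $\overline{C_0}$, and the compatibility of this factorization with $D_j\to T_j\xrightarrow{s'_j}\overline{C_0}$ and with $D_{j'}\to T_{j'}\xrightarrow{s'_{j'}}\overline{C_0}$ gives $\beta_j(W)=\beta_{j'}(W')$ at once. The delicate case — which I expect to be the main obstacle — is when $W\cap W'$ meets only strata $S\subset D_j\cap D_{j'}$ contracted by $f$ to the point $x$; then $S$ is an edge of $\mathrm{st}(G',G)\setminus G'$ with both vertices $D_j,D_{j'}$ in $G'$, and the factorization through $\overline{C_0}$ is no longer forced, so the labels cannot be compared directly. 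Here one invokes Proposition \ref{prop:G2}: applied to the two pairs $(S,D_j)$ and $(S,D_{j'})$ it produces a chain joined by the moves (d-1) and (d-2) through $2$-simplices of $G$; along each move the relevant edges lie on a common $2$-simplex, corresponding to a point of $f^{-1}(x)$ on three components of $\Delta_Y^{=1}$ one of which dominates $C_0$, and the compatibility of the maps $D_i\to T_i\xrightarrow{s'_i}\overline{C_0}$ along common strata propagates the label unchanged along the chain, giving $\beta_j(W)=\beta_{j'}(W')$. Combining this with the previous case yields $\#\nu^{-1}(x)=1$ for every $x$, hence $\nu$ is bijective and therefore a universal homeomorphism.
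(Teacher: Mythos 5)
Your overall strategy coincides with the paper's: pass to a dlt blow-up with contractible dual complex, label the connected components of $D_j\cap f^{-1}(x)$ (for $D_j$ dominating $C_0$) by points of $\nu^{-1}(x)$ via Stein factorization, and propagate the label using Proposition \ref{prop:G2}. But there is a genuine gap at the pivot of your argument: the claim that $\Sigma=\bigcup_{f(D_j)=C_0}\bigl(D_j\cap f^{-1}(x)\bigr)$ is connected. Connectedness of $G'$ together with the fact that every edge of $G'$ meets $f^{-1}(x)$ only tells you that the sets $D_j\cap f^{-1}(x)$ pairwise chain together; it does not make their union connected, because each individual $D_j\cap f^{-1}(x)=\bigsqcup_{q\in s_j^{-1}(x)}g_j^{-1}(q)$ is itself disconnected precisely when $s_j^{-1}(x)$ has more than one point --- which, since $s'_j$ is surjective onto $\overline{C_0}$, happens exactly when $\#\nu^{-1}(x)\ge 2$, i.e.\ in the situation you are trying to rule out. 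So ``$\Sigma$ is connected'' is essentially equivalent to the conclusion, not a tool for reaching it. What is actually available is that $f^{-1}(x)$ is connected, and the pieces of $\Sigma$ carrying the labels $p^{(1)}$ and $p^{(2)}$ may a priori be joined only through exceptional divisors that do \emph{not} dominate $C_0$.

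Handling that passage is the real content of the paper's proof, and your proposal does not address it. Your ``delicate case'' only treats two components $W,W'$ that literally intersect along a common contracted stratum $S$, i.e.\ you apply Proposition \ref{prop:G2} to two pairs $(S,D_j)$, $(S,D_{j'})$ with the \emph{same} edge. The paper instead starts from a chain of divisors not dominating $C_0$ linking $S_{p^{(1)}}$ to $S'_{p^{(2)}}$, extracts from it two possibly \emph{different} pairs $(B,S)$, $(B',S')$ whose edges merely meet the same connected component $U'$ of $U=|G|\setminus|G'|$, and only then joins them by (d-1)/(d-2) moves; this is where Proposition \ref{prop:G1}(2) (the Mayer--Vietoris step, which is the place contractibility of $|G|$ enters) and the full strength of Proposition \ref{prop:G2} are indispensable. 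Without this, components of $\Sigma$ with labels $p^{(1)}$ and $p^{(2)}$ that never touch directly are never compared, and no contradiction is reached. (A minor separate point: the normalization of a nodal curve is finite and birational but not universally injective, so your opening sentence should say that \emph{once} $\nu$ is shown to be bijective it is radicial and hence a universal homeomorphism; the reduction itself is fine.)
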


\begin{proof}
Set $C := \mathrm{Nklt}(X, \Delta)$. 
By contradiction, suppose that $p \in C_0$ is a singular point such that 
the normalization $\overline{C_0} \to C_0$ is not a universal homeomorphism around $p$. 
Let $p^{(1)}, \ldots , p^{(m)}$ be the inverse image of $p$. 
By the assumption, $m \ge 2$. 

Let $f: (Y, \Delta_Y) \to (X, \Delta)$ be a dlt blow-up such that 
$\mathrm{Supp}\, \Delta _Y ^{\ge 1} = f^{-1}(C)$ (Theorem \ref{thm:dltmodif}). 
Let $G = \mathcal{D}(\Delta _Y ^{\ge 1})$. 
We may assume that $G$ is contractible by Theorem \ref{thm:nefbig}. 

We define $\{ S_i \}_{i \in I}$ and $\{ T_j \}_{j \in J}$ as follows:
\begin{itemize}
\item Let $\{ S_i \}_{i \in I}$ be the set of the irreducible components of $\Delta _Y ^{\ge 1}$ which dominate $C_0$. 
\item Let $\{ T_j \}_{j \in J}$ be the set of the irreducible components $T_j$ of $\Delta _Y ^{\ge 1}$ which do not dominate $C_0$ but $p \in f(T_j)$. 
\end{itemize}

For each $S \in \{S_i \}_{i \in I}$, since $S$ is normal and dominates $C_0$, 
$S \to C_0$ factors through $S \to \overline{C_0} \to C_0$. 
We denote by $S_{p^{(k)}}$ the fiber $S \to \overline{C_0}$ over $p^{(k)}$. 
Obviously, we have 
\begin{itemize}
\item[(1)] $S_{p^{(k)}} \cap S_{p^{(j)}} = \emptyset$ holds for each $k \not = j$. 
\end{itemize}

On the other hand, $f^{-1}(p)$ is connected. 
Since $f^{-1}(p) \cap T \not = \emptyset$ for each $T \in \{ T_j \}_{j \in J}$, 
\[
\left( \bigcup_{i \in I,\ k} S_{i, p^{(k)}} \right) \cup \left( \bigcup _{j \in J} T_j \right)
= 
f^{-1}(p) \cup \bigcup _{j \in J} T_j
\]
is also connected. 
Hence, by (1), possibly changing the indices of $p^{(1)}, \ldots , p^{(m)}$, we can conclude the following: 
\begin{itemize}
\item[(2)] There exist $S, S' \in \{ S_i \} _{i \in I}$ and a sequence $T_1, \ldots , T_{\ell} \in \{ T_j \}_{j \in J}$ 
with $\ell \ge 0$ 
such that
\[
S_{p^{(1)}} \cap T_1 \not = \emptyset,\ T_1 \cap T_2 \not = \emptyset, \ \ldots ,\ T_{\ell - 1} \cap T_{\ell} \not = \emptyset,\ 
T_{\ell} \cap S'_{p^{(2)}} \not = \emptyset. 
\]
\end{itemize}
We shall rephrase (2) into a combinatorial condition (STEP 1) and lead a contradiction (STEP 2, 3). 

For STEP 1, we introduce some notations which are same as in Proposition \ref{prop:G1}. 
Let $G'$ be the subcomplex of $G$ which consists of the strata of $\Delta _Y ^{\ge 1}$ which dominate $C_0$. 
Let $U := |G| \setminus |G'|$ be the open subset of $|G|$, and 
let $V$ be a sufficiently small open neighborhood of $G'$. 

\vspace{2mm}

\noindent
\textbf{STEP 1. }\ 
In this step, we prove the following statement from the condition (2). 
\begin{itemize}
\item[(3)] There exist a connected component $U'$ of $U$, 
and pairs $(B, S)$ and $(B', S')$ with the condition (a) and (b) in Proposition \ref{prop:G2}
such that $S_{p^{(1)}} \cap B \not= \emptyset$ and $S' _{p^{(2)}} \cap B' \not = \emptyset$ hold. 
Here we allow $B = B'$ and $S=S'$. 
\end{itemize}

Suppose $\ell = 0$ in (2), that is, $S_{p^{(1)}} \cap S' _{p^{(2)}} \not = \emptyset$. 
Then, we can take a one dimensional stratum $B \subset S \cap S'$ such that 
$S_{p^{(1)}} \cap S' _{p^{(2)}} \cap B \not= \emptyset$ holds. 
If $B$ dominates $C_0$, then $B \to C_0$ also factors through $\overline{C_0}$ since $B$ is normal. 
We define $B_{p^{(1)}}$ and $B_{p^{(2)}}$ as the fibers of $B \to \overline{C_0}$ over $p^{(1)}$ and $p^{(2)}$. 
Then $S_{p^{(1)}} \cap B = B_{p^{(1)}}$ and 
$S' _{p^{(2)}} \cap B = B_{p^{(2)}}$ hold, and they contradict the fact that $B_{p^{(1)}} \cap B_{p^{(2)}} = \emptyset$. 
Therefore, $B$ does not dominate $C_0$, that is, $B \not \in G'$. 
Hence the condition (3) holds when we set $B' = B$. 

Suppose $\ell \ge 1$ in (2). Then 
we can take a one dimensional stratum $B \subset S \cap T_1$ (resp.\ $B' \subset T_{\ell} \cap S'$) such that 
$S_{p^{(1)}} \cap B \not =  \emptyset$ (resp.\ $S' _{p^{(2)}} \cap B' \not = \emptyset$). 
Since $T_1$ and $T_{\ell}$ do not dominate $C_0$, neither do $B$ and $B'$, hence $B, B' \not \in G'$. 
Moreover, $T_1 \cup \cdots \cup T_{\ell}$ is connected, $B$ and $B'$ have intersection with a same connected component $U'$ of $U$. 

We have proved (3). Here, we note that the condition $S_{p^{(1)}} \cap B \not= \emptyset$ 
(resp.\ $S' _{p^{(2)}} \cap B' \not = \emptyset$) in (3) actually implies that 
$B \subset S_{p^{(1)}}$ (resp.\ $B' \subset S' _{p^{(2)}}$). 
Indeed, it follows from the facts that $B \subset S$ (resp. $B' \subset S'$) and 
that $S$ (resp.\ $S'$) dominates $C_0$, but $B$ (resp.\ $B'$) does not dominate $C_0$. 
Therefore we conclude the following. 
\begin{itemize}
\item[(4)] There exist a connected component $U'$ of $U$, 
and pairs $(B, S)$ and $(B, S')$ with the condition (a) and (b) in Proposition \ref{prop:G2} such that 
$B \subset S_{p^{(1)}}$ and $B' \subset S' _{p^{(2)}}$ hold. 
Here we allow $B = B'$ and $S=S'$. 
\end{itemize}

\vspace{2mm}
\noindent
\textbf{STEP 2.}\ 
Let $(B, S), (B', S')$ be pairs which satisfy (a) and (b) in Proposition \ref{prop:G2}. 
Suppose that one of the following conditions holds: 
\begin{itemize}
\item[(d-1)] $S = S'$ holds, and $B, B' < F$ for some $2$-simplex $F \in G$. 
\item[(d-2)] $S \not = S'$ holds, and $S, S' < E$ for some edge $E \in G'$. 
Further, $B, B', E < F$ for some $2$-simplex $F \in G$. 
\end{itemize}
In this step, we prove the condition $B \subset S_{p^{(1)}}$ implies $B' \subset S' _{p^{(1)}}$. 

\begin{center}
\begin{footnotesize}
{\unitlength 0.1in%
\begin{picture}(42.9500,9.3500)(14.0000,-21.3500)%
%
\special{pn 4}%
\special{sh 1}%
\special{ar 2000 1800 16 16 0 6.2831853}%
\special{sh 1}%
\special{ar 2000 1800 16 16 0 6.2831853}%
%
\special{pn 13}%
\special{pa 2000 1800}%
\special{pa 1400 1800}%
\special{fp}%
\special{pa 2000 1800}%
\special{pa 2600 1800}%
\special{fp}%
%
\special{pn 4}%
\special{sh 1}%
\special{ar 1600 1200 8 8 0 6.2831853}%
\special{sh 1}%
\special{ar 2400 1200 8 8 0 6.2831853}%
\special{sh 1}%
\special{ar 2400 1200 8 8 0 6.2831853}%
%
\special{pn 8}%
\special{pa 2400 1200}%
\special{pa 2000 1800}%
\special{fp}%
\special{pa 2000 1800}%
\special{pa 1600 1200}%
\special{fp}%
\special{pa 1600 1200}%
\special{pa 2400 1200}%
\special{fp}%
%
\special{pn 4}%
\special{sh 1}%
\special{ar 4200 1800 16 16 0 6.2831853}%
\special{sh 1}%
\special{ar 5000 1800 16 16 0 6.2831853}%
\special{sh 1}%
\special{ar 5600 1600 16 16 0 6.2831853}%
\special{sh 1}%
\special{ar 3600 1600 16 16 0 6.2831853}%
\special{sh 1}%
\special{ar 3600 1600 16 16 0 6.2831853}%
%
\special{pn 13}%
\special{pa 3600 1600}%
\special{pa 4200 1800}%
\special{fp}%
\special{pa 4200 1800}%
\special{pa 5000 1800}%
\special{fp}%
\special{pa 5000 1800}%
\special{pa 5600 1600}%
\special{fp}%
%
\special{pn 4}%
\special{sh 1}%
\special{ar 4600 1200 8 8 0 6.2831853}%
\special{sh 1}%
\special{ar 4600 1200 8 8 0 6.2831853}%
%
\special{pn 8}%
\special{pa 4600 1200}%
\special{pa 4600 1200}%
\special{fp}%
\special{pa 4200 1800}%
\special{pa 4600 1200}%
\special{fp}%
\special{pa 4600 1200}%
\special{pa 5000 1800}%
\special{fp}%
\begin{normalsize}
\put(46.0000,-22.0000){\makebox(0,0){(d-2)}}%
\put(20.0000,-22.0000){\makebox(0,0){(d-1)}}%
\end{normalsize}
\put(20.0000,-19.6000){\makebox(0,0){$S = S'$}}%
\put(28.0000,-18.0000){\makebox(0,0){$G'$}}%
\put(58.0000,-16.0000){\makebox(0,0){$G'$}}%
\put(20.0000,-14.0000){\makebox(0,0){$F$}}%
\put(17.0000,-14.6000){\makebox(0,0)[rt]{$B$}}%
\put(22.8000,-14.6000){\makebox(0,0)[lt]{$B'$}}%
\put(46.0000,-15.7000){\makebox(0,0){$F$}}%
\put(43.2000,-15.5000){\makebox(0,0)[rb]{$B$}}%
\put(48.7000,-15.5000){\makebox(0,0)[lb]{$B'$}}%
\put(41.6000,-19.5000){\makebox(0,0){$S$}}%
\put(50.4000,-19.5000){\makebox(0,0){$S'$}}%
\put(46.0000,-19.0000){\makebox(0,0){$E$}}%
\end{picture}}%
\end{footnotesize}
\end{center}

\vspace{5mm}

Suppose (d-1). $S=S'$ in this case. Since $B, B' < F$ for some $2$-simplex $F \in G$, 
it follows that $B \cap B' \not= \emptyset$. 
Since $B \subset S_{p^{(1)}}$, it holds that $B' \cap S_{p^{(1)}} \not = \emptyset$. 
Then $B' \subset S_{p^{(1)}}$ holds because of 
the facts that $B' \subset S$ and 
that $S$ dominates $C_0$, but $B'$ does not dominate $C_0$.

Suppose (d-2). 
Since $B, B', E < F$ for some $2$-simplex $F \in G$, 
$B \cap B' \cap E \not = \emptyset$ holds. 
Since $B \subset S_{p^{(1)}}$, 
it follows that 
\[
S_{p^{(1)}} \cap B' \cap E \not = \emptyset. 
\]
Here we note that $E$ dominates $C_0$ and hence $E \to C_0$ factors through $\overline{C_0}$ because $E$ is normal. 
We denote by $E_{p^{(1)}}$ the fiber of $E \to \overline{C_0}$ over $p^{(1)}$. 
Since $E \subset S, S'$, it holds that 
\[
S_{p^{(1)}} \cap E = E_{p^{(1)}} \subset S' _{p^{(1)}}. 
\]
Hence we obtain that $B' \cap S' _{p^{(1)}} \not = \emptyset$. 
It implies that $B' \subset S' _{p^{(1)}}$ by the same reason as before. 

\vspace{2mm}
\noindent
\textbf{STEP 3.}\ In this step, we assume the condition (4) in STEP 1, and lead a contradiction. 

Let $(B, S), (B', S')$ be the pairs in (4) in STEP 1. 
Then $(B, S), (B', S')$ satisfy the condition (a), (b) in Proposition \ref{prop:G2}. 
Hence by Proposition \ref{prop:G2}, 
we can take a sequence $(B, S) = (B_0, S_0), (B_1, S_1), \ldots , (B_k, S_k)=(B', S')$ as in Proposition \ref{prop:G2}. 

By STEP 2 and the assumption that $B \subset S_{p^{(1)}}$, 
it follows that $B_k \subset S_{k, p^{(1)}}$ for each $k$ by induction. 
Therefore $B' \subset S'_{p^{(1)}}$ holds and it contradicts the assumption that $B' \subset S' _{p^{(2)}}$ 
and the fact that $S'_{p^{(1)}} \cap S'_{p^{(2)}} = \emptyset$. 
\end{proof}

In order to state Proposition \ref{prop:tree}, we introduce a notation. 

\begin{defi}\label{def:tree}
Let $C$ be a scheme of finite type of pure dimension one over an algebraic closed field $k$. 
Let $C = C_1 \cup C_2 \cup \cdots \cup C_{\ell}$ be the irreducible decomposition. 
We define whether $C$ \textit{forms a tree} or not 
by induction on the number $\ell$ of the irreducible components. 

Any irreducible curve \textit{forms a tree}. 
We call that the union of irreducible curves $C = C_1 \cup C_2 \cup \cdots \cup C_{\ell}$ \textit{forms a tree}, 
if there exists $i$ such that $C' = \bigcup _{j \not = i} C_j$ forms a tree and 
$\# (C' \cap C_i) = 1$. 
\end{defi}

First, we prepare some notation in combinatorics. 

\begin{defi}
For a sequence $(a_1, \ldots , a_n)$ of length $n$, we define the operations (a), (b) as follows. 
Here we define $a_{n+1} := a_1$ and $a_{n+2} := a_2$ by convention. 
\begin{itemize}
\item[(a)] If $a_i = a_{i+1}$ for some $1 \le i \le n$, we remove $a_i$ and 
get a new sequence $(a_1, \ldots , a_{i-1}, a_{i+1}, \ldots , a_n)$ of length $n-1$. 
\item[(b)] If $a_i = a_{i+2}$ for some $1 \le i \le n$, then we remove $a_i$ and $a_{i+1}$, and 
get a new sequence $(a_1, \ldots , a_{i-1}, a_i = a_{i+2} ,a_{i+3},  \ldots , a_n)$ of length $n-2$. 
\end{itemize}
For a sequence $(a_1, \ldots , a_n)$ of length $n$, applying the operation (a) (resp.\ the operations (a) and (b)) repeatedly, 
we get a sequence $(b_1, \ldots , b_m)$ with the condition that $b_i \not = b_{i+1}$ for each $i$ 
(resp. the condition that $b_i \not = b_{i+1}$ and $b_i \not = b_{i+2}$). 
We call such $(b_1, \ldots , b_m)$ \textit{the (a)-reduction} (resp. \textit{(a,b)-reduction}) of 
$(a_1, \ldots , a_n)$. 

We say that a sequence $(a_1, \ldots, a_n)$ has the \textit{trivial (a)-reduction} 
(resp.\ \textit{trivial (a,b)-reduction}) if the (a)-reduction (resp.\ the (a,b)-reduction) has length $0$. 
We note here that the (a)-reduction of a sequence of length $1$ has length $0$ by definition. 
\end{defi}
\begin{defi}\label{defi:curve_seq}
Let $C = C_1 \cup C_2 \cup \cdots \cup C_{\ell}$ be in Definition \ref{def:tree}. 
We call that a sequence $(a_1, \ldots, a_n)$ is a \textit{cycle sequence} if the following conditions (1), (2) hold. 
\begin{itemize}
\item[(1)] Each $a_i$ is an irreducible component of $C$ or a closed point on $C$. 
\item[(2)] The (a)-reduction $(b_1, \ldots , b_m)$ of $(a_1, \ldots , a_n)$ satisfies the following conditions:
\begin{itemize}
\item[(2-1)] $\dim b_i \not = \dim b_{i+1}$ for each $1 \le i \le m$. We set $b_{m+1} = b_1$ by convention. 
\item[(2-2)] If $\dim b_i = 0$, then $b_i \in b_{i-1} \cap b_{i+1}$. We set $b_0 = b_m$ by convention.  
\end{itemize}
\end{itemize}
Moreover, we say that a cycle sequence $(a_1, \ldots , a_n)$ is \textit{trivial} 
if it has the trivial (a,b)-reduction. 
\end{defi}

\begin{ex}
Consider curves $C=C_1 \cup \cdots \cup C_5$ in the following figure. 

\vspace{3mm}

\begin{center}
\begin{footnotesize}
{\unitlength 0.1in%
\begin{picture}(32.9000,11.1100)(2.6000,-15.9100)%
%
\special{pn 4}%
\special{sh 1}%
\special{ar 800 984 10 10 0 6.2831853}%
\special{sh 1}%
\special{ar 1400 1404 10 10 0 6.2831853}%
\special{sh 1}%
\special{ar 2200 704 10 10 0 6.2831853}%
\special{sh 1}%
\special{ar 2200 1404 10 10 0 6.2831853}%
\special{sh 1}%
\special{ar 3200 1124 10 10 0 6.2831853}%
\special{sh 1}%
\special{ar 3200 1124 10 10 0 6.2831853}%
%
\special{pn 8}%
\special{pa 600 844}%
\special{pa 1600 1544}%
\special{fp}%
%
\special{pn 8}%
\special{pa 1240 1544}%
\special{pa 1240 1544}%
\special{fp}%
\special{pa 1240 1544}%
\special{pa 2400 529}%
\special{fp}%
%
\special{pn 8}%
\special{pa 2200 529}%
\special{pa 2200 494}%
\special{fp}%
\special{pa 2200 1579}%
\special{pa 2200 480}%
\special{fp}%
%
\special{pn 8}%
\special{pa 3500 1250}%
\special{pa 3500 1250}%
\special{fp}%
\special{pa 3500 1250}%
\special{pa 1900 578}%
\special{fp}%
\special{pa 1950 1474}%
\special{pa 3500 1040}%
\special{fp}%
\put(7.4000,-10.0500){\makebox(0,0)[rt]{$P_1$}}%
\put(14.0000,-13.0600){\makebox(0,0){$P_2$}}%
\put(5.7000,-8.3000){\makebox(0,0)[rb]{$C_1$}}%
\put(12.1000,-15.6500){\makebox(0,0)[rt]{$C_2$}}%
\put(22.0000,-16.5600){\makebox(0,0){$C_3$}}%
\put(35.4000,-12.7100){\makebox(0,0)[lt]{$C_4$}}%
\put(35.5000,-10.2600){\makebox(0,0)[lb]{$C_5$}}%
\put(23.3000,-7.1800){\makebox(0,0)[lb]{$P_3$}}%
\put(32.0000,-10.2600){\makebox(0,0){$P_4$}}%
\put(22.5000,-14.1800){\makebox(0,0)[lt]{$P_5$}}%
\end{picture}}%
\end{footnotesize}
\end{center}

\vspace{5mm}

\noindent
Set sequences $Q_1, Q_2$ as follows: 
\begin{align*}
Q_1&=(P_1, C_1, P_2, C_2, P_3, C_3, P_3, C_4, P_4, C_5, P_5, C_3, P_3, C_2, P_2, C_1), \\
Q_2&=(P_1, C_1, P_2, C_2, P_3, C_3, P_3, C_4, P_4, C_5, P_5, C_5, P_4, C_4, P_3, C_2, P_2, C_1). 
\end{align*}
Their (a)-reductions are themselves. 
These two sequences satisfy the condition (2-1), (2-2), hence they are cycle sequences. 
$Q_1$ is not trivial, but $Q_2$ is trivial. 
Indeed the (a,b)-reduction of $Q_1$ is $(C_3, P_3, C_4, P_4, C_5, P_5)$. 
\end{ex}

A typical example of cycle sequences we will see in the proof of Proposition \ref{prop:tree} is as follows. 

\begin{ex}\label{ex:edge_loop}
Let $k$ be an algebraic closed field of characteristic $p > 5$. 
Let $(X, \Delta)$ be a three-dimensional projective log pair over $k$ with $-(K_X + \Delta)$ nef and big. 
Suppose that $\mathrm{Nklt}(X, \Delta)$ is pure dimension one. 
Let $f: (Y, \Delta_Y) \to (X, \Delta)$ be a dlt blow-up, 
and let $G := \mathcal{D} (\Delta _Y ^{\ge 1})$ be the dual complex of $\Delta _Y ^{\ge 1}$. 
Then $G$ is a regular $\Delta$-complex by Proposition \ref{prop:regular}. 

For an edge $C$ in $G$, its two vertices $S$ and $S'$ are distinct because $G$ is regular. 
We denote by $C(S,S')$ the oriented $1$-cell corresponding to $C$ with initial point $S$ and final point $S'$. 

When we write
\[
P: S_1 \overset{C_1}{\longrightarrow} S_2 \overset{C_2}{\longrightarrow} \cdots 
\overset{C_{n-1}}{\longrightarrow} S_n  \overset{C_n}{\longrightarrow} S_{n+1}, 
\]
we assume the following condition. 
\begin{itemize}
\item For each $1 \le i \le n$, $S_i \not = S_{i+1}$ holds and $S_i$ and $S_{i+1}$ are the two vertices of $C_i$. 
\end{itemize}
We denote by $P$ the edge path obtained by joining the oriented $1$-cell $C_1(S_1, S_2), \ldots, C_n(S_n, S_{n+1})$. 
The edge path $P$ is called an \textit{edge loop} when $S_1 = S_{n+1}$. 

Let $P$ as above be an edge loop in $G$. 
Then $(f(S_1), f(C_1), \ldots , f(S_n), f(C_n))$ is a cycle sequence because $f(S_{n-1}) \supset f(C_n) \subset f(S_n)$ holds. 
We say that the sequence $(f(S_1), f(C_1), \ldots , f(S_n), f(C_n))$ is 
the \textit{image} of the edge loop $P$ for simplicity. 
\end{ex}

\begin{lem}\label{lem:cycle_seq}
Let $C = C_1 \cup C_2 \cup \cdots \cup C_{\ell}$ be in Definition \ref{def:tree}. 
Suppose that $C$ is connected but does not form a tree. 
Then there exists a non-trivial cycle sequence $(a_1, \ldots , a_n)$ such that $a_i$'s are distinct.
\end{lem}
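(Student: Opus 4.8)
The plan is to set up an induction on the number $\ell$ of irreducible components of $C$, exploiting the inductive structure of the definition of ``forms a tree''. The base case $\ell = 1$ is vacuous, since a single irreducible curve always forms a tree, so there is nothing to prove. For the inductive step, I would first look for a component that ``can be peeled off'': if some $C_i$ meets $C' := \bigcup_{j \ne i} C_j$ in exactly one point and $C'$ is still connected, then by the definition of forming a tree, the only way $C$ fails to form a tree is that $C'$ already fails to form a tree; if moreover $C'$ is connected, we apply the induction hypothesis to $C'$ to get a non-trivial cycle sequence with distinct entries, which is automatically a non-trivial cycle sequence for $C$ as well (a cycle sequence for $C'$ only uses components and points of $C'$, and the conditions (1), (2-1), (2-2) are unchanged when we enlarge the ambient curve).

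The heart of the argument is therefore the case where no component can be peeled off in this way, i.e.\ for every $i$ either $C' = \bigcup_{j\ne i} C_j$ is disconnected or $\#(C' \cap C_i) \ge 2$. In that situation I would argue that $C$ contains a genuine ``loop'' of components: starting from any component $C_{i_1}$, repeatedly move to a new component sharing a point with the part traversed so far; since $C$ is connected, one can keep going, and since $C$ does not form a tree (equivalently, by the failure of the peeling condition at every stage), one eventually returns to a component already visited, closing a loop $C_{i_1}, P_1, C_{i_2}, P_2, \ldots, C_{i_t}, P_t$ with $P_s \in C_{i_s} \cap C_{i_{s+1}}$ (indices cyclic), where all the $C_{i_s}$ are distinct. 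One then checks that the associated sequence $(C_{i_1}, P_1, C_{i_2}, P_2, \ldots, C_{i_t}, P_t)$ satisfies conditions (2-1) and (2-2): the alternation of dimensions is built in, each point $P_s$ lies in $C_{i_s} \cap C_{i_{s+1}}$ by construction, and the $(a)$-reduction is the sequence itself since consecutive entries differ in dimension. Non-triviality, i.e.\ that the $(a,b)$-reduction is not empty, needs the points $P_s$ to be pairwise distinct or, more robustly, needs the loop to be ``essential''; I would choose the loop of minimal length among all loops with distinct components, which forces the $P_s$ to be distinct as well (a repeated point would let us shortcut to a shorter loop), so that operation (b) never applies and the $(a,b)$-reduction equals the full sequence, which has positive length.

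The main obstacle I anticipate is the second case: making precise the passage from ``$C$ is connected but does not form a tree'' to ``$C$ contains a minimal essential loop of distinct components'', and verifying that such a minimal loop indeed yields a cycle sequence whose $(a,b)$-reduction is non-trivial. The subtlety is that the definition of ``forms a tree'' is recursive and only guarantees the existence of \emph{some} peelable component at each stage, so one must argue carefully by a minimal-counterexample or minimal-length argument rather than a direct construction; one should also be careful that when two components meet in more than one point, the loop one extracts genuinely uses two distinct points of intersection, which is what prevents the $(a,b)$-reduction from collapsing. I expect this to be handled by a clean combinatorial/graph-theoretic lemma: associate to $C$ the graph with a vertex per component and an edge per intersection point (with multiplicities), observe that $C$ forms a tree if and only if this graph is a tree, and then a non-tree connected graph has a cycle, which translates back into the desired cycle sequence.
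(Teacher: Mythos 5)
Your proposal is correct and takes essentially the same route as the paper's proof: your peeling induction supplies exactly what the paper asserts without justification, namely a subcollection of components each of which meets the union of the others in at least two points (the paper's condition (3)), and your minimal-loop argument is an equivalent packaging of the paper's non-backtracking walk (alternately choosing a component and then a new intersection point on it) followed by extracting the segment between the first repeated entry. One caution about your closing aside: the graph with ``an edge per intersection point'' is not well defined, and the claimed equivalence with forming a tree fails, when three or more components pass through a single point --- such a configuration does form a tree in the sense of Definition \ref{def:tree} even though the pairwise intersections would produce a cycle in that graph; the correct model is the bipartite incidence graph on components and intersection points, whose cycles are precisely the non-trivial cycle sequences with distinct entries. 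Since your main argument does not rely on that aside, this is a caveat rather than a gap.
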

\begin{proof}
Since $C$ does not form a tree, there exist irreducible components $B_1, \ldots, B_k$ of $C$ with the condition
\begin{itemize}
\item[(3)] $\# \left( B_i \cap (\bigcup _{j \not = i} B_j) \right) \ge 2$ for each $1 \le i \le k$. 
\end{itemize}
We set $b_1 = B_1$ and take a point $b_2$ in $B_1 \cap (\bigcup _{j \not = 1} B_j)$. 
Inductively, we set $b_{2i+1}$ and $b_{2i+2}$ for $i \ge 1$ as follows: 
\begin{itemize}
\item We take an arbitrary $B_m$ among $\{ B_1, \ldots , B_k \} \setminus \{ b_{2i-1} \}$ 
such that $b_{2i} \in b_{2i-1} \cap B_m$. 
\item We take an arbitrary point $p$ in 
$\left( B_{m} \cap (\bigcup _{j \not = m} B_j) \right) \setminus \{ b_{2i} \}$. 
\item We set $b_{2i+1} = B_m$ and $b_{2i+2} = p$. 
\end{itemize}
We can repeat this process by the condition (3). 
Since $\{ B_1, \ldots , B_k \}$ is a finite set, 
there exist $m_1, m_2$ with $m_2 \ge m_1 + 4$ such that 
\begin{itemize}
\item $b_{m_1} = b_{m_2}$ but $b_{m_1}, \ldots , b_{m_2 -1}$ are distinct. 
\end{itemize}
Then the sequence $(b_{m_1}, \ldots , b_{m_2 -1})$ 
satisfies the condition (1), (2) in Definition \ref{defi:curve_seq}. 
Since $b_{m_1}, \ldots , b_{m_2 -1}$ are distinct, the sequence has the non-trivial (a,b)-reduction. 
\end{proof}

\begin{prop}\label{prop:tree}
Let $k$ be an algebraic closed field of characteristic $p > 5$. 
Let $(X, \Delta)$ be a three-dimensional projective log pair over $k$ with $-(K_X + \Delta)$ nef and big. 
Suppose that $\mathrm{Nklt}(X, \Delta)$ is pure dimension one. 
Then $\mathrm{Nklt}(X, \Delta)$ forms a tree. 
\end{prop}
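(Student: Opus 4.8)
The plan is to argue by contradiction: suppose $C := \mathrm{Nklt}(X,\Delta)$ is pure of dimension one but does not form a tree. By Lemma \ref{lem:cycle_seq}, after noting that $C$ is connected (which follows from the Koll\'ar--Shokurov connectedness lemma, since $-(K_X+\Delta)$ is nef and big and $C$ is the non-klt locus), we obtain a non-trivial cycle sequence $(a_1,\ldots,a_n)$ with distinct entries. The goal is to realize this cycle sequence as the image of an edge loop $P$ in the dual complex $G := \mathcal{D}(\Delta_Y^{\ge 1})$ of a suitable dlt blow-up $f:(Y,\Delta_Y)\to(X,\Delta)$, and then to use the contractibility of $G$ (Theorem \ref{thm:nefbig}) together with Proposition \ref{prop:normality} to show that such a loop must be homotopically trivial in a way that forces the (a,b)-reduction of the cycle sequence to be trivial --- contradicting non-triviality.

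First I would fix a dlt blow-up $f:(Y,\Delta_Y)\to(X,\Delta)$ with $\mathrm{Supp}\,\Delta_Y^{\ge 1}=f^{-1}(C)$ and with $G$ contractible (Theorem \ref{thm:nefbig}); recall $G$ is a regular $\Delta$-complex by Proposition \ref{prop:regular}. The curves $C_i$ (the irreducible components of $C$) correspond to the images under $f$ of the vertices (and, when the component is dominated by a one-dimensional stratum, edges) of $G$; more precisely, for each component $C_i$ there is at least one vertex $S$ of $G$ with $f(S)=C_i$, and each node $P$ of $C$ lying on components $C_i, C_j$ is the $f$-image of a one-dimensional stratum, i.e. an edge of $G$, joining a vertex over $C_i$ to a vertex over $C_j$ --- this uses Proposition \ref{prop:normality}, which guarantees that the normalization of each $C_i$ is a universal homeomorphism, so that the local picture of $C$ near a node is faithfully detected on $G$ (no spurious identifications of branches). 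Using this dictionary, I would lift the cycle sequence $(a_1,\ldots,a_n)$ step by step to an edge loop $P: S_1 \xrightarrow{C_1'} S_2 \xrightarrow{C_2'} \cdots \xrightarrow{C_n'} S_1$ in $G$ whose image (in the sense of Example \ref{ex:edge_loop}) is, up to (a)-reduction, the given cycle sequence; the choices of vertices over each $C_i$ can be made coherently because the subcomplex $G'$ of strata dominating a fixed $C_i$ is connected (Proposition \ref{prop:G1}(1)).

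Then I would invoke $\pi_1(|G|)=1$ (contractibility) to conclude that $P$ is null-homotopic, hence by the standard edge-path/simplicial-complex calculus it can be reduced to the constant loop by a finite sequence of elementary moves: deleting a backtrack $S\xrightarrow{E}S'\xrightarrow{E}S$ (an (a)-reduction type move on the image) and replacing two sides $S\xrightarrow{E_1}S'\xrightarrow{E_2}S''$ of a $2$-simplex by the third side $S\xrightarrow{E_3}S''$. The key point is to track what these moves do to the image cycle sequence: a backtrack deletion corresponds to operation (a), and a $2$-simplex move corresponds to operation (b) (the relation $a_i=a_{i+2}$ arises exactly when the two removed edges share a face $F$, whose third vertex gives the surviving term). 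Since $P$ reduces to the trivial loop, the image cycle sequence must have trivial (a,b)-reduction --- contradicting that $(a_1,\ldots,a_n)$ was chosen non-trivial. The main obstacle I expect is precisely the bookkeeping in this last step: one must verify that every elementary homotopy of edge loops in a regular $\Delta$-complex projects under $f$ to a composition of operations (a) and (b) on cycle sequences (and not to something that destroys the cycle-sequence conditions (2-1), (2-2)), and that conversely the lifted loop's image really does (a)-reduce to the original cycle sequence; this is where Proposition \ref{prop:normality} and the connectedness statements of Proposition \ref{prop:G1} do the essential work of ruling out degenerate configurations.
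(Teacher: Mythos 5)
Your proposal follows essentially the same route as the paper: argue by contradiction via Lemma \ref{lem:cycle_seq}, lift the non-trivial cycle sequence to an edge loop in the contractible dual complex $G$ of a suitable dlt blow-up (using the connectedness statements of Proposition \ref{prop:G1} to build the lift), and then check that the two elementary edge-loop moves (backtrack deletion and replacing two sides of a $2$-simplex by the third) only change the image by operations (a) and (b), so that null-homotopy forces the (a,b)-reduction to be trivial --- exactly the bookkeeping carried out in Steps 1--4 of the paper's proof. The only cosmetic differences are that the paper does not need Proposition \ref{prop:normality} here, and that collapsing the lifted loop's image back to $(a_1,\ldots,a_n)$ requires operation (b) as well as (a).
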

\begin{proof}
Note that $C := \mathrm{Nklt}(X, \Delta)$ is connected by \cite[Theorem 1.2]{NT}. 
Suppose that $C = \mathrm{Nklt}(X, \Delta)$ does not form a tree. 
Let $C = C_1 \cup \cdots \cup C_{\ell}$ be the irreducible decomposition. 

\vspace{2mm}

\noindent
\textbf{STEP 1. }\ 
Let $f: (Y, \Delta_Y) \to (X, \Delta)$ be a dlt blow-up such that $\mathrm{Supp}\, \Delta _Y ^{\ge 1} = f^{-1}(C)$. 
Let $G := \mathcal{D} (\Delta _Y ^{\ge 1})$ be the dual complex. 
We may assume that $G$ is contractible by Theorem \ref{thm:nefbig}. 

The assertion in this step is that there exists an edge loop (see Example \ref{ex:edge_loop} for the notation)
\[
S_1 \overset{C_1}{\longrightarrow} S_2 \overset{C_2}{\longrightarrow} \cdots 
\overset{C_{n-1}}{\longrightarrow} S_n  \overset{C_n}{\longrightarrow} S_1
\]
in $G$ such that 
\begin{itemize}
\item its image $(f(S_1), f(C_1), \ldots , f(S_n), f(C_n))$ is a non-trivial cycle sequence. 
\end{itemize}

By Lemma \ref{lem:cycle_seq}, there exists a non-trivial cycle sequence $(a_1, \ldots , a_n)$ such that $a_i$'s are distinct.
Since $a_i$'s are distinct, the (a)-reduction of this sequence is itself.  
Therefore, this sequence itself satisfies the conditions (2-1) and (2-2) in Definition \ref{defi:curve_seq}. 
We may assume that $\dim a_1 = 0$. 

Suppose that $i$ is odd. 

\vspace{2mm}

\begin{center}
\begin{footnotesize}
{\unitlength 0.1in%
\begin{picture}(12.9000,7.5000)(7.3000,-16.0000)%
%
\special{pn 4}%
\special{sh 1}%
\special{ar 1600 1400 10 10 0 6.2831853}%
\special{sh 1}%
\special{ar 1600 1400 10 10 0 6.2831853}%
%
\special{pn 8}%
\special{pa 1200 1000}%
\special{pa 1800 1600}%
\special{fp}%
\special{pa 1400 1600}%
\special{pa 2000 1000}%
\special{fp}%
\put(11.9000,-9.8000){\makebox(0,0)[rb]{$a_{i-1}$}}%
\put(20.2000,-9.8000){\makebox(0,0)[lb]{$a_{i+1}$}}%
\put(16.0000,-15.5000){\makebox(0,0){$a_i$}}%
\end{picture}}%
\end{footnotesize}
\end{center}

\vspace{2mm}

\noindent
Then, we can take an edge path $P_i$: 
\[
P_i: S^{(i)}_1 \overset{C^{(i)}_1}{\longrightarrow} S^{(i)}_2 \overset{C^{(i)}_2}{\longrightarrow} \cdots
 \overset{C^{(i)}_{m_i -1}}{\longrightarrow} S^{(i)}_{m_i}
\]
in $G$ with the following conditions: 
\begin{itemize}
\item[(4)] $f(S^{(i)}_1) = a_{i-1}$ and $f(S^{(i)} _{m_i}) = a_{i+1}$. 
\item[(5)] $a_i \in f(S^{(i)}_j)$ but $f(S^{(i)}_j) \not = a_{i-1}, a_{i+1}$ for $2 \le j \le m_i -1$. 
\end{itemize}
Such $P_i$ can be taken by the connectedness of 
the subcomplex of $G$ which consists of the simplices corresponding to the stratum $S$ of 
$\Delta _Y ^{\ge 1}$ which satisfies $a_i \in f(S)$.

Suppose that $i$ is even. 

\vspace{2mm}

\begin{center}
\begin{footnotesize}
{\unitlength 0.1in%
\begin{picture}(10.8000,1.7500)(11.7000,-10.3500)%
%
\special{pn 4}%
\special{sh 1}%
\special{ar 1400 1000 10 10 0 6.2831853}%
\special{sh 1}%
\special{ar 2000 1000 10 10 0 6.2831853}%
\special{sh 1}%
\special{ar 2000 1000 10 10 0 6.2831853}%
%
\special{pn 8}%
\special{pa 1180 1000}%
\special{pa 2220 1000}%
\special{fp}%
\put(22.5000,-9.9000){\makebox(0,0)[lb]{$a_i$}}%
\put(14.0000,-11.0000){\makebox(0,0){$a_{i-1}$}}%
\put(20.0000,-11.0000){\makebox(0,0){$a_{i+1}$}}%
\end{picture}}%
\end{footnotesize}
\end{center}

\vspace{3mm}

\noindent
Then, we can take an edge path $P_i$: 
\[
P_i: S^{(i)}_1 \overset{C^{(i)}_1}{\longrightarrow} S^{(i)}_2 \overset{C^{(i)}_2}{\longrightarrow} \cdots
 \overset{C^{(i)}_{m_i -1}}{\longrightarrow} S^{(i)}_{m_i}
\]
in $G$ with the following conditions:
\begin{itemize}
\item[(6)] $S_1 ^{(i)} = S_{m_{i-1}} ^{(i-1)}$ and 
$S_{m_i} ^{(i)} = S_1 ^{(i+1)}$ (here, $S_{m_{i-1}} ^{(i-1)}$ and $S_1 ^{(i+1)}$ were already taken). 
\item[(7)] $f(S^{(i)}_j) = a_i$ for $1 \le j \le m_i$ and 
$f(C^{(i)}_j) = a_i$ for $1 \le j \le m_i - 1$. 
\end{itemize}
Such $P_i$ can be taken by the connectedness of 
the subcomplex of $G$ which consists of the simplices corresponding to the stratum $S$ of 
$\Delta _Y ^{\ge 1}$ which satisfies $a_i = f(S)$ (Proposition \ref{prop:G1} (1)). 

Connecting the  edge paths $P_1, P_2, \ldots , P_n$, we get an edge loop $P$, 
which is possibly not simple. 

We prove that the image of $P$ is a non-trivial cycle sequence. 
Note that the image of any edge loop in $G$ is a cycle sequence (Example \ref{ex:edge_loop}). 
Therefore, it is sufficient to show that the (a,b)-reduction of the image of $P$ is non-trivial. 

For an even number $i$, it follows that 
\[
f(S^{(i)} _1) = f(C^{(i)} _1) = \cdots = 
f(C^{(i)} _{m_i -1}) =  f(S^{(i)} _{m_i}) = a_{i}
\]
by the condition (7). 
Hence, applying the operation (a) to the image of $P$ 
\[
\left( \ldots,  f(C_{m_{i-1} - 1}^{(i-1)}) , f(S_{m_{i-1}}^{(i-1)}) = f(S_1 ^{(i)}), 
\ldots , f(S_{m_i} ^{(i)}) = f(S_{1} ^{(i+1)}), f(C_{1} ^{(i+1)} ) , \ldots  \right), 
\]
it can be reduced to 
\[
\left( \ldots,  f(C_{m_{i-1} - 1}^{(i-1)}) , f(S_{m_{i-1}}^{(i-1)}) = a_i = f(S_{1} ^{(i+1)}), 
f(C_{1} ^{(i+1)} ) , \ldots  \right). 
\]

For an odd number $i$, we set
\[
b_1 ^{(i)} = f(S^{(i)} _1), \quad b_2^{(i)} = f(C^{(i)} _1), \quad \ldots , \quad 
b_{2(m_i-1)}^{(i)} = f(C^{(i)} _ {m_i -1}), \quad b^{(i)} _{2m_i -1} = f(S^{(i)} _{m_i}). 
\]
Then, they satisfy the following conditions: 
\begin{itemize}
\item $b_1 ^{(i)} = a_{i-1}$ and $b^{(i)} _{2m_i -1} = a_{i+1}$ by the condition (4). 
\item $a_i \in b_j ^{(i)}$ but $b_j ^{(i)} \not = a_{i-1}, a_{i+1}$  for $2 \le j \le 2m_i -2$ by the condition (5). 
\item If $b_j ^{(i)} \not = b_{j+1} ^{(i)}$, then either $b_j ^{(i)} = a_i$ or $b_{j+1} ^{(i)} = a_i$ 
(This is because $a_i$ is a point and 
we have an inclusion either $a_i \in b_j ^{(i)} \subset b_{j+1} ^{(i)}$ or $a_i \in b_{j+1} ^{(i)} \subset b_{j} ^{(i)}$). 
\end{itemize}
Hence, applying the operation (a) to the image of $P$
\[
\left( \ldots , f(C^{(i-1)} _{m_{i-1} -1}), 
b_1^{(i)}, \ldots, b_{2m_i -1} ^{(i)}, f(C^{(i+1)} _1) \ldots  \right), 
\]
it can be reduced to
\[
\left(  \ldots , f(C^{(i-1)} _{m_{i-1} -1} ), 
a_{i-1}, a_i, c_1, a_i, \cdots, a_i, c_{n_i} ,a_i, a_{i+1}, f(C^{(i+1)} _1),  \ldots \right), 
\]
for some $c_1, \ldots, c_{n_i}$. 
Applying the operation (b), it can be reduced to 
\[
\left( \ldots , f(C^{(i-1)} _{m_{i-1} -1}), 
f(S_{m_i}^{(i-1)})= a_{i-1}, a_i, a_{i+1}= f(S_{1}^{(i+1)}), f(C^{(i+1)} _1), \ldots  \right). 
\]
Therefore, the (a,b)-reduction of the image of $P$ is $(a_1, a_2, \ldots , a_n)$, 
which is non-trivial since $a_i$'s are distinct. 
We have proved that the image of $P$ is a non-trivial cycle sequence.

\vspace{2mm}

\noindent
\textbf{STEP 2. }\ 
Let $Q$ be an edge loop 
\[
Q: S'_1 \overset{C'_1}{\longrightarrow} S'_2 \overset{C'_2}{\longrightarrow} \cdots 
\overset{C'_{n-1}}{\longrightarrow} S'_n  \overset{C'_n}{\longrightarrow} S'_1
\]
in $G$. 
Suppose that $C'_i \not = C' _{i+1}$ there exist $i$ and a $2$-simplex $F$ in $G$ such that $C' _i, C' _{i+1} < F$. 
Let $C' <  F$ be the edge which is different from $C' _i$ and $C' _{i+1}$.

\vspace{1mm}

\begin{center}
\begin{footnotesize}
{\unitlength 0.1in%
\begin{picture}(24.4000,7.5500)(13.1200,-14.0000)%
%
\special{pn 4}%
\special{sh 1}%
\special{ar 1792 1400 10 10 0 6.2831853}%
\special{sh 1}%
\special{ar 2352 1200 10 10 0 6.2831853}%
\special{sh 1}%
\special{ar 3192 1200 10 10 0 6.2831853}%
\special{sh 1}%
\special{ar 3752 1400 10 10 0 6.2831853}%
\special{sh 1}%
\special{ar 2772 800 10 10 0 6.2831853}%
\special{sh 1}%
\special{ar 2772 800 10 10 0 6.2831853}%
%
\special{pn 8}%
\special{pa 1792 1400}%
\special{pa 2352 1200}%
\special{fp}%
\special{pa 2352 1200}%
\special{pa 3192 1200}%
\special{fp}%
\special{pa 3192 1200}%
\special{pa 3752 1400}%
\special{fp}%
\special{pa 3192 1200}%
\special{pa 2772 800}%
\special{fp}%
\special{pa 2772 800}%
\special{pa 2352 1200}%
\special{fp}%
\put(27.7200,-10.8000){\makebox(0,0){$F$}}%
\put(20.4400,-13.1000){\makebox(0,0)[lt]{$C_{i-1}'$}}%
\put(35.1400,-13.1000){\makebox(0,0)[rt]{$C_{i+2}'$}}%
\put(17.9200,-13.9000){\makebox(0,0)[rb]{$S_{i-1}'$}}%
\put(37.5200,-13.9000){\makebox(0,0)[lb]{$S_{i+3}'$}}%
\put(23.2400,-11.9000){\makebox(0,0)[rb]{$S_i'$}}%
\put(32.2000,-11.9000){\makebox(0,0)[lb]{$S_{i+2}'$}}%
\put(27.7200,-12.8000){\makebox(0,0){$C'$}}%
\put(25.6200,-10.0000){\makebox(0,0)[rb]{$C_i'$}}%
\put(29.8200,-10.0000){\makebox(0,0)[lb]{$C_{i+1}'$}}%
\put(27.7200,-7.1000){\makebox(0,0){$S_{i+1}'$}}%
\end{picture}}%
\end{footnotesize}
\end{center}

\vspace{3mm}

\noindent
Then we have a new edge loop $Q'$: 
\[
Q': S'_1 \overset{C'_1}{\longrightarrow} S'_2 \overset{C'_2}{\longrightarrow} \cdots 
\overset{C'_{i-1}}{\longrightarrow} S'_i \overset{C'}{\longrightarrow} S'_{i+2} \overset{C'_{i+2}}{\longrightarrow}
\cdots 
\overset{C'_{n-1}}{\longrightarrow} S'_n  \overset{C'_n}{\longrightarrow} S'_1. 
\]

\noindent
We claim in this step that 
\begin{itemize}
\item the image of $Q$ and the image of $Q'$ have the same (a,b)-reduction. 
\end{itemize}

The image of $Q$ is 
\[
R: \left( f(S' _1), f(C' _1), \ldots , f(S' _i), f(C' _i), f(S' _{i+1}), f(C' _{i+1}), f(S' _{i+2}), \ldots , f(C'_n) \right), 
\]
and the image of $Q'$ is 
\[
R': \left( f(S' _1), f(C' _1), \ldots , f(S' _i), f(C'), f(S' _{i+2}), \ldots , f(C'_n) \right). 
\]
We have four cases. 
\begin{itemize}
\item[(i)] $f(S' _i) = f(S' _{i+1}) = f(S' _{i+2})$. 
\item[(ii)] $f(S'_i) = f(S' _{i+2}) \not = f(S'_{i+1})$. 
\item[(iii)] $f(S'_i) = f(S' _{i+1}) \not = f(S' _{i+2})$ or $f(S'_i) \not = f(S' _{i+1}) = f(S' _{i+2})$. 
\item[(iv)] $f(S' _i), f(S' _{i+1}), f(S' _{i+2})$ are distinct. 
\end{itemize}

Suppose (i). 
Applying the operation (b) twice to $R$, and once to $R'$, we get the same sequence
\[
\left( f(S' _1), f(C' _1), \ldots , f(C'_{i-1}),f(S'_i) = f(S'_{i+2}) ,f(C'_{i+2}), \ldots , f(C'_n) \right).
\]

Suppose (ii). Since $f(C'_i) \subset f(S' _i) \cap f(S' _{i+1})$ and $f(S' _i) \not = f(S' _{i+1})$, 
it follows that $\dim f(C'_i) = 0$. By the same reason, it follows that $\dim f(C'_{i+1}) = 0$. 
Since $f(F) \subset f(C' _i) \cap f(C' _{i+1})$, it follows that $f(C' _i) = f(F) = f(C' _{i+1})$. 
Applying the operation (b) twice to $R$, and once to $R'$, we get the same sequence
\[
\left( f(S' _1), f(C' _1), \ldots , f(C'_{i-1}),f(S'_i)=f(S'_{i+2}),f(C'_{i+2}), \ldots , f(C'_n) \right).
\]

Suppose (iii). 
We may assume that $f(S'_i) = f(S' _{i+1}) \not = f(S' _{i+2})$. 
By the same reason as in the case (ii), $f(C' _{i+1}) = f(F) = f(C')$. 
Applying the operation (b) once to $R$, we get the sequence $R'$
\[
\left( f(S' _1), f(C' _1), \ldots , f(C'_{i-1}),f(S'_i)=f(S'_{i+1}),f(C' _{i+1})=f(C'), 
f(S' _{i+2}), f(C'_{i+2}), \ldots , f(C'_n) \right).
\]

Suppose (iv). In this case, $f(C' _i) = f(C' _{i+1}) = f(C')$. 
Applying the operation (b) once to $R$, we get the sequence $R'$
\[
\left( f(S' _1), f(C' _1), \ldots , f(C'_{i-1}),f(S'_i), f(C' _i) = f(C' _{i+1}) = f(C'), f(S' _{i+2}), 
f(C'_{i+2}), \ldots , f(C'_n) \right).
\]

In any case, $R$ and $R'$ have the same (a,b)-reduction. 

\vspace{2mm}

\noindent
\textbf{STEP 3. }\ 
Let $Q$ be an edge loop 
\[
Q: S'_1 \overset{C'_1}{\longrightarrow} S'_2 \overset{C'_2}{\longrightarrow} \cdots 
\overset{C'_{i-1}}{\longrightarrow} S' _i \overset{C'_i}{\longrightarrow} S'_{i+1} \overset{C'_{i+1}}{\longrightarrow} S'_{i+2} 
\overset{C'_{i+2}}{\longrightarrow} \cdots
\overset{C'_{n-1}}{\longrightarrow} S'_n  \overset{C'_n}{\longrightarrow} S'_1
\]
in $G$. 
Suppose that there exist $i$ such that $C' _i = C' _{i+1}$. 
Then $S'_i = S'_{i+2}$ holds.

\vspace{2mm}

\begin{center}
\begin{footnotesize}
{\unitlength 0.1in%
\begin{picture}(23.8500,9.8500)(11.2000,-14.0000)%
%
\special{pn 4}%
\special{sh 1}%
\special{ar 2560 1200 10 10 0 6.2831853}%
\special{sh 1}%
\special{ar 1660 1400 10 10 0 6.2831853}%
\special{sh 1}%
\special{ar 3460 1400 10 10 0 6.2831853}%
\special{sh 1}%
\special{ar 2560 600 10 10 0 6.2831853}%
\special{sh 1}%
\special{ar 2560 600 10 10 0 6.2831853}%
%
\special{pn 8}%
\special{pa 2560 600}%
\special{pa 2560 1200}%
\special{fp}%
\special{pa 2560 1200}%
\special{pa 1660 1400}%
\special{fp}%
\special{pa 2560 1200}%
\special{pa 3460 1400}%
\special{fp}%
\put(25.6000,-4.8000){\makebox(0,0){$S_{i+1}'$}}%
\put(16.0000,-13.7000){\makebox(0,0)[rb]{$S_{i-1}'$}}%
\put(35.0500,-13.7000){\makebox(0,0)[lb]{$S_{i+3}'$}}%
\put(20.2000,-13.4000){\makebox(0,0)[lt]{$C_{i-1}'$}}%
\put(31.0000,-13.4000){\makebox(0,0)[rt]{$C_{i+2}'$}}%
\put(26.5000,-7.5000){\makebox(0,0)[lt]{$C_i' = C_{i+1}'$}}%
\put(26.2000,-11.7000){\makebox(0,0)[lb]{$S_i ' = S_{i+2}'$}}%
\end{picture}}%
\end{footnotesize}
\end{center}

\vspace{4mm}

\noindent
Then we have a new edge loop $Q'$: 
\[
Q': S'_1 \overset{C'_1}{\longrightarrow} S'_2 \overset{C'_2}{\longrightarrow} \cdots 
\overset{C'_{i-1}}{\longrightarrow} S'_i \overset{C'_{i+2}}{\longrightarrow} S'_{i+3} \overset{C'_{i+3}}{\longrightarrow}
\cdots 
\overset{C'_{n-1}}{\longrightarrow} S'_n  \overset{C'_n}{\longrightarrow} S'_1. 
\]

We claim in this step that 
\begin{itemize}
\item the image of $Q$ and the image of $Q'$ have the same (a,b)-reduction. 
\end{itemize}

The image of $Q$ is 
\[
R: \left( f(S' _1), f(C' _1), \ldots , f(S' _i), f(C' _i), f(S' _{i+1}), f(C' _{i+1}), f(S' _{i+2}),f(C' _{i+2}), \ldots , f(C'_n) \right), 
\]
and the image of $Q'$ is 
\[
R': \left( f(S' _1), f(C' _1), \ldots , f(S' _i), f(C'_{i+2}), \ldots , f(C'_n) \right). 
\]

Since $C' _i = C' _{i+1}$ and $S'_i = S'_{i+2}$, 
applying the operation (b) twice to $R$, we get $R'$. 
Hence $R$ and $R'$ have the same (a,b)-reduction. 

\vspace{2mm}

\noindent
\textbf{STEP 4. }\ 
By STEP 1, 
there exists an edge loop $Q$
\[
Q: S_1 \overset{C_1}{\longrightarrow} S_2 \overset{C_2}{\longrightarrow} \cdots 
\overset{C_{n-1}}{\longrightarrow} S_n  \overset{C_n}{\longrightarrow} S_1
\]
in $G$ such that 
\begin{itemize}
\item its image $(f(E_1), f(C_1), \ldots , f(E_n), f(C_n))$ is a non-trivial cycle sequence. 
\end{itemize}
Since $G$ is simply connected, applying the operation in STEP 2 and STEP 3 (and the reversing operation) repeatedly, 
we get a trivial path
\[
Q': S'
\]
for some vertex $S'$ \cite[Theorem 3.4.1]{Geo08}. 
The (a,b)-reduction of the image of $Q$ is not trivial but that of $Q'$ is trivial. 
This contradicts STEP 2 and STEP 3. 
\end{proof}

\subsection{Vanishing theorem of Witt vector cohomology of Ambro-Fujino type}

\begin{thm}\label{thm:WAFV}
Let $k$ be a perfect field of characteristic $p > 5$. 
Let $(X, \Delta)$ be a three-dimensional projective $\mathbb{Q}$-factorial log canonical pair over $k$ with $-(K_X + \Delta)$ ample. 
Then $H^i(X,W \mathcal{O}_{X, \mathbb{Q}}) = 0$ holds for $i > 0$. 
\end{thm}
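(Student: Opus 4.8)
The plan is to push the vanishing off $X$ onto the non-klt locus $N:=\mathrm{Nklt}(X,\Delta)$ by means of the Nadel-type theorem, and then to analyze $N$ according to its dimension, the essential case being $\dim N=1$, in which the topological results of Section~3 enter. Concretely, I would first invoke Theorem~\ref{thm:WNV}: since $k$ is perfect and $(X,\Delta)$ satisfies condition~(ii) in $(\star)$, we have $H^i(X,WI_{N,\mathbb{Q}})=0$ for $i>0$. Feeding the short exact sequence $0\to WI_{N,\mathbb{Q}}\to W\mathcal O_{X,\mathbb{Q}}\to W\mathcal O_{N,\mathbb{Q}}\to 0$ into its long exact cohomology sequence then gives isomorphisms $H^i(X,W\mathcal O_{X,\mathbb{Q}})\cong H^i(N,W\mathcal O_{N,\mathbb{Q}})$ for every $i\ge 1$, so it suffices to prove $H^i(N,W\mathcal O_{N,\mathbb{Q}})=0$ for $i>0$. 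As $-(K_X+\Delta)$ is ample, $N$ is connected by the Koll{\'a}r--Shokurov connectedness theorem (\cite[Theorem~1.2]{NT}). Finally, $H^\ast(N,W\mathcal O_{N,\mathbb{Q}})$ is compatible with the faithfully flat base change $W(k)_{\mathbb{Q}}\to W(\bar k)_{\mathbb{Q}}$, so I may assume $k$ is algebraically closed; then $N$, being reduced and connected, is of pure dimension $\dim N$ as soon as $\dim N\le 1$.

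If $\dim N\le 0$, then $N$ is a single reduced point and there is nothing to prove. If $\dim N=2$, I would argue by induction on $\dim N$: the two-dimensional components of $N$ are the prime components of $\lfloor\Delta\rfloor$, and after passing to a $\mathbb{Q}$-factorial dlt blow-up $g\colon(Y,\Delta_Y)\to(X,\Delta)$ with $g^{-1}(N)=\mathrm{Nklt}(Y,\Delta_Y)=:D$ and $\mathcal D(\Delta_Y^{\ge 1})$ contractible (Theorem~\ref{thm:nefbig}, Remark~\ref{rmk:dltbup}(1)), the fibres of $g$ over log canonical centers are rationally chain connected, so that $Rg_\ast W\mathcal O_{D,\mathbb{Q}}\simeq W\mathcal O_{N,\mathbb{Q}}$ (cf.\ \cite{GNT}). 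One then computes $H^\ast(D,W\mathcal O_{D,\mathbb{Q}})$ by a Mayer--Vietoris spectral sequence over the strata of the dlt divisor $D$ (normal, of the expected codimension by Proposition~\ref{prop:dltstrata}): the one-dimensional strata are $\mathbb{P}^1$'s and the two-dimensional ones are weak log del Pezzo surfaces to which the theorem applies by induction, so the rows of positive cohomological degree vanish (with Esnault's theorem \cite{Esn03} for the smooth Fano strata), while the degree-zero row computes the reduced rational cohomology of $\mathcal D(\Delta_Y^{\ge 1})$, which vanishes by contractibility.

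Now suppose $\dim N=1$; then $N$ is of pure dimension one, so Lemma~\ref{lem:rationality}, Proposition~\ref{prop:normality}, and Proposition~\ref{prop:tree} all apply. By Lemma~\ref{lem:rationality} each irreducible component $C$ of $N$ is a rational curve, and by Proposition~\ref{prop:normality} its normalization $\mathbb{P}^1\cong\overline C\xrightarrow{\ \pi\ }C$ is a universal homeomorphism; since $W\mathcal O_{-,\mathbb{Q}}$-cohomology is invariant under finite universal homeomorphisms (cf.\ \cite{GNT,CR12}), this gives $H^i(C,W\mathcal O_{C,\mathbb{Q}})\cong H^i(\mathbb{P}^1,W\mathcal O_{\mathbb{P}^1,\mathbb{Q}})$, which vanishes for $i>0$ by Esnault's theorem \cite{Esn03} ($\mathbb{P}^1$ being smooth Fano) and equals $W(k)_{\mathbb{Q}}$ for $i=0$. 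By Proposition~\ref{prop:tree}, $N$ forms a tree; writing $N=C\cup N'$ with $C$ an irreducible component and $N'$ a connected subtree meeting $C$ in a single point $p$, and applying $W(-)_{\mathbb{Q}}$ to $0\to\mathcal O_N\to\mathcal O_C\oplus\mathcal O_{N'}\to\mathcal O_{C\cap N'}\to 0$ (here $W\mathcal O_{C\cap N',\mathbb{Q}}\cong W(k)_{\mathbb{Q}}$ is concentrated at $p$, as $C\cap N'$ is Artinian and $W(-)_{\mathbb{Q}}$ kills nilpotents), one obtains $0\to W\mathcal O_{N,\mathbb{Q}}\to W\mathcal O_{C,\mathbb{Q}}\oplus W\mathcal O_{N',\mathbb{Q}}\to W\mathcal O_{C\cap N',\mathbb{Q}}\to 0$. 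Induction on the number of components of $N$ — the map $H^0(C,W\mathcal O_{C,\mathbb{Q}})\oplus H^0(N',W\mathcal O_{N',\mathbb{Q}})\to H^0(C\cap N',W\mathcal O_{C\cap N',\mathbb{Q}})$ being the surjective difference of restrictions — then yields $H^i(N,W\mathcal O_{N,\mathbb{Q}})=0$ for $i>0$, finishing the proof.

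The expected main obstacle is the one-dimensional case, and within it the two structural inputs Proposition~\ref{prop:normality} and Proposition~\ref{prop:tree}: it is precisely the absence of cycles in $N$ and of a non-trivial identification of branches at its singular points that must be extracted from the contractibility of the dual complex of a dlt blow-up (Theorem~\ref{thm:nefbig}); once these are in hand, the remaining ingredients (Nadel vanishing, universal-homeomorphism invariance and nilpotent-insensitivity of $W\mathcal O_{-,\mathbb{Q}}$, Mayer--Vietoris) are formal. The two-dimensional reduction is similar in spirit but additionally requires the lower-dimensional analogues of Theorem~\ref{thm:WNV} and of Theorem~\ref{thm:nefbig}, and care in controlling $Rg_\ast W\mathcal O_{-,\mathbb{Q}}$.
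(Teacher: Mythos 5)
Your overall architecture --- reduction to the non-klt locus $N$ via Theorem \ref{thm:WNV} and the long exact sequence, base change to $\bar k$, and the treatment of the case $\dim N=1$ (rationality of components by Lemma \ref{lem:rationality}, universal homeomorphism of the normalizations by Proposition \ref{prop:normality}, the tree structure by Proposition \ref{prop:tree}, then the Mayer--Vietoris induction over the tree) --- coincides with the paper's proof. The place where you diverge, and where there is a genuine gap, is the case $\dim N=2$.

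The paper eliminates that case in its first sentence by a perturbation: since the conclusion $H^i(X,W\mathcal{O}_{X,\mathbb{Q}})=0$ does not mention $\Delta$, one is free to replace $\Delta$ by any auxiliary boundary, and $\Delta-\epsilon\,\Delta^{=1}$ with $0<\epsilon\ll 1$ does the job --- the pair stays log canonical, $-(K_X+\Delta-\epsilon\,\Delta^{=1})=-(K_X+\Delta)+\epsilon\,\Delta^{=1}$ stays ample because the ample cone is open, and all divisorial log canonical centers disappear, so $\dim\mathrm{Nklt}\le 1$ from the outset. Your substitute for this one line --- a dlt blow-up $g$, the identification $Rg_*W\mathcal{O}_{D,\mathbb{Q}}\simeq W\mathcal{O}_{N,\mathbb{Q}}$ via rational chain connectedness of fibers, a Mayer--Vietoris spectral sequence over the dlt strata, and an induction appealing to a two-dimensional analogue of Theorem \ref{thm:WAFV} --- is a sketch whose individual steps are each nontrivial assertions in positive characteristic and are neither proved in the paper nor direct consequences of what you cite: the surface case of the theorem is not among the paper's results, the claim that the one-dimensional strata are rational is unjustified, and both the descent of $W\mathcal{O}_{\mathbb{Q}}$-cohomology along $g$ and the existence/degeneration of the strata spectral sequence would require separate arguments. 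As written, this branch does not close; it should be replaced by the perturbation above (after which your remaining argument is essentially the paper's).
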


\begin{proof}
By replacing $\Delta$ smaller, we may assume that $\dim \mathrm{Nklt}(X, \Delta) \le 1$. 

By the exact sequence
\[
0 \to WI_{\mathrm{Nklt}(X, \Delta), \mathbb{Q}} \to W \mathcal{O}_{X, \mathbb{Q}} \to 
W \mathcal{O}_{\mathrm{Nklt}(X, \Delta), \mathbb{Q}} \to 0, 
\]
and the Nadel type vanishing $H^i(X, WI_{\mathrm{Nklt}(X, \Delta), \mathbb{Q}}) = 0$ for $i >0$ 
(Theorem \ref{thm:WNV}), 
it is sufficient to show that 
\[
H^1 (\mathrm{Nklt}(X, \Delta), W \mathcal{O}_{\mathrm{Nklt}(X, \Delta), \mathbb{Q}}) = 0. 
\]
Here, we may assume that $k$ is algebraically closed by \cite[Lemma 2.15]{NT}.  
Since $\dim \mathrm{Nklt}(X, \Delta) \le 1$ and $\mathrm{Nklt}(X, \Delta)$ is connected (\cite[Theorem 1.2]{NT}), 
we may assume that $\mathrm{Nklt}(X, \Delta)$ is a union of curves. 

Let $C := \mathrm{Nklt}(X, \Delta) = C_1 \cup C_2 \cup \cdots \cup C_l$ be the irreducible decomposition. 
By Lemma \ref{lem:rationality}, Proposition \ref{prop:normality}, and Proposition \ref{prop:tree}, 
the curve $C$ satisfies the following conditions. 
\begin{enumerate}
\item[(1)] Each $C_i$ is a rational curve. 
\item[(2)] Each normalization of $C_i$ is a universal homeomorphism. 
\item[(3)] $C = C_1 \cup \ldots \cup C_l$ forms a tree (see Definition \ref{def:tree}). 
\end{enumerate}
Then $H^1 (C_i, W \mathcal{O}_{C_i, \mathbb{Q}}) = 0$ follows from (1) and (2) (cf.\ \cite[Lemma 2.21, 2.22]{GNT}). 
Hence the desired vanishing $H^1 (C, W \mathcal{O}_{C, \mathbb{Q}}) = 0$ follows from (3). 
\end{proof}

\subsection{Rational point formula}
As an application of Theorem \ref{thm:WAFV}, we obtain the following rational point formula. 

\begin{thm}\label{thm:RPF}
Let $k$ be a finite field of characteristic $p > 5$. 
Let $(X, \Delta)$ be a geometrically connected three-dimensional projective $\mathbb{Q}$-factorial log canonical pair over $k$ with $-(K_X + \Delta)$ ample. 
Then the number of the $k$-rational points on the non-klt locus on $(X, \Delta)$ satisfies 
\[
\# \mathrm{Nklt}(X, \Delta) (k) \equiv 1 \mod {|k|}.
\]
In particular, there exists a $k$-rational point on $\mathrm{Nklt}(X, \Delta)$. 
\end{thm}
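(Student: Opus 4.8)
The plan is to read off the congruence from Theorem~\ref{thm:WAFV} via the congruence formula for Witt vector cohomology, so that essentially all of the work is already done. Write $C := \mathrm{Nklt}(X,\Delta)$, viewed as a reduced closed subscheme of $X$, and assume $C \neq \emptyset$ (i.e.\ that $(X,\Delta)$ is not klt); set $q := |k|$ and $K := \mathrm{Frac}(W(k)) = W(k)\otimes_{\mathbb{Z}}\mathbb{Q}$. The input is the Berthelot--Bloch--Esnault congruence formula, in the form used in \cite{Esn03, GNT, NT}: for a proper $k$-scheme $Z$, the groups $H^i(Z, W\mathcal{O}_{Z,\mathbb{Q}})$ are finite-dimensional $K$-vector spaces equipped with a natural Frobenius $F$, and
\[
\# Z(k) \equiv \sum_{i\ge 0}(-1)^i \mathrm{Tr}\bigl(F \mid H^i(Z, W\mathcal{O}_{Z,\mathbb{Q}})\bigr) \bmod q .
\]
Moreover, for a geometrically connected proper $k$-scheme one has $H^0(Z, W\mathcal{O}_{Z,\mathbb{Q}}) = K$ with $F$ acting as the identity (as one sees by comparing with $Z = \mathrm{Spec}\,k$). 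Applying the formula to $Z = C$, it therefore suffices to prove that $H^i(C, W\mathcal{O}_{C,\mathbb{Q}}) = 0$ for $i \ge 1$ and that $F$ is the identity on the one-dimensional space $H^0(C, W\mathcal{O}_{C,\mathbb{Q}})$.

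For the cohomology of $C$, I would use the short exact sequence
\[
0 \to W I_{C,\mathbb{Q}} \to W\mathcal{O}_{X,\mathbb{Q}} \to W\mathcal{O}_{C,\mathbb{Q}} \to 0
\]
already used in the proof of Theorem~\ref{thm:WAFV}. Since the hypotheses of Theorem~\ref{thm:WAFV} coincide with the ones in force here (a finite field is perfect), we have $H^i(X, W\mathcal{O}_{X,\mathbb{Q}}) = 0$ for $i > 0$; and by the Nadel-type vanishing (Theorem~\ref{thm:WNV}) applied to $(X,\Delta)$ we have $H^i(X, W I_{C,\mathbb{Q}}) = 0$ for $i > 0$ as well. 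Feeding both vanishings into the long exact cohomology sequence, which is Frobenius-equivariant, gives $H^i(C, W\mathcal{O}_{C,\mathbb{Q}}) = 0$ for all $i \ge 1$, together with a Frobenius-equivariant exact sequence
\[
0 \to H^0(X, W I_{C,\mathbb{Q}}) \to H^0(X, W\mathcal{O}_{X,\mathbb{Q}}) \to H^0(C, W\mathcal{O}_{C,\mathbb{Q}}) \to 0
\]
in degree zero. Note that this argument is insensitive to the dimension of $C$, so that the auxiliary analysis of $\mathrm{Nklt}$ as a tree of rational curves is not needed at this stage.

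To finish, it remains to handle degree zero. As $X$ is a geometrically connected projective variety over the perfect field $k$ one has $H^0(X,\mathcal{O}_X) = k$, hence $H^0(X, W\mathcal{O}_{X,\mathbb{Q}}) = K$ with $F$ acting as the identity; and since $C$ is non-empty a nonzero constant Witt vector restricts to a nonzero section of $W\mathcal{O}_{C,\mathbb{Q}}$, so that $H^0(X, W I_{C,\mathbb{Q}}) = 0$. The displayed degree-zero sequence then shows that $H^0(X, W\mathcal{O}_{X,\mathbb{Q}}) \to H^0(C, W\mathcal{O}_{C,\mathbb{Q}})$ is a Frobenius-equivariant isomorphism; hence $H^0(C, W\mathcal{O}_{C,\mathbb{Q}}) \cong K$ and $F$ acts on it as the identity. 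Substituting into the congruence formula yields $\# C(k) \equiv \mathrm{Tr}\bigl(F \mid H^0(C, W\mathcal{O}_{C,\mathbb{Q}})\bigr) = 1 \bmod q$, which is the claimed formula. The last sentence of the theorem is then immediate: since $q = |k| \ge p > 5$, the class of $1$ is nonzero modulo $q$, so $\# \mathrm{Nklt}(X,\Delta)(k) \neq 0$.

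I do not expect a genuine obstacle. The real content — the vanishing $H^{>0}(X, W\mathcal{O}_{X,\mathbb{Q}}) = 0$, which rests on the contractibility of the dual complex (Theorem~\ref{thm:nefbig}) and on the topology of the non-klt locus (Propositions~\ref{prop:normality} and \ref{prop:tree}) — is precisely Theorem~\ref{thm:WAFV}, which enters as a black box. The only things to be careful about are (i) quoting the congruence formula in a normalisation for which the unit class contributes $+1$ to the alternating sum, and (ii) the two elementary facts that $C \neq \emptyset$ forces $H^0$ of its ideal sheaf in $W\mathcal{O}_{X,\mathbb{Q}}$ to vanish and that geometric connectedness of $X$ gives $H^0(X,\mathcal{O}_X) = k$; both are routine.
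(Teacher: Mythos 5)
Your proposal is correct and follows essentially the same route as the paper: the same short exact sequence $0 \to WI_{Z,\mathbb{Q}} \to W\mathcal{O}_{X,\mathbb{Q}} \to W\mathcal{O}_{Z,\mathbb{Q}} \to 0$ combined with Theorems \ref{thm:WNV} and \ref{thm:WAFV} gives $H^i(Z, W\mathcal{O}_{Z,\mathbb{Q}})=0$ for $i>0$, after which the paper simply cites \cite[Proposition 6.9 (i)]{BBE07} for the congruence. The only difference is that you unpack that citation (the trace formula plus the Frobenius-equivariant degree-zero analysis identifying $H^0(Z,W\mathcal{O}_{Z,\mathbb{Q}})\cong K$ with trivial $F$), which is a correct and slightly more self-contained presentation of the same argument.
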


\begin{proof}
Let $Z = \mathrm{Nklt}(X, \Delta)$ and let $I_Z$ be the corresponding coherent ideal sheaf. 
By Theorem \ref{thm:WNV} and Theorem \ref{thm:WAFV}, 
\[
H^i(X, WI_{Z, \mathbb{Q}}) = 0,\ \text{and}\ H^i(X,W \mathcal{O}_{X, \mathbb{Q}}) = 0
\]
hold for $i > 0$. 
By the exact sequence 
\[
0 \to WI_{Z} \to W \mathcal{O}_{X, \mathbb{Q}} \to 
W \mathcal{O}_{Z, \mathbb{Q}} \to 0, 
\]
$H^i(Z, W \mathcal{O}_{Z, \mathbb{Q}}) = 0$ holds for $i >0$. 
By \cite[Proposition 6.9 (i)]{BBE07}, it follows that $\# Z (k) \equiv 1 \pmod {|k|}$.
\end{proof}


\begin{bibdiv}
\begin{biblist*}

\bib{AKMW02}{article}{
   author={Abramovich, Dan},
   author={Karu, Kalle},
   author={Matsuki, Kenji},
   author={W\l odarczyk, Jaros\l aw},
   title={Torification and factorization of birational maps},
   journal={J. Amer. Math. Soc.},
   volume={15},
   date={2002},
   number={3},
   pages={531--572},
}

\bib{Ax64}{article}{
   author={Ax, James},
   title={Zeroes of polynomials over finite fields},
   journal={Amer. J. Math.},
   volume={86},
   date={1964},
   pages={255--261},
}

\bib{BBE07}{article}{
   author={Berthelot, Pierre},
   author={Bloch, Spencer},
   author={Esnault, H{\'e}l{\`e}ne},
   title={On Witt vector cohomology for singular varieties},
   journal={Compos. Math.},
   volume={143},
   date={2007},
   number={2},
   pages={363--392},
}

\bib{Bir16}{article}{
   author={Birkar, Caucher},
   title={Existence of flips and minimal models for 3-folds in char $p$},
   language={English, with English and French summaries},
   journal={Ann. Sci. \'Ec. Norm. Sup\'er. (4)},
   volume={49},
   date={2016},
   number={1},
   pages={169--212},
}

\bib{BCHM10}{article}{
   author={Birkar, Caucher},
   author={Cascini, Paolo},
   author={Hacon, Christopher D.},
   author={McKernan, James},
   title={Existence of minimal models for varieties of log general type},
   journal={J. Amer. Math. Soc.},
   volume={23},
   date={2010},
   number={2},
   pages={405--468},
}

\bib{BW17}{article}{
   author={Birkar, Caucher},
   author={Waldron, Joe},
   title={Existence of Mori fibre spaces for 3-folds in ${\rm char}\,p$},
   journal={Adv. Math.},
   volume={313},
   date={2017},
   pages={62--101},
}

\bib{CR12}{article}{
   author={Chatzistamatiou, Andre},
   author={R{\"u}lling, Kay},
   title={Hodge-Witt cohomology and Witt-rational singularities},
   journal={Doc. Math.},
   volume={17},
   date={2012},
   pages={663--781},
}

\bib{CP08}{article}{
   author={Cossart, Vincent},
   author={Piltant, Olivier},
   title={Resolution of singularities of threefolds in positive
   characteristic. I. Reduction to local uniformization on Artin-Schreier
   and purely inseparable coverings},
   journal={J. Algebra},
   volume={320},
   date={2008},
   number={3},
   pages={1051--1082},
}

\bib{DH16}{article}{
   author={Das, Omprokash},
   author={Hacon, Christopher D.},
   title={On the adjunction formula for 3-folds in characteristic $p>5$},
   journal={Math. Z.},
   volume={284},
   date={2016},
   number={1-2},
   pages={255--269},
}

\bib{dFKX17}{article}{
   author={de Fernex, Tommaso},
   author={Koll\'ar, J\'anos},
   author={Xu, Chenyang},
   title={The dual complex of singularities},
   conference={
      title={Higher dimensional algebraic geometry---in honour of Professor
      Yujiro Kawamata's sixtieth birthday},
   },
   book={
      series={Adv. Stud. Pure Math.},
      volume={74},
      publisher={Math. Soc. Japan, Tokyo},
   },
   date={2017},
   pages={103--129},
}

\bib{Esn03}{article}{
   author={Esnault, H{\'e}l{\`e}ne},
   title={Varieties over a finite field with trivial Chow group of 0-cycles
   have a rational point},
   journal={Invent. Math.},
   volume={151},
   date={2003},
   number={1},
   pages={187--191},
}

\bib{Fuj07}{article}{
   author={Fujino, Osamu},
   title={What is log terminal?},
   conference={
      title={Flips for 3-folds and 4-folds},
   },
   book={
      series={Oxford Lecture Ser. Math. Appl.},
      volume={35},
      publisher={Oxford Univ. Press, Oxford},
   },
   date={2007},
   pages={49--62},
}

\bib{Fuj11}{article}{
   author={Fujino, Osamu},
   title={Fundamental theorems for the log minimal model program},
   journal={Publ. Res. Inst. Math. Sci.},
   volume={47},
   date={2011},
   number={3},
   pages={727--789},
}

\bib{Fuj17}{book}{
   author={Fujino, Osamu},
   title={Foundations of the minimal model program},
   series={MSJ Memoirs},
   volume={35},
   publisher={Mathematical Society of Japan},
   date={2017},
}

\bib{Geo08}{book}{
   author={Geoghegan, Ross},
   title={Topological methods in group theory},
   series={Graduate Texts in Mathematics},
   volume={243},
   publisher={Springer, New York},
   date={2008},
}

\bib{GNT}{article}{
   author={Gongyo, Yoshinori},
   author={Nakamura, Yusuke},
   author={Tanaka, Hiromu},
   title={Rational points on log Fano threefolds over a finite field},
   journal={to appear in J. Eur. Math. Soc.},
   eprint={arXiv:1512.05003v3},
}

\bib{HX15}{article}{
   author={Hacon, Christopher D.},
   author={Xu, Chenyang},
   title={On the three dimensional minimal model program in positive
   characteristic},
   journal={J. Amer. Math. Soc.},
   volume={28},
   date={2015},
   number={3},
   pages={711--744},
}

\bib{Har77}{book}{
   author={Hartshorne, Robin},
   title={Algebraic geometry},
   note={Graduate Texts in Mathematics, No. 52},
   publisher={Springer-Verlag, New York-Heidelberg},
   date={1977},
}

\bib{HNT}{article}{
   author={Hashizume, Kenta},
   author={Nakamura, Yusuke},
   author={Tanaka, Hiromu},
   title={Minimal model program for log canonical threefolds in positive characteristic},
   journal={to appear in Math. Res. Lett.}
   eprint={arXiv:1711.10706v1},
}

\bib{Hat02}{book}{
   author={Hatcher, Allen},
   title={Algebraic topology},
   publisher={Cambridge University Press, Cambridge},
   date={2002},
}

\bib{Kat71}{article}{
   author={Katz, Nicholas M.},
   title={On a theorem of Ax},
   journal={Amer. J. Math.},
   volume={93},
   date={1971},
   pages={485--499},
}

\bib{Kol13}{book}{
   author={Koll{\'a}r, J{\'a}nos},
   title={Singularities of the minimal model program},
   series={Cambridge Tracts in Mathematics},
   volume={200},
   note={With a collaboration of S\'andor Kov\'acs},
   publisher={Cambridge University Press, Cambridge},
   date={2013},
}

\bib{KM98}{book}{
   author={Koll{\'a}r, J{\'a}nos},
   author={Mori, Shigefumi},
   title={Birational geometry of algebraic varieties},
   series={Cambridge Tracts in Mathematics},
   volume={134},
   publisher={Cambridge University Press, Cambridge},
   date={1998},
}

\bib{KX16}{article}{
   author={Koll\'{a}r, J\'{a}nos},
   author={Xu, Chenyang},
   title={The dual complex of Calabi-Yau pairs},
   journal={Invent. Math.},
   volume={205},
   date={2016},
   number={3},
   pages={527--557},
}

\bib{Mau}{article}{
   author={Mauri, Mirko},
   title={The dual complex of log Calabi-Yau pairs on Mori fibre spaces},

   eprint={arXiv:1808.03706v1},
}

\bib{NT}{article}{
   author={Nakamura, Yusuke},
   author={Tanaka, Hiromu},
   title={A Witt Nadel vanishing theorem for threefolds},

   eprint={arXiv:1712.07358v1},
}

\bib{Sza94}{article}{
   author={Szab\'{o}, Endre},
   title={Divisorial log terminal singularities},
   journal={J. Math. Sci. Univ. Tokyo},
   volume={1},
   date={1994},
   number={3},
   pages={631--639},
}

\bib{Tan14}{article}{
   author={Tanaka, Hiromu},
   title={Minimal models and abundance for positive characteristic log
   surfaces},
   journal={Nagoya Math. J.},
   volume={216},
   date={2014},
   pages={1--70},
}

\bib{Wal18}{article}{
   author={Waldron, Joe},
   title={The LMMP for log canonical 3-folds in characteristic $p>5$},
   journal={Nagoya Math. J.},
   volume={230},
   date={2018},
   pages={48--71},
}

\end{biblist*}
\end{bibdiv}

\end{document}